\documentclass[11pt]{amsart}
\usepackage[margin=1in]{geometry}
\usepackage{amsmath,amssymb,amsthm,mathtools}
\usepackage{enumitem}
\usepackage{hyperref}
\usepackage{microtype}
\usepackage{xcolor}

\usepackage{esint}

\hypersetup{colorlinks=true, linkcolor=blue!60!black, citecolor=blue!60!black, urlcolor=blue!60!black}

\newtheorem{theorem}{Theorem}[section]
\newtheorem{lemma}[theorem]{Lemma}
\newtheorem{proposition}[theorem]{Proposition}
\newtheorem{corollary}[theorem]{Corollary}
\newtheorem{remark}[theorem]{Remark}
\numberwithin{equation}{section}

\newcommand{\R}{\mathbb R}
\newcommand{\dd}{\,d}
\newcommand{\diam}{\operatorname{diam}}
\newcommand{\osc}{\operatorname{osc}}
\newcommand{\dist}{\operatorname{dist}}

\newcommand{\eps}{\varepsilon}
\newcommand{\Om}{\Omega}

\newcommand{\pnu}{\partial_\nu}

\title[Serrin's Problem under Dirichlet Perturbations]{Serrin's Problem under Dirichlet Perturbations: Geometric Compactness and Sharp Planar Stability}

\author{Qinfeng Li}
\address{School of Mathematics, Hunan University, Changsha, P.R. China.}
\email{liqinfeng1989@gmail.com}

\author{Weihong Xie}
\address{Department of Mathematics, Central South University, Changsha, P.R. China.}
\email{wh.xie@csu.edu.cn}

\author{Hang Yang}
\address{School of Mathematics, Hunan University, Changsha, P.R. China.}
\email{hangyang0925@gmail.com}

\date{}

\begin{document}

	\begin{abstract}
		Let $\Omega\subset\mathbb R^n$ be a bounded connected Lipschitz domain and let $u_\Omega$ be the H\"older continuous representative of the solution of the compatible constant-flux Neumann problem
		\[
		-\Delta u_\Omega=1\quad\hbox{in }\Omega,
		\qquad
		\partial_\nu u_\Omega=-\frac{|\Omega|}{P(\Omega)}\quad\hbox{on }\partial\Omega,
		\qquad
		\int_{\partial\Omega}u_\Omega\,d\sigma=0.
		\]
		We measure the Dirichlet oscillation by
		\[
		O(\Omega):=\max_{\partial \Omega}u_\Omega-\min_{\partial \Omega} u_\Omega.
		\]
		In our earlier work \cite{LiXieYang2023}, we posed a stability question for Serrin's overdetermined problem under the Dirichlet oscillation and proved that the answer is negative in dimensions $n\ge3$. The present paper resolves the question in the planar convex class and develops a sharp quantitative theory without any a priori geometric nondegeneracy.		We show that convexity is essential for the stability results in dimension two by constructing fixed-area annuli with $O(\Omega_k)\to0$ that remain far from every disk. By contrast, if $\Omega_k\subset\mathbb R^2$ are convex, $|\Omega_k|=\pi$, and $O(\Omega_k)\to0$, then, up to translations, $\Omega_k$ converges in the Hausdorff topology to the unit disk. More quantitatively, there exist universal constants $C>1$ and $\delta_0>0$ such that any bounded convex domain $\Omega\subset\mathbb R^2$ with $|\Omega|=\pi$ and $O(\Omega)\le\delta_0$ satisfies the sharp linear estimate
		\[
		R_\Omega-r_\Omega+\inf_{z\in\mathbb R^2}d_H(\Omega,B_1(z))
		\le C\,O(\Omega),
		\]
		where $R_{\Omega}$ and $r_{\Omega}$ are the circumradius and inradius of $\Omega$, respectively. The linear order is shown to be optimal. No a priori diameter, curvature, or smoothness assumption is imposed. A central step is to prove that small boundary oscillation itself excludes long-thin degeneration and yields geometric compactness. The qualitative argument then uses the rough-domain Serrin rigidity theorem of Figalli--Zhang. The quantitative proof combines a boundary $P$-function argument yielding a new tangential-gradient estimate, a uniform linear boundary-growth estimate for the Neumann torsion function, the reverse-Serrin identity of Magnanini--Molinarolo--Poggesi, and a two-dimensional weighted oscillation estimate. We also study the strictly weaker one-sided averaged deficit, 
		\[
		A(\Omega):=\frac1{P(\Omega)}\int_{\partial\Omega}u_\Omega\,d\sigma-
		\min_{\partial\Omega}u_\Omega.
		\]
		In the planar convex class, $A(\Omega_k)\to0$ still forces convergence to a disk; quantitatively,
		\[
		R_\Omega-r_\Omega+\inf_{z\in \mathbb{R}^{2}} d_H(\Omega,B_1(z))
		\le C A(\Omega)^{2/3}
		\]
		for $|\Omega|=\pi$ and sufficiently small $A(\Omega)$.
	\end{abstract}
	
	\date{}
	
	\maketitle
	\tableofcontents

	\section{Introduction}
	Serrin's celebrated overdetermined problem asserts that if a bounded
	\(C^{2}\) domain \(\Omega \subset \mathbb{R}^{n}\) admits a solution to
	\begin{equation}\label{eq:Serrin}
		\begin{cases}
			-\Delta v = 1 & \text{in } \Omega,\\
			v = 0 & \text{on } \partial\Omega,\\
			\partial_{\nu} v = c & \text{on } \partial\Omega,
		\end{cases}
	\end{equation}
	where \(c\) is a constant, then \(\Omega\) must be a ball.
	Serrin's original proof relies on the method of moving
	planes~\cite{Serrin1971}. Shortly thereafter, Weinberger provided an
	alternative proof based on a \(P\)-function identity~\cite{Weinberger1971}.
	Since these seminal contributions, numerous alternative proofs and extensions
	to both linear and nonlinear operators have been developed. For a
	non-exhaustive selection of related works, we refer
	to~\cite{BNST08,CH98,PS89,GL89,FK08,FGK06,BK11,CS09}; see also the
	surveys~\cite{NT18,Schaefer01,FG08,FigalliZhang2025,MagnaniniSurvey2017} and the references
	therein.
	
	Extensions of Serrin's theorem to nonsmooth domains have been investigated
	since the 1990s. In 1992, Vogel~\cite{Vogel92} proved that if a \(C^{1}\)
	domain \(\Omega\) admits a solution to~\eqref{eq:Serrin}, then \(\Omega\)
	is necessarily of class \(C^{2}\), and hence Serrin's theorem applies.
	Subsequently, Prajapat~\cite{Prajapat} answered a question posed by
	Berestycki by establishing Serrin's conclusion when \(\Omega\) is of class
	\(C^{2}\) except possibly at a corner and the solution is \(C^{2}\) away
	from that corner. A major recent advance in this direction is due to Figalli
	and Zhang~\cite{FigalliZhang2025}. They proved that Serrin's theorem remains
	valid for sets of finite perimeter satisfying a uniform upper density bound;
	their framework also permits slit-type discontinuities. In particular, their
	result encompasses the case of Lipschitz domains, answering an open question posed in \cite{HuangLiLi2024}.  Alternative proofs for Serrin system on
	Lipschitz domains were given in~\cite{DongZhang2025} and \cite{DR}.

	The present paper continues a different stability problem, explicitly formulated as Question~1 in our earlier work~\cite{LiXieYang2023}. There the Neumann datum is kept constant and the Dirichlet trace is allowed to oscillate: we asked whether
	\[
	-\Delta u=1\quad\text{in }\Omega,
	\qquad
	\partial_\nu u=-\frac{|\Omega|}{P(\Omega)}\quad\text{on }\partial\Omega,
	\qquad
	\osc_{\partial\Omega}u:=\max_{\partial \Omega}u-\min_{\partial \Omega}u \to0,
	\]
	forces $\Omega$ to approach a ball. We proved in~\cite{LiXieYang2023} that the answer is negative in each dimension $n\ge3$, by means of fixed-volume rectangular examples, and left the two-dimensional case open. The present paper resolves that question in the planar convex class. We prove qualitative and sharp quantitative stability in this setting, show that convexity is essential, and treat a strictly weaker one-sided averaged boundary deficit. A detailed comparison with the reverse and classical Serrin stability theories is introduced after the statements of the main results.
	
	Let \(\Omega\subset\R^n\) be a bounded connected Lipschitz domain with perimeter \(P(\Omega)\).  We denote by \(u_\Omega\) the weak solution, unique up to constants, of
	\begin{equation}\label{eq:neumann-torsion}
		\begin{cases}
			-\Delta u_\Omega=1 & \text{in }\Omega,\\
			\partial_\nu u_\Omega=-\dfrac{|\Omega|}{P(\Omega)} & \text{on }\partial\Omega,
		\end{cases}
	\end{equation}
	normalized by
	\[
	\int_{\partial\Omega}u_\Omega\,d\sigma=0.
	\]
	The compatibility condition is automatic due to divergence theorem, and the weak formulation of \eqref{eq:neumann-torsion} is
	\begin{equation}\label{eq:weak-form}
		\int_\Omega \nabla u_\Omega\cdot\nabla\varphi\,dx
		=
		\int_\Omega \varphi\,dx
		-\frac{|\Omega|}{P(\Omega)}\int_{\partial\Omega}\varphi\,d\sigma
	\end{equation}
	for any $\varphi\in H^1(\Omega)$. Any weak solution to \eqref{eq:neumann-torsion} is often called \textit{the Neumann torsion function}.
	
	By the global H\"older regularity for this Neumann problem on Lipschitz domains, as obtained through the Kenig--Pipher argument \cite{KenigPipher1993} recalled in Remark~2.3 of Huang--Li--Li \cite{HuangLiLi2024}, \(u_\Omega\) admits a H\"older continuous representative on \(\overline\Omega\).  We always use this representative and define
	\begin{equation}\label{eq:deficit}
		O(\Omega):=\osc_{\partial\Omega}u_\Omega
		=\sup_{\partial\Omega}u_\Omega-\inf_{\partial\Omega}u_\Omega.
	\end{equation}
	This quantity is independent of the additive constant. In particular, if \(O(\Omega)=0\), then \(u_\Omega\) is constant on the boundary. After subtracting this constant, one obtains a solution of Serrin's overdetermined system. Thus \(O(\Omega)\) naturally measures the nonconstancy of the Dirichlet trace for Serrin's problem, and it is precisely the oscillation  considered in ~\cite[Question~1]{LiXieYang2023}.

	We first state the instability result. Part~\textup{(i)} below was proved in~\cite{LiXieYang2023} and is included for completeness; part~\textup{(ii)} is the new planar non-convex counterexample. Together they show that both dimension and convexity are essential.
	
	\begin{theorem}\label{thm:dimension-convexity}
		Let \(O(\Omega)\) be defined by \eqref{eq:deficit}.
		\begin{enumerate}[label=\textup{(\roman*)}]
			\item If \(n\ge3\), there exists a sequence of fixed-volume rectangles \(\Omega_k\subset\R^n\) such that
			\[
			O(\Omega_k)\to0,
			\]
			and, after any translations and rotations, the Hausdorff distance from \(\Omega_k\) to every ball of the same volume tends to infinity.
			
			\item If $n=2$, there exists a sequence of fixed-area annuli $A_{k}$ such that 
			\[O(A_{k})\to 0,\]
			and, after any translations and rotations, the Hausdorff distance from $A_{k}$ to every ball of the same volume tends to infinity.
		\end{enumerate}
	\end{theorem}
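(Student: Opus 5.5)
For part (ii) I would use the explicit radial solution on a thin annulus and check the oscillation directly. For part (i), recorded from \cite{LiXieYang2023}, I would reproduce the argument with an explicit quadratic solution on flat boxes.

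\emph{Part (ii).} Let $A=\{a<|x|<b\}$ with $\pi(b^2-a^2)=\pi$, so that $t:=b-a=1/(a+b)$, and let $a\to\infty$. Then
\[
\lambda:=\frac{|A|}{P(A)}=\frac{\pi(b^2-a^2)}{2\pi(a+b)}=\frac{t}{2}.
\]
I look for a radial solution $u(r)=-r^2/4+c\log r+d$, which satisfies $-\Delta u=1$. The outward normal derivative is $u'(b)$ at $r=b$ and $-u'(a)$ at $r=a$. The condition at $r=b$ is $-b/2+c/b=-t/2$, which gives $c=ab/2$. With this $c$ the inner condition $a/2-c/a=-t/2$ holds automatically. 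The constant $d$ is then fixed by the normalization $\int_{\partial A}u\,d\sigma=0$. Since $u$ is constant on each boundary circle,
\[
O(A)=|u(b)-u(a)|=\Bigl|-\tfrac{b^2-a^2}{4}+\tfrac{ab}{2}\log\tfrac{b}{a}\Bigr|.
\]
I would expand $\log(1+t/a)$ to third order. The terms $at/2$ and $t^2/4$ cancel against $(b^2-a^2)/4=(2at+t^2)/4$, leaving
\[
O(A)=\frac{t^3}{12a}+O\!\left(\frac{t^4}{a^2}\right)\sim\frac{1}{96\,a^4}\to0 .
\]
The oscillation thus vanishes, while $A_k$ is contained in no disk of radius less than $b>a$. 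Hence for every $z$ and every rotation, $d_H(A_k,B_1(z))\ge a-1\to\infty$.

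\emph{Part (i).} Take a box $\Omega=\prod_i(-L_i/2,L_i/2)$ and the ansatz $u=-\sum_i \alpha_i x_i^2/2$ with $\sum_i\alpha_i=1$. The Neumann condition on the face $x_i=\pm L_i/2$ reads $\alpha_iL_i/2=\lambda$, so $\alpha_i=2\lambda/L_i$. Summing, $\sum_i\alpha_i=1$ becomes $\lambda=\bigl(2\sum_i 1/L_i\bigr)^{-1}$. This agrees with $|\Omega|/P(\Omega)$, because $P(\Omega)=2|\Omega|\sum_i 1/L_i$, so $u$ is the Neumann torsion function up to a constant.

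On $\partial\Omega$ the minimum of $u$ is attained at a corner, with value $-\lambda\sum_i L_i/4$. The maximum is attained at the center of the face with smallest $L_i$, with value $-\lambda\min_i L_i/4$. Hence
\[
O(\Omega)=\frac{\lambda}{4}\Bigl(\sum_i L_i-\min_i L_i\Bigr).
\]
Now choose flat plates: $L_1=\eps$ and $L_2=\dots=L_n=L$ with $\eps L^{n-1}=|B_1|$. Then $\lambda\approx\eps/2$ and $O\approx(n-1)\eps L/8\sim L^{2-n}\to0$ for $n\ge3$. Meanwhile the diameter is at least $L\to\infty$, so the Hausdorff distance to any ball of the same volume diverges.

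\emph{Main obstacle.} The only delicate point is the expansion in (ii). The leading terms of order $at$ and $t^2$ cancel exactly, so the expansion of $\log(1+t/a)$ must be carried to third order to see that the remainder decays. Everything else is an explicit computation.
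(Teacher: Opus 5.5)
Your proposal is correct and follows essentially the same route as the paper: flat boxes with the explicit quadratic Neumann torsion function for (i), and fixed-area concentric annuli with the radial solution $-r^2/4+\tfrac{ab}{2}\log r+d$ for (ii). The only differences are in detail. You compute the box oscillation exactly rather than bounding it, and your third-order expansion $O(A)=t^3/(12a)+O(t^4/a^2)$ makes explicit why $O(A_R)\to0$, which the paper only asserts from the closed-form expression.
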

	
	On the other hand, we prove the following stability result.
	\begin{theorem}\label{thm:disk-conv}
		Let $\Omega_k\subset\R^2$ be bounded convex domains satisfying
		\[
		|\Omega_k|=\pi,
		\qquad
		O(\Omega_k)\to0.
		\]
		Then
		\[
		\inf_{z\in\R^2}d_H(\Omega_k,B_1(z))\to0.
		\]
		Equivalently, after suitable translations, $\Omega_k$ converges in the Hausdorff topology to the unit disk.
	\end{theorem}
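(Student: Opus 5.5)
The plan has two stages. First, small oscillation should give a uniform diameter bound, which excludes long-thin degeneration. Then Blaschke selection, continuity of $u_\Omega$ under Hausdorff convergence of convex domains, and the Figalli--Zhang rough-domain Serrin theorem should identify the limit as a disk. We argue by contradiction: suppose a subsequence satisfies $\inf_z d_H(\Omega_k,B_1(z))\ge\eps_0>0$.

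\emph{Step 1 (no long-thin degeneration).} Suppose $\diam\Omega_k\to\infty$ along a subsequence. By John's lemma, after a rigid motion $\Omega_k$ lies between rectangles $[-L_k,L_k]\times[-h_k,h_k]$ scaled by $1/2$ and $2$, with $L_kh_k\sim 1$ and $L_k\to\infty$. Then $P(\Omega_k)\sim L_k$, so the Neumann flux is $-|\Omega_k|/P(\Omega_k)\sim -1/L_k$.

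To obtain a contradiction we test the weak formulation \eqref{eq:weak-form} with $\varphi$ depending only on $x_1$, cut off to the portion of $\Omega_k$ near one end. A Neumann-torsion comparison for thin convex sets shows that $u$ is approximately a one-dimensional profile $g(x_1)$, where $g$ solves
\[
-(w g')'=w-\frac{\pi}{P}\,\ell,
\]
with $w(x_1)$ the width of the cross-section and $\ell(x_1)$ the boundary length density. Since $\ell\approx 2$ while $w$ varies, $g$ oscillates by an amount of order $L_k\cdot h_k\cdot(\dots)$, which is bounded below.

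A cleaner way to get the lower bound is to integrate the equation over the subdomain $\{x_1<t\}$. This gives the flux identity
\[
\int_{\{x_1=t\}\cap\Omega}\partial_1 u = |\{x_1<t\}\cap\Omega|-\frac{\pi}{P}\,\mathcal H^1\bigl(\partial\Omega\cap\{x_1<t\}\bigr).
\]
For $t$ near an endpoint, the area fraction is below the perimeter fraction: near a tip the area grows like $t\,w(t)$ while the boundary length grows like $t$. This yields a definite-sign average of $\partial_1u$ on slices.

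Averaging this over $t\in[-L_k,-L_k/2]$ and comparing boundary values, using interior gradient control and the fact that slices are short, gives $\osc_{\partial\Omega_k}u\ge c>0$. This contradicts $O(\Omega_k)\to0$. I expect this quantitative exclusion of thin degeneration, uniform over all convex shapes and including the case where both ends are blunt, to be the main obstacle. In the blunt case I would instead compare slices in the middle against the ends, using the one-dimensional identity.

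\emph{Step 2 (compactness).} With $\diam\Omega_k\le D$ and $|\Omega_k|=\pi$, the convex sets have inradius bounded below. By Blaschke selection, after translation $\Omega_k\to\Omega_\infty$ in Hausdorff distance, where $\Omega_\infty$ is convex with $|\Omega_\infty|=\pi$. For convex sets, perimeter is continuous under this convergence, and the domains are uniformly Lipschitz. Uniform Hölder bounds (Kenig--Pipher with uniform Lipschitz character), together with $H^1$ bounds from the Poincaré--trace inequality, let us pass to the limit in \eqref{eq:weak-form}, pulling back by bi-Lipschitz maps to a fixed disk. We obtain $u_{\Omega_k}\to u_{\Omega_\infty}$ uniformly on boundaries in the transported sense, so $O(\Omega_\infty)=0$.

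\emph{Step 3 (rigidity).} Subtracting the constant, $u_{\Omega_\infty}$ solves Serrin's system on a Lipschitz domain. By Figalli--Zhang \cite{FigalliZhang2025}, $\Omega_\infty$ is a disk of area $\pi$, i.e.\ $B_1(z)$. This contradicts the assumption $d_H\ge\eps_0$ and proves the theorem.
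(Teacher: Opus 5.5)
Your Steps~2 and~3 match the paper. The paper uses the Geng--Shen $W^{1,p}$ bound with $p>2$, uniform extensions and Morrey's embedding rather than Kenig--Pipher plus a bi-Lipschitz pullback, but either route works. The target of Step~1, namely $\osc_{\partial\Omega_k}u\ge c>0$ for long thin $\Omega_k$ of area $\pi$, is also the right one.

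The genuine gap is that Step~1, the main difficulty of the theorem, is not proved, and the mechanism you sketch fails in the critical regime. Rescale to $\diam K=1$ and $|K|=a\to0$, with diametral endpoints $(0,0)$, $(1,0)$ and concave width $w=p+q$, and let $v$ be the Neumann torsion function of $K$. With the correct sign (yours is flipped), the slice flux is
\[
F(t):=\int_{K\cap\{x_1=t\}}\partial_1v\,dx_2=-|K\cap\{x_1<t\}|+\frac{a}{P(K)}\,\mathcal H^1(\partial K\cap\{x_1<t\}),
\]
a difference of two quantities of size $O(a)$.

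Your sign claim holds only near a genuine tip. At an end near which $w$ peaks (e.g.\ the left end of the triangle with vertices $(0,0)$, $(\sqrt a,2a)$, $(1,0)$), the area fraction exceeds the perimeter fraction. A very short tip contributes only lower order.

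More seriously, for near-rectangular shapes the two fractions agree to leading order and $F=O(a^2)$. Its sign is then decided by a competition between two $O(a)$ effects: the perimeter excess carried by the short ends, and the deviation of $w$ from a constant. This is exactly the extremal regime: for $K=[0,1]\times[0,a]$ one has $\osc_{\partial K}v=a/(8(1+a))$, i.e.\ $O\approx\pi/8$ at area $\pi$. What is needed is a uniform statement that these two effects cannot cancel. Neither the one-dimensional profile heuristic (where you attribute the oscillation to variation of $w$) nor ``comparing middle slices with the ends'' supplies it.

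The paper handles this with one test function and no case distinction (Lemma~\ref{lem:thin-lower} and Proposition~\ref{prop:gap}).
\begin{enumerate}
\item If $\osc_{\partial K}v=o(a)$, comparison with the Dirichlet torsion function of a strip of width $O(a)$ gives $\osc_K v=o(a)$, so no gradient bounds are needed.
\item Test the weak formulation with $\psi=x_1(1-x_1)$. This is your flux identity integrated against $\psi'$, since $\int_K\nabla v\cdot\nabla\psi\,dx=\int_0^1\psi'(t)F(t)\,dt$. Using $\int_{\partial K}|\nu\cdot e_1|\,d\sigma\le Ca$, one gets $\frac1{|K|}\int_K\psi-\frac1{P(K)}\int_{\partial K}\psi=o(a)$.
\item The key lemma missing from your proposal is that this moment gap is at least $c_1a$ for every thin convex $K$. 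Write $w/a$ as a superposition of tent functions with a nonnegative measure $\beta$ of total mass $2$. Then $\frac1{|K|}\int_K\psi=\frac16+\frac1{12}\int_0^1t(1-t)\,d\beta(t)$, while $\int_{\partial K}\psi\,d\sigma\le\frac13+Ca^2$.
\item Either $\int_0^1t(1-t)\,d\beta\ge Ma$ and the bulk term wins, or most of $\beta$ lies within distance $O(a)$ of the endpoints. In the second case $P(K)\ge2+ca$, which lowers the boundary average by a multiple of $a$.
\end{enumerate}
This dichotomy is the quantitative non-cancellation your Step~1 lacks. Rescaling then yields $O(\Omega_k)\ge c_0\pi$ for thin $\Omega_k$, which is what your Step~2 needs.
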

	
	The main difficulty in Theorem~\ref{thm:disk-conv} is to derive geometric precompactness from the oscillation itself. After normalizing the diameter, a carefully chosen test function and boundary layer estimates yields a quantitative gap between its bulk and boundary averages on thin convex domains; see Lemma~\ref{lem:thin-lower} and Section~\ref{sec:diameter-compactness}. This rules out long-thin degeneration as in the example in Theorem \ref{thm:dimension-convexity} (i), and yields  a uniform diameter bound at fixed area.
	
	The diameter bound, together with the area normalization, ensures a uniform positive lower bound for the inradii. The resulting uniform Lipschitz geometry and the \(W^{1,p}\) theory of Geng--Shen~\cite{GengShen} provide compactness of suitably normalized Neumann torsion functions. The limit solves Serrin's overdetermined problem only in a weak form on a Lipschitz convex domain, so the rough-domain rigidity theorem of Figalli--Zhang~\cite{FigalliZhang2025} is precisely the result needed to identify it as a disk. To the best of our knowledge, this is the first use of their rough-domain theorem as the rigidity endpoint of a compactness argument for stability. See Section~\ref{sec:qualitative-stability} for details.
	
	Beyond this qualitative convergence, we obtain a sharp quantitative estimate without imposing any a priori nondegeneracy assumption. For a bounded convex domain $\Omega\subset\R^2$, let
	\[
	r_\Omega:=\sup\{r>0:\text{there exists }x\in\Omega\text{ with }B_r(x)\subset\Omega\}
	\]
	and
	\[
	R_\Omega:=\inf\{R>0:\text{there exists }x\in\R^2\text{ with }\Omega\subset B_R(x)\}
	\]
	denote its inradius and circumradius, respectively.
	
	\begin{theorem}\label{thm:intro-quantitative}
		There exist universal constants $C>1$ and $\delta_0>0$ such that any bounded convex domain $\Omega\subset\R^2$ with
		\[
		|\Omega|=\pi,
		\qquad
		O(\Omega)\le\delta_0,
		\]
		satisfies
		\begin{equation}\label{eq:intro-sharp-estimate}
			R_\Omega-r_\Omega
			+\inf_{z\in\R^2}d_H(\Omega,B_1(z))
			\le C\,O(\Omega).
		\end{equation}
		The linear order is optimal. More precisely, there exists a family of smooth,
		strictly convex, centrally symmetric domains $\Omega_t$, normalized by $|\Omega_t|=\pi$, such that
		\[
		O(\Omega_t)=\frac{|t|}{2}+o(|t|),
		\qquad
		R_{\Omega_t}-r_{\Omega_t}=2|t|+o(|t|).
		\]
	\end{theorem}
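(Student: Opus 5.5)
We will prove Theorem~\ref{thm:intro-quantitative} in two parts: the linear estimate \eqref{eq:intro-sharp-estimate}, and then an explicit family showing the linear order is optimal.

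\textbf{Step 1: geometric nondegeneracy.} Fix $\delta_0$ small. By Theorem~\ref{thm:disk-conv} and a contradiction argument, every convex $\Omega$ with $|\Omega|=\pi$ and $O(\Omega)\le\delta_0$ is Hausdorff-close to a unit disk. In particular, we may assume $\diam\Omega\le 3$ and $r_\Omega\ge 1/2$. This gives uniform Lipschitz character, so the Geng--Shen $W^{1,p}$ theory and the boundary H\"older theory hold with universal constants.

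\textbf{Step 2: integral identity.} The engine is the reverse-Serrin identity of Magnanini--Molinarolo--Poggesi, applied to $u=u_\Omega$. Let $c=|\Omega|/P(\Omega)$ and $q=\tfrac14|x-z|^2$ for a centre $z$ to be chosen. Integrating $|D^2u-\tfrac12 I|^2$ (that is, the Weinberger $P$-function $P=|\nabla u|^2+u$ argument carried to the boundary) gives a formula of the form
\[
\int_\Omega |D^2 u-\tfrac12 I|^2\,dx=\int_{\partial\Omega}\bigl(u-\bar u\bigr)\,\bigl(\text{boundary terms in }\partial_\nu u,\ |\nabla_T u|,\ \partial_\nu q\bigr)\,d\sigma ,
\]
where $\bar u$ is the boundary mean of $u$ (which vanishes under our normalization). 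The right-hand side is bounded by $O(\Omega)$ times an $L^1$ norm of boundary quantities. Two estimates make that norm universally bounded:
\begin{itemize}[label={}]
\item a tangential-gradient bound $|\nabla_T u|\lesssim \sqrt{O(\Omega)}$, obtained from a boundary $P$-function maximum principle, since $P$ attains its maximum on $\partial\Omega$ and there $\partial_\nu u=-c$;
\item a uniform linear boundary-growth bound $|u(x)-u(y)|\lesssim |x-y|$ near $\partial\Omega$, which controls the boundary terms in $L^\infty$.
\end{itemize}
Together these give $\|D^2u-\tfrac12 I\|_{L^2(\Omega)}^2\lesssim O(\Omega)$. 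With $z$ chosen appropriately, this yields an $L^2$ bound $\|h\|_{L^2}^2\lesssim O(\Omega)$ for the harmonic function $h=u+q$ (up to normalizing constants).

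\textbf{Step 3: linear control.} The bound of Step 2 is only of order $O(\Omega)^{1/2}$; reaching the linear order is the main obstacle. The plan is to use the two-dimensional weighted oscillation estimate. On $\partial\Omega$, $u$ has oscillation at most $O(\Omega)$. Hence $q$ restricted to $\partial\Omega$ equals $h$ up to an error of size $O(\Omega)$. Parametrize $\partial\Omega$ in polar coordinates about $z$ as $\rho(\theta)$, so that $q=\rho^2/4$ on the boundary. The normal derivative of $h$ is $\partial_\nu h=-c+\partial_\nu q$, and $\partial_\nu q$ is expressed through $\rho$ and $\rho'$. Compare this with the Dirichlet-to-Neumann map of the near-disk domain acting on $h|_{\partial\Omega}$. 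The first Fourier modes are removed by choosing $z$ (translation) and using the area normalization. The higher modes then satisfy a coercive relation, because the Dirichlet-to-Neumann map applied to mode $k$ produces a factor $|k|$ while the $q$-contribution produces a factor $1$. This gives the weighted estimate
\[
\|\rho-1\|_{L^\infty}\lesssim \|\rho-1\|_{H^{1/2}\text{-type}}+O(\Omega)\lesssim O(\Omega).
\]
The $L^\infty$ bound here comes from convexity, which bounds $\rho'$, together with one-dimensional interpolation. Absorbing the nonlinear terms uses the smallness from Step 1. Finally, $R_\Omega-r_\Omega\le 2\|\rho-1\|_\infty$ and $d_H(\Omega,B_1(z))\le\|\rho-1\|_\infty$, which gives \eqref{eq:intro-sharp-estimate}.

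\textbf{Step 4: sharpness.} Take $\rho_t(\theta)=1+t\cos 2\theta$, rescaled so that $|\Omega_t|=\pi$. Then $R_{\Omega_t}-r_{\Omega_t}=2|t|+o(|t|)$. Use shape-derivative expansion. At first order, $u$ on $\Omega_t$ equals $\tfrac14(1-|x|^2)+t\,a r^2\cos 2\theta+O(t^2)$, where $a$ is determined by the Neumann condition: the normal derivative of $-\tfrac14|x|^2$ changes by a term proportional to $t\rho''$-free quantities, which must be balanced by the harmonic correction. Restricting to $\partial\Omega_t$ gives a boundary trace of the form $\tfrac{t}{2}\cos 2\theta\cdot\kappa+O(t^2)$. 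Computing the constant $\kappa$ produces oscillation $|t|/2+o(|t|)$, matching the claimed constants.
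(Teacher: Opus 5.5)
Your Step~3 is where the linear order is decided, and it does not go through. You assert $\|\rho-1\|_{L^\infty}\lesssim O(\Omega)$ from a linearized Fourier/Dirichlet-to-Neumann heuristic. On a merely convex near-disk domain (only Lipschitz, with curvature a measure), you never control the error in replacing the Dirichlet-to-Neumann map of $\Omega$ by that of the disk, and that is exactly the difficulty.

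The route you name to $L^\infty$ also cannot give a linear rate. Write $\varepsilon=2O(\Omega)$.
\begin{itemize}
\item The boundary error $g=u_\Omega|_{\partial\Omega}-\mathrm{const}$ is small only in $L^\infty$. The tangential $P$-function bound gives merely $\|\partial_\tau g\|_{L^\infty}\lesssim\varepsilon^{1/2}$, so only $\|g\|_{\dot H^{1/2}}\lesssim\varepsilon^{3/4}$.
\item An $L^2$ bound on $\rho-1$, interpolated against the convexity bound on $\rho'$, yields only $\|\rho-1\|_{L^\infty}\lesssim\varepsilon^{2/3}$.
\item An $H^{1/2}$ bound, combined with the Lipschitz and semiconcavity information from convexity, controls $L^\infty$ only up to a factor $\sqrt{\log(1/\varepsilon)}$. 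Bumps stacked over $\sim\log(1/\varepsilon)$ dyadic scales are compatible with all of these constraints.
\end{itemize}

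Step~2 also misreads the two tools.
\begin{itemize}
\item The Magnanini--Molinarolo--Poggesi identity controls only the weighted quantity $\int_\Omega(\overline v-v)|D^2h|^2$. It gives no unweighted $L^2$ bound on the traceless Hessian.
\item The linear boundary growth the argument needs is not the Lipschitz bound $|u(x)-u(y)|\lesssim|x-y|$; that is just the $P$-function gradient bound. What is needed is the nondegeneracy lower bound $\varepsilon-v\ge a_*\,d(x,\partial\Omega)$ of Lemma~\ref{lem:linear-boundary-growth}, proved by blow-up using $c_\Omega\ge c_*>0$. That lower bound is what converts the weight into $d$.
\end{itemize}

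The missing idea is one you half-state yourself in Step~3: the Neumann datum of the harmonic remainder is controlled by the radial defect. Feed this back into the weighted identity rather than into a Dirichlet-to-Neumann linearization. Use the paper's normalization: $v=-2u+c_0$, $h=\frac12|x-z|^2-v$, and $z$ a minimum point of $v$, which has uniform depth and satisfies $\nabla h(z)=0$. Set $\eta=\rho_e-\rho_i$.
\begin{itemize}
\item The inclusions $B_{\rho_i}(z)\subset\Omega\subset B_{\rho_e}(z)$ give $\rho_i\le(x-z)\cdot\nu\le\rho_e$, $1/\rho_e\le c_\Omega\le1/\rho_i$ and $\rho_i\le1\le\rho_e$.
\item Hence $\|h_\nu\|_{L^\infty(\partial\Omega)}\le3\eta$, once the preliminary rate of Theorem~\ref{thm:main} gives $\eta\le\frac12$.
\item Every boundary term in \eqref{eq:MMP-identity} is then $O(\varepsilon\eta+\varepsilon^2)$.
\item The linear growth bound yields $\int_\Omega d\,|D^2h|^2\lesssim\varepsilon(\eta+\varepsilon)$.
\item Lemma~\ref{lem:uniform-weighted-oscillation} yields $\osc_{\overline\Omega}h\lesssim\sqrt{\varepsilon(\eta+\varepsilon)}$.
\item Finally, $\eta\le\rho_e^2-\rho_i^2\le2\osc_{\overline\Omega}h+2\varepsilon$, and Young's inequality closes this to $\eta\lesssim\varepsilon$.
\end{itemize}
All constants depend only on diameter and inradius, not on curvature, so the estimate passes to arbitrary convex domains by approximation. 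No Fourier analysis or Dirichlet-to-Neumann comparison is needed.

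Your Step~4 follows the paper's route but leaves the constant uncomputed. The first-order correction solves $\Delta u_1=0$, $\partial_r u_1=\frac12\cos2\theta$, so $u_1=\frac14r^2\cos2\theta$. The boundary trace is then $u_0(1)-\frac t4\cos2\theta+o(t)$, which gives $O(\Omega_t)=|t|/2+o(|t|)$.
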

	
	Theorem~\ref{thm:intro-quantitative} is a full-convex-class estimate: its constants do not depend on an a priori diameter bound, curvature bounds, touching-ball radii, or boundary smoothness. After Theorem~\ref{thm:disk-conv} supplies uniform diameter and inradius bounds, Section~\ref{sec:quantitative-full-convex} combines a tangential-gradient estimate for the Neumann torsion function, a boundary $P$-function estimate, and the reverse-Serrin identity of Magnanini-Molinarolo-Poggesi \cite{MagnaniniMolinaroloPoggesi2024} to obtain a preliminary quantitative bound. Section~\ref{sec:sharp-stability} adds a uniform linear boundary-growth estimate and a control of the normal derivative of the harmonic remainder by the radial defect. Together with a two-dimensional weighted oscillation estimate, these ingredients yield \eqref{eq:intro-sharp-estimate}. The perturbative family above proves that the linear order is sharp.

	The boundary oscillation   above captures the full range of the boundary
	trace. It is natural to ask whether a substantially weaker, one-sided
	$L^1$-type deficit still detects balls.
	We therefore set
	\begin{equation}\label{eq:intro-A-deficit}
		A(\Omega):=\frac1{P(\Omega)}\int_{\partial\Omega}u_\Omega\,d\sigma
		-\min_{\partial\Omega}u_\Omega,
	\end{equation}
	and show that the similar qualitative picture survives.   
	
	\begin{theorem}\label{thm:intro-average}
		Let $\Omega_k\subset\R^2$ be bounded convex domains with $|\Omega_k|=\pi$ and $A(\Omega_k)\to0$. Then
		\[
		\inf_{z\in\R^2}d_H(\Omega_k,B_1(z))\to0.
		\]
		Moreover, there exist universal constants $C>1$ and $\delta_A>0$ such that, if $\Omega\subset\R^2$ is a bounded convex domain  satisfying  $|\Omega|=\pi$ and $A(\Omega)\le\delta_A$, then
		\begin{align}
			\label{eq:2/3stability}
			R_\Omega-r_\Omega
			+\inf_{z\in\R^2}d_H(\Omega,B_1(z))
			\le C A(\Omega)^{2/3}.
		\end{align}
	\end{theorem}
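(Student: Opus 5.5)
The plan is to reduce both parts of Theorem~\ref{thm:intro-average} to the machinery already developed for $O(\Omega)$. The key observation is that the one-sided deficit $A(\Omega)$ controls the full oscillation $O(\Omega)$ in a weaker, nonlinear way. The normalization $\int_{\partial\Omega}u_\Omega\,d\sigma=0$ gives $A(\Omega)=-\min_{\partial\Omega}u_\Omega$. So a small $A$ says that $u_\Omega\ge -A$ on $\partial\Omega$ while having zero mean there. Hence the negative part of the trace is small in $L^\infty$, and the positive part is small in $L^1$: indeed $\int_{\partial\Omega}u_\Omega^+\,d\sigma=\int_{\partial\Omega}u_\Omega^-\,d\sigma\le A\,P(\Omega)$. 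To upgrade this to a sup bound, I will use the uniform Lipschitz/H\"older control of the trace available once geometry is controlled. Concretely, the tangential-gradient estimate and boundary $P$-function bound of Section~\ref{sec:quantitative-full-convex} give $|\nabla_\tau u_\Omega|\le C$ on $\partial\Omega$ for convex planar domains with bounded diameter and inradius. Suppose $\max u_\Omega=M$ at a boundary point. Then $u_\Omega\ge M/2$ on an arc of length $\gtrsim M$, so $M^2\lesssim A$. This yields only $O(\Omega)\lesssim A^{1/2}$, which is too weak for the exponent $2/3$.

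\textbf{Qualitative part.} I first establish compactness. The thin-domain argument of Lemma~\ref{lem:thin-lower} and Section~\ref{sec:diameter-compactness} produces a gap between a bulk average and a boundary average of a test function. I expect this gap can be made one-sided: it should show that on long thin convex domains $\min u_\Omega$ lies well below the boundary mean, i.e.\ $A$ is bounded below. I would reread that lemma to check which side of the trace it exploits, and rerun it with $A$ in place of $O$. This gives a uniform diameter bound and hence a uniform inradius bound. Along a subsequence the normalized $u_{\Omega_k}$ then converge, via the Geng--Shen $W^{1,p}$ theory, to a solution on a limit Lipschitz convex domain. By the $L^1$/one-sided bound above, the limit trace is constant. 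The Figalli--Zhang theorem then identifies the limit as a disk. Alternatively, once compactness is known, the estimate $O\lesssim A^{1/2}$ lets me invoke Theorem~\ref{thm:disk-conv} directly.

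\textbf{Quantitative $2/3$ bound.} Using $O\lesssim A^{1/2}$ and Theorem~\ref{thm:intro-quantitative} would only give $A^{1/2}$. To reach $2/3$, I would instead go through the weighted oscillation estimate of Section~\ref{sec:sharp-stability}. Near a disk, the geometric defect $\eta:=R_\Omega-r_\Omega+\inf_z d_H$ is controlled linearly by a suitable norm of the boundary trace $g:=u_\Omega|_{\partial\Omega}$; I expect this to be essentially an $L^1$- or $L^2$-type quantity, via the MMP reverse-Serrin identity. The plan is to interpolate between the $L^1$ bound $\|g\|_{L^1}\lesssim A$ and the tangential Lipschitz bound. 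In one dimension, a function with zero mean, $g\ge -A$, and $|g'|\le L$ satisfies $\|g\|_{L^\infty}\lesssim (AL)^{1/2}$. The exponent $2/3$ should instead come from a second-order bound. Near the disk I expect $|g''|\lesssim 1$, using curvature control from the uniform boundary-growth estimate, or more precisely $|g'|\lesssim \eta$. Then a bump of height $M$ has width $\gtrsim (M/\eta)^{1/2}$, so $M^{3/2}\eta^{-1/2}\lesssim A$. Feeding this into the linear estimate $\eta\lesssim M$ gives $M^{3/2}\lesssim A\,M^{1/2}$, which is circular. The correct bookkeeping is probably $|g''|\lesssim 1$, which gives $M^{3/2}\lesssim A$, hence $\eta\lesssim O\lesssim A^{2/3}$.

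\textbf{Main obstacle.} The main obstacle is establishing this second-order (curvature-type) control of the boundary trace on a merely convex, nonsmooth boundary. I would try to derive it from the boundary $P$-function: since $P=|\nabla u|^2+u$ is subharmonic, its boundary bound gives a one-sided semiconcavity estimate on $g$. One-sidedness suffices here, because bumps of positive height are exactly what must be excluded, while the negative part is already bounded by $A$.
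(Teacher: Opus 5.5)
Given a uniform diameter bound, your argument matches the paper's. Both of your routes to the qualitative statement work. One is that the limit trace is $\ge0$ with zero boundary mean, hence $\equiv0$, and Figalli--Zhang applies. The other is that $O\to0$, so Theorem~\ref{thm:disk-conv} applies.

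The exponent $2/3$ comes from the boundary $P$-function, as you suspected. The proof of Lemma~\ref{lem:boundary-gradient} uses no smallness, and on smooth strictly convex domains it gives $|\partial_\tau u_\Omega|^2\le\max_{\partial\Omega}u_\Omega-u_\Omega$ on $\partial\Omega$. This is a first-order inequality, not a bound on $g''$. It says $\sqrt{\max u_\Omega-u_\Omega}$ is $\tfrac12$-Lipschitz in arclength. Hence the trace stays above $\max u_\Omega-s^2/4$ on an arc of length $4\sqrt{O}$ around a maximum point. Integrating $u_\Omega-\min_{\partial\Omega}u_\Omega$ then gives $\tfrac83O^{3/2}\le P(\Omega)A(\Omega)$ (Lemma~\ref{lem:A-controls-O}). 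General convex domains follow by approximation, so this is not the real obstacle. Combined with Theorem~\ref{thm:intro-quantitative}, it yields the $A^{2/3}$ bound.

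Your side guess that the geometric defect is linearly controlled by an $L^1$- or $L^2$-norm of the trace is false. Since $\|g\|_{L^1}\le2P(\Omega)A(\Omega)$ and $\|g\|_{L^2}^2\le O\,\|g\|_{L^1}$, it would beat the exponent $2/3$, which the truncated disks $B_1\cap\{x_1<1-t\}$ show is optimal. You do not need it.

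The genuine gap is the compactness step, on which everything above depends (bounded $P(\Omega)$, Geng--Shen, Theorem~\ref{thm:disk-conv}). Rerunning Lemma~\ref{lem:thin-lower} with $A$ in place of $O$ fails. The obstruction is not which side of the trace is used; it is $L^\infty$ versus $L^1$.

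That proof needs $\|v\|_{L^\infty(K)}=o(a)$ on the whole domain, obtained from the two-sided bound via the maximum principle. This kills both terms in $\int_K\nabla v\cdot\nabla\psi=\int_{\partial K}v\,\psi'\nu_x\,d\sigma-\int_Kv\,\psi''\,dx$, and for that it suffices that $\nu_x$ is small in $L^1(\partial K)$. From $A$ you only know $w=u-\min_{\partial K}u\ge0$ with $\int_{\partial K}w\,d\sigma=P(K)A(K)$. You would therefore need $\psi'\nu_x$ small in $L^\infty(\partial K)$. This fails near the diametral endpoints, where $\partial K$ may be steep. The best you get is $A(K)\gtrsim a^2$, i.e.\ $A(\Omega)\gtrsim\diam(\Omega)^{-2}$, which gives no diameter bound. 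Nor does the $P$-function rescue this: combining Lemma~\ref{lem:thin-lower} with $O\lesssim(PA)^{2/3}$ gives only $A(\Omega)\gtrsim\diam(\Omega)^{-1}$.

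Two ideas are missing.
\begin{enumerate}
\item Use a test function whose derivative vanishes at both endpoints, $\Phi=x^2(1-x)^2$. Concavity of the graphs ($p'\le p/x\le2a/x$ where $p'\ge0$, and symmetrically) gives $|\partial_\nu\Phi|\le4a$ on all of $\partial K$. The bulk/boundary gap $\frac1a\int_K\Phi-\frac1{P(K)}\int_{\partial K}\Phi\ge c_1a$ must then be reproved for $\Phi$. The paper does this in Lemma~\ref{lem:quartic-gap}, via the tent representation and Lemma~\ref{lem:endpoint}.
\item You need an interior bound $\int_Kw\le2aA(K)+Ca^3$ to handle the term $\int_Kw\,\Phi''$. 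The paper writes $w=\tau+h$, with the Dirichlet torsion function satisfying $0\le\tau\le Ca^2$. It then applies the Hermite--Hadamard inequality to the nonnegative harmonic function $h$.
\end{enumerate}
Together these give $c_1a^2\le CaA(K)+Ca^3$, so $A(K)\ge c_Aa$. Hence $A(\Omega)\ge c_A\pi$ whenever the diameter is large (Lemma~\ref{lem:A-thin-lower}).
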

	In view of $A(\Om)\le O(\Om)$, the high-dimensional rectangular and planar non-convex counterexamples persist for $A$.
	Thus the dimensional and convexity restrictions in
	Theorem~\ref{thm:intro-average} are essential in the same sense as for $O(\Omega)$.
	
	None of the preceding stability results follows formally from smallness of $A$. Nevertheless, 
	The proof  of  Theorem~\ref{thm:intro-average}  introduce two  independent ingredients. A quartic test function $x^2(1-x)^2$, together with the sharp Hermite--Hadamard inequality for nonnegative subharmonic functions, yields a lower bound $A(K)\ge c|K|$ for unit-diameter thin convex sets. Once diameter and inradius are controlled, the boundary $P$-function guarantees $O(\Omega)\le C A(\Omega)^{2/3}$, and Theorems~\ref{thm:disk-conv} and~\ref{thm:intro-quantitative} apply.  For more details, see Section \ref{sec:averaged-deficit}.

	Another motivation for the averaged deficit comes from optimal thermal insulation. The asymptotic optimization of thin insulating layers goes back at least to Buttazzo~\cite{Buttazzo1988}, and the two variational models relevant here were developed by Bucur, Buttazzo, and Nitsch~\cite{BucurButtazzoNitsch2017,BucurButtazzoNitschNotices2017}. For the energy problem, Huang--Li--Li~\cite{HuangLiLi2024} identified, for every nonround domain $\Omega$, a positive concentration-breaking threshold  established by
	\begin{equation}\label{eq:m0-intro}
		m_0(\Omega)=\frac{P(\Omega)^2}{|\Omega|}\,A(\Omega).
	\end{equation}
	Thus $A(\Omega)$ is precisely the geometric boundary deficit entering the critical mass. In the planar convex class, Theorem~\ref{thm:intro-average} therefore provides a stability result for this threshold. Indeed, when $|\Omega|=\pi$, the isoperimetric inequality yields $A(\Omega)\le m_0(\Omega)/(4\pi)$; consequently, $m_0(\Omega_k)\to0$ forces, up to translations, $\Omega_k\to B_1$ in the Hausdorff topology. Moreover, for sufficiently small $m_0(\Omega)$,
	\[
	R_\Omega-r_\Omega+\inf_{z\in\mathbb R^2}d_H(\Omega,B_1(z))
	\le C m_0(\Omega)^{2/3}.
	\]
	
	\medskip
	
	\noindent
	\textbf{Comparison with Neumann-data stability}
	
	It is important to distinguish the present problem from the classical quantitative theory for Serrin's overdetermined problem. In the classical setting, one prescribes the Dirichlet condition
	\[
	v=0\qquad\text{on }\partial\Omega
	\]
	and measures the deviation of the Neumann datum $\partial_\nu v$ from a constant. Quantitative symmetry results in this direction have been obtained by moving-plane and parallel-surface arguments~\cite{AftalionBuscaReichel1999,CiraoloMagnanini2014,CiraoloMagnaniniVespri2016,CiraoloMagnaniniVespri2018}, integral identities~\cite{BNST2008,MP201,MP202}, variational methods~\cite{Feldman2018}, and local implicit-function techniques~\cite{GilsbachOnodera2021}. For radius or Hausdorff conclusions, however, the available global estimates involve geometric parameters that exclude degeneration, whereas the optimal linear theory is local near the ball.
	
	This geometric nondegeneracy cannot, in general, be recovered from a small absolute Neumann-data discrepancy, even among fixed-area planar convex domains. Indeed, consider
	\[
	E_\varepsilon
	=
	\left\{(x,y):\frac{x^2}{a_\varepsilon^2}
	+\frac{y^2}{b_\varepsilon^2}<1\right\},
	\qquad
	a_\varepsilon=\varepsilon^{-1},\quad b_\varepsilon=\varepsilon,
	\]
	so that $|E_\varepsilon|=\pi$. Its Dirichlet torsion function is
	\[
	v_\varepsilon(x,y)
	=
	\frac{1-x^2/a_\varepsilon^2-y^2/b_\varepsilon^2}
	{2(a_\varepsilon^{-2}+b_\varepsilon^{-2})}.
	\]
	On $\partial E_\varepsilon$, parametrized by
	$x=a_\varepsilon\cos\theta$ and $y=b_\varepsilon\sin\theta$,
	\[
	-\partial_\nu v_\varepsilon
	=
	\frac{\sqrt{a_\varepsilon^{-2}\cos^2\theta
			+b_\varepsilon^{-2}\sin^2\theta}}
	{a_\varepsilon^{-2}+b_\varepsilon^{-2}}
	\le C\varepsilon.
	\]
	Since the compatible constant
	$c_\varepsilon=-|E_\varepsilon|/P(E_\varepsilon)$ also satisfies
	$|c_\varepsilon|\le C\varepsilon$, one has
	\[
	\|\partial_\nu v_\varepsilon-c_\varepsilon\|_{L^\infty(\partial E_\varepsilon)}
	\le C\varepsilon\to0,
	\]
	while $\diam(E_\varepsilon)\to\infty$. Thus a small absolute deviation of the Neumann datum from a constant does not by itself produce geometric compactness.
	
	By contrast, we impose the compatible constant Neumann datum exactly and measure
	\[
	O(\Omega)=\osc_{\partial\Omega}u_\Omega.
	\]
	In the planar convex class, Theorem~\ref{thm:disk-conv} shows that the smallness of boundary oscillation itself controls the diameter and hence, at fixed area, the inradius. Thus the Dirichlet-trace  oscillation  supplies the geometric compactness that must be imposed externally in much of the classical Neumann-data stability theory. The distinction is therefore not a formal interchange of boundary data: the two quantities provide  genuinely different coercive effects on the domain.
	
	\medskip
	
	\noindent
	\textbf{Relation with the reverse Serrin problem}
	
	The oscillation-only Dirichlet-perturbation problem was explicitly formulated as Question~1 in our 2023 work~\cite{LiXieYang2023}, where the high-dimensional rectangular counterexample was also established. Subsequently and independently, Magnanini--Molinarolo--Poggesi~\cite{MagnaniniMolinaroloPoggesi2024} studied the same constant-Neumann system under the terminology \emph{reverse Serrin problem}. Their general integral identity is an important ingredient in our quantitative proof. They obtained, for $C^2$ or $C^{2,\alpha}$ domains, estimates of the schematic form
	\[
	\label{MMPgen}
	\rho_e-\rho_i
	\le C\,\psi\!\left(\osc_{\partial\Omega}u+
	\|\nabla_{\partial\Omega}u\|_{L^p(\partial\Omega)}\right),
	\qquad p=2\ \text{or}\ \infty,
	\]
	where $\psi$ is a given continuous function, with in particular $\phi(t)=t$ for $n=2$, and the constants depend on the diameter and on uniform interior and exterior sphere radii, or on comparable boundary-regularity data. Thus those estimates involve both a stronger analytic control and an a priori nondegenerate geometric class.
	
The present paper completes the program initiated in~\cite{LiXieYang2023} in dimension two. Unlike the estimates above, our result assumes only the oscillation $O(\Omega)$ and no a priori nondegenerate geometric class: the smallness of $O(\Omega)$ itself first generates the uniform bounded diameter, while a boundary $P$-function estimate controls the tangential gradient. The reverse-Serrin identity of Magnanini--Molinarolo--Poggesi remains a very important ingredient, but it is combined with further linear-growth and harmonic-remainder estimates to close the argument at the sharp linear scale.

	\section{High-dimensional and non-convex instability}

	The following high-dimensional construction was already  presented in~\cite[Section~5]{LiXieYang2023}. We include the short proof for completeness and to keep the dimensional picture self-contained.
	
	\begin{proposition}[Rectangular counterexample]\label{prop:rectangles}
		Let \(n\ge3\).  There exists a sequence of fixed-volume rectangles \(\Omega_\eps\subset\R^n\) such that \(O(\Omega_\eps)\to0\) as \(\eps\downarrow0\), while the Hausdorff distance from any translate or rotation of \(\Omega_\eps\) to every ball of the same volume tends to infinity.
	\end{proposition}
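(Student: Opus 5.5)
Take a box $\Omega_\eps=(0,a_1)\times\cdots\times(0,a_n)$ and use the fact that for a box the Neumann torsion function can be written down exactly. Try a separable quadratic
\[
u(x)=\sum_{i=1}^n\Bigl(-\frac{\alpha_i}{2}x_i^2+\frac{\alpha_i a_i}{2}x_i\Bigr),\qquad \sum_i\alpha_i=1 .
\]
Then $-\Delta u=1$, and on the faces $x_i=0$ and $x_i=a_i$ the outward normal derivative equals $-\alpha_i a_i/2$. The Neumann datum must be the constant $-|\Omega|/P(\Omega)$, so we need $\alpha_i a_i/2=\lambda:=|\Omega|/P(\Omega)$ for every $i$. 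This gives $\alpha_i=2\lambda/a_i$, and the condition $\sum\alpha_i=1$ reads $2\lambda\sum_i a_i^{-1}=1$. That holds automatically, because $P(\Omega)=2|\Omega|\sum_i a_i^{-1}$ for a box. Hence
\[
u_\Omega=\sum_i \frac{\lambda}{a_i}x_i(a_i-x_i)+\text{const}
\]
is the Neumann torsion function, up to the normalizing constant.

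Next compute the boundary oscillation. Each term $\frac{\lambda}{a_i}x_i(a_i-x_i)$ takes values in $[0,\lambda a_i/4]$. On $\partial\Omega$ at least one coordinate sits on a face, where its term vanishes. The minimum over $\partial\Omega$ is $0$, attained at a vertex. The maximum is attained at the center of a face $x_j=0$, where all other terms are maximal, and equals $\lambda\bigl(\sum_i a_i/4-\min_j a_j/4\bigr)$. Therefore
\[
O(\Omega)=\frac{\lambda}{4}\Bigl(\sum_i a_i-\min_i a_i\Bigr),\qquad \lambda=\Bigl(2\sum_i a_i^{-1}\Bigr)^{-1}.
\]

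Now pick the side lengths for $n\ge3$. Take $a_1=\eps^{-(n-1)}$ and $a_2=\dots=a_n=\eps$, so that the volume is $1$ (rescale afterwards to the volume of a fixed ball). Then $\sum_i a_i^{-1}\ge (n-1)\eps^{-1}$, which gives $\lambda\le \eps/(2(n-1))$. Also $\sum_i a_i-\min_i a_i=\eps^{-(n-1)}+(n-2)\eps$. Hence
\[
O(\Omega_\eps)\lesssim \eps\cdot\eps^{-(n-1)}=\eps^{2-n},
\]
which diverges. So the thin-in-all-but-one direction choice fails. This failure is the main obstacle: we have to balance the directions so that $\lambda$ beats the largest side.

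The fix is to make two sides long and the rest short. Take $a_1=a_2=L$ and $a_3=\dots=a_n=\eps$ with $L^2\eps^{n-2}=1$. Then $\lambda\approx \eps/(2(n-2))$ and the oscillation sum is about $2L$. This gives
\[
O\approx c\,\eps L=c\,\eps^{1-(n-2)/2}=c\,\eps^{(4-n)/2}.
\]
This tends to $0$ only for $n=3$. For general $n$, take $m$ long sides of length $L$ and $n-m$ short sides of length $\eps$, with $L^m\eps^{n-m}=1$. Then $O\approx \eps L=\eps^{1-(n-m)/m}$, which tends to $0$ iff $m>n-m$, that is, $m>n/2$. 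So choose $m=n-1\ge2$ (possible since $n\ge3$): $n-1$ long sides $L=\eps^{-1/(n-1)}$ and one short side $\eps$. Then $\lambda\approx\eps/2$, the oscillation sum is about $(n-1)L$, and
\[
O(\Omega_\eps)\approx \frac{(n-1)}{8}\,\eps^{(n-2)/(n-1)}\to0 .
\]
For $n=2$ this would force $m=1=n/2$, and the oscillation stays bounded away from $0$, which is consistent with the planar theorems.

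Finally, the diameter of $\Omega_\eps$ is at least $L\to\infty$, while the ball of the same volume has fixed radius. Hausdorff distance is invariant under rigid motions and satisfies $d_H(\Omega,B)\ge \tfrac12(\diam\Omega-\diam B)$. Hence the distance from any translate or rotation of $\Omega_\eps$ to every ball of the same volume tends to infinity. The exponent bookkeeping in the choice of side lengths is the step to double-check; the rest is explicit computation.
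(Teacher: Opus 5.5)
Your proof is correct and is essentially the paper's argument. The paper also writes down the explicit quadratic Neumann torsion function on a box and takes one short side ($a_1=\eps^{n-1}$) with $n-1$ long sides ($a_i=1/\eps$); after reparametrizing this is exactly your final choice, with the same rate $\eps^{n-2}=(\eps^{n-1})^{(n-2)/(n-1)}$. The only differences are that you compute the oscillation exactly as $\frac{\lambda}{4}\bigl(\sum_i a_i-\min_i a_i\bigr)$ while the paper uses the upper bound $\frac{\sigma_1\sigma_n}{2\sigma_{n-1}}$, and that your exploratory dead-end configurations can simply be dropped from the write-up.
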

	
	\begin{proof}
		Let
		\[
		\Omega=\prod_{i=1}^n(-a_i,a_i).
		\]
		Then, up to an additive constant, a direct calculation yields  that 
		\begin{equation}\label{eq:rectangle-u}
			u(x)=
			-\left(\sum_{i=1}^n\frac{x_i^2}{a_i}\right)
			\left(\sum_{i=1}^n\frac{2}{a_i}\right)^{-1}.
		\end{equation}
		Indeed, \(-\Delta u=1\), and on the face \(x_i=\pm a_i\),
		\[
		\partial_\nu u
		= -\frac{2a_i/a_i}{\sum_{j=1}^n2/a_j}
		= -\left(\sum_{j=1}^n\frac1{a_j}\right)^{-1},
		\]
		which is independent of the face.  This constant equals \(-|\Omega|/P(\Omega)\). If \(\sigma_m\) denotes the \(m\)-th elementary symmetric polynomial of \((a_1,\ldots,a_n)\), then
		\[
		\left(\sum_{i=1}^n\frac1{a_i}\right)^{-1}=\frac{\sigma_n}{\sigma_{n-1}}.
		\]
		Therefore
		\[
		\osc_{\partial \Omega}u
		\le
		\frac{\sigma_n}{2\sigma_{n-1}}
		\sum_{i=1}^n a_i
		=
		\frac{\sigma_1\sigma_n}{2\sigma_{n-1}}.
		\]
		Let $\Omega_\epsilon$ be the rectangle by choosing 
		\[
		a_1=\eps^{n-1},
		\qquad
		a_i=\frac1\eps\quad (i=2,\ldots,n).
		\]
		Then the volume is fixed.  
		Here \(\sigma_n=1\),
		\[
		\sigma_1\sim \frac{n-1}{\eps},
		\qquad
		\sigma_{n-1}\sim \frac1{\eps^{n-1}},
		\]
		and hence
		\[
		O(\Omega_\eps)\le \osc_\Omega u\le C\eps^{n-2}\to0.
		\]
		Moreover \(\operatorname{diam}(\Omega_\eps)\to\infty\), whereas balls of the same fixed volume have a uniformly bounded diameter. Hence the Hausdorff distance from any translate or rotation of \(\Omega_\eps\)  and any such ball tends to infinity.  This proves the claim and so establishes Theorem~\ref{thm:dimension-convexity}\textup{(i)}.  
	\end{proof}
	
	We next prove the failure of stability even in dimension $n=2$ for non-convex domains.
	
	\begin{proposition}
		Let
		\[
		A_R:=B_R\setminus \overline{B_{\rho(R)}},
		\qquad
		\rho(R)=\sqrt{R^2-1},
		\qquad R>1,
		\]
		be the family of planar concentric annuli of area $\pi$. Then
		\[
		O(A_R)\to 0
		\qquad\text{as }R\to\infty,
		\]
		while the Hausdorff distance between  any translate or rotation of $A_R$ to every ball of the same area tends to infinity.
	\end{proposition}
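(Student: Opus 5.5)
Since the annulus is radially symmetric and the Neumann problem has a unique solution up to constants, I will look for $u_{A_R}$ in the radial form $u(r)=-\tfrac{r^2}{4}+c\log r$, which satisfies $-\Delta u=1$ for every constant $c$. The whole argument then reduces to choosing $c$ so that the prescribed flux $-|A_R|/P(A_R)=-\pi/(2\pi(R+\rho))=-\tfrac{1}{2(R+\rho)}$ holds on both boundary circles, and reading off the two boundary values.

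On the outer circle $r=R$ the outward normal is $\partial_r$, and on the inner circle $r=\rho$ it is $-\partial_r$. With $u'(r)=-\tfrac r2+\tfrac cr$, the two conditions read
\[
-\frac R2+\frac cR=-\frac{1}{2(R+\rho)},\qquad \frac\rho2-\frac c\rho=-\frac{1}{2(R+\rho)}.
\]
The first gives $c=\tfrac{R^2}{2}-\tfrac{R}{2(R+\rho)}$, and the second gives $c=\tfrac{\rho^2}{2}+\tfrac{\rho}{2(R+\rho)}$. Their difference is $\tfrac{R^2-\rho^2}{2}-\tfrac12=0$ because $R^2-\rho^2=1$, so both equations hold simultaneously; this is just the compatibility condition again. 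By uniqueness up to constants, this $u$ is $u_{A_R}$ after adding a constant.

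Because $u$ is radial, it is constant on each boundary circle, so
\[
O(A_R)=|u(R)-u(\rho)|=\Bigl|-\frac{R^2-\rho^2}{4}+c\log\frac R\rho\Bigr|=\Bigl|-\frac14+c\log\frac R\rho\Bigr|.
\]
The main step is the asymptotics as $R\to\infty$. We have $\rho=R\sqrt{1-R^{-2}}=R-\tfrac{1}{2R}+O(R^{-3})$, hence $R+\rho=2R-\tfrac1{2R}+O(R^{-3})$. For the logarithm,
\[
\log\frac R\rho=-\tfrac12\log(1-R^{-2})=\frac{1}{2R^2}+\frac{1}{4R^4}+O(R^{-6}).
\]
For the constant, $\tfrac{R}{2(R+\rho)}=\tfrac14+O(R^{-2})$, so
\[
c=\frac{R^2}{2}-\frac14+O(R^{-2}).
\]
Multiplying, $c\log(R/\rho)=\tfrac14+\tfrac{1}{8R^2}-\tfrac{1}{8R^2}+O(R^{-4})$, which equals $\tfrac14+O(R^{-4})$. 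Therefore $O(A_R)=O(R^{-4})\to0$.

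This expansion is the delicate point. The leading terms cancel exactly, and the $R^{-2}$ terms also cancel, so the expansions must be carried to the orders shown; any bound of order $R^{-2}$ or better already suffices for the claim. If the expansion becomes messy, I would instead write $c\log(R/\rho)-\tfrac14$ as an explicit function of $s=R^{-2}$ and check that it tends to $0$ as $s\to0$.

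For the geometric part, note that every translate or rotation of $A_R$ has diameter $2R$, while every disk of area $\pi$ has diameter $2$. Since the diameter is $2$-Lipschitz with respect to the Hausdorff distance, that distance is at least $R-1\to\infty$.
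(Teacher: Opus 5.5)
Your proof is correct and follows the paper's route. Your constant $c=\tfrac{R^2}{2}-\tfrac{R}{2(R+\rho)}$ is exactly the paper's coefficient $\tfrac{R\rho}{2}$ (because $R-\rho=1/(R+\rho)$), you read off the oscillation as the jump between the two boundary constants, and the diameter comparison gives the Hausdorff divergence. Your asymptotic expansion, which the paper leaves implicit, is also right, and carrying it one order further gives $O(A_R)=\tfrac{1}{96R^4}+O(R^{-6})$.
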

	
	\begin{proof}
		It is easy to verify that 
		\[
		u_R(x)
		=
		\frac{\rho(R)^2-|x|^2}{4}
		+
		\frac{\rho(R)R}{2}\log\frac{|x|}{\rho(R)}.
		\]
		Since $u_R$ is radial, it is constant on each connected component of
		$\partial A_R$. On the inner boundary $\partial B_{\rho(R)}$, $u_R$ vanishes. On the outer boundary $\partial B_R$, we have
		\[
		u_R(x)
		=
		\frac{\rho(R)^2-R^2}{4}
		+
		\frac{\rho(R)R}{2}\log\frac{R}{\rho(R)}.
		\]
		It follows from the definition of  $\rho(R)$ that   
		\[
		u_R\big|_{\partial B_R}
		=
		-\frac14
		+
		\frac{R\sqrt{R^2-1}}{2}
		\log\frac{R}{\sqrt{R^2-1}}, \qquad |x|=R.
		\]
		Then one calculates
		\[
		O(A_R)
		=
		\frac14
		-
		\frac{R\sqrt{R^2-1}}{2}
		\log\frac{R}{\sqrt{R^2-1}},
		\]
		and  
		\begin{equation*}
			O(A_R)\to0\qquad \text { as } \qquad\diam(\Omega_j)\to\infty.
		\end{equation*}
		Moreover, \(\operatorname{diam}(A_{R})\to\infty\). 
		Therefore, Theorem~\ref{thm:dimension-convexity}\textup{(ii)}  follows from the same argument as in part \textup{(i)}.
	\end{proof}

	\section{Small boundary oscillation excludes unbounded diameter at fixed area}
	\label{sec:diameter-compactness}
	The goal of this section is to prove that, under the area normalization $|\Omega|=\pi$, the condition $O(\Omega_k)\to0$ prevents the diameters from diverging. 
	Essentially, we shall prove the following lower bound estimate for boundary oscillation of Neumann torsion function.
	
	\begin{lemma}\label{lem:thin-lower}
		Let $K\subset\R^2$ be a bounded convex domain with $\diam(K)=1$, and let $v$ solve
		\[
		\begin{cases}
			-\Delta v=1 & \text{in }K,\\
			\partial_\nu v=-\dfrac{|K|}{P(K)} & \text{on }\partial K.
		\end{cases}
		\]
		Then there exist constants $a_0>0$ and $c_0>0$ such that
		\[
		\osc_{\partial K}v\ge c_0a
		\qquad\text{whenever }|K|=a\le a_0.
		\]
	\end{lemma}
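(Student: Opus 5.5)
The plan is to test the weak formulation \eqref{eq:weak-form} against a function of the long coordinate only, and compare the resulting bulk and boundary averages. Rotate and translate so that a diameter of $K$ is the segment $[0,1]\times\{0\}$. Then $K\subset[0,1]\times\R$, and by convexity $K$ contains the triangle spanned by this segment and any point of $K$. Hence the width satisfies $w\le 2a$ (indeed $w\lesssim a$), and $K=\{(x,y):0<x<1,\ g_-(x)<y<g_+(x)\}$ with $h:=g_+-g_-$ concave and $\int_0^1 h=a$. We may assume $v$ is Hölder continuous on $\overline K$ (Kenig--Pipher).

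Take $\varphi(x,y)=\phi(x)$ with $\phi$ smooth on $[0,1]$, e.g.\ $\phi(x)=x^2(1-x)^2$ or $\phi(x)=x(1-x)$. Testing gives
\[
\int_K \partial_x v\,\phi'(x)\,dx\,dy=\int_K\phi\,dx\,dy-\frac{a}{P}\int_{\partial K}\phi\,d\sigma .
\]
The right side should be of size $\gtrsim a$ with a definite sign. Since $P=2+O(a)$ and the boundary consists of two graphs over $[0,1]$ with slopes bounded in $L^1$ (each $g_\pm$ is concave or convex, so its total variation is $\le 2w$), we get $\frac{1}{P}\int_{\partial K}\phi\,d\sigma=\int_0^1\phi\,dx+o(1)$. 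The bulk term is $\int_0^1\phi\,h\,dx$. The key point is that, $h$ being concave with mean $a$, the weighted average $\frac1a\int\phi h$ differs from $\int_0^1\phi$ by a definite amount, for a suitable $\phi$. This is a Hermite--Hadamard/Favard-type inequality for concave densities: for $\phi=x(1-x)$, a concave $h$ puts more mass in the middle, so $\frac1a\int\phi h\ge\frac{1}{6}+\kappa$ for some universal $\kappa>0$. The extremal cases are the triangular profiles, which give $\frac{1}{6}+\frac1{12}\cdot\ldots$; we will check that the minimum over triangle profiles is strictly above the uniform value $1/6$. So the right side is at least $\kappa a - o(a)$. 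The $o(1)$ boundary error must be $O(w)=O(a)\cdot$small, which needs care; alternatively we can choose $\phi$ so that the error term carries a factor $w\le Ca\ll1$ times a bounded quantity.

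For the left side, integrate by parts in $x$ on each vertical slice. Write $\int_K \partial_x v\,\phi'=-\int_K v\,\phi''+\int_{\partial K} v\,\phi'\,\nu_x\,d\sigma$. Now replace $v$ by $v-m$, with $m$ the midpoint of the range of $v$ on $\partial K$, so that $|v-m|\le \tfrac12 O$ on $\partial K$. The boundary term is then bounded by $C\,O\int_{\partial K}|\nu_x|\,d\sigma\le C\,O\,w$, which is tiny. The interior term $\int_K (v-m)\phi''$ is the main obstacle, since interior values of $v$ are not directly controlled by the boundary oscillation. To handle it, compare $v$ with its boundary values across each thin vertical chord: on a chord of length $h(x)\le w$, we have $v$ minus its boundary value at most $O(w^2)+$ a gradient term. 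More cleanly, the function $z=v-m+\frac{y^2}{2}\cdot$(correction) can be used, or we can apply the maximum principle with barrier $\frac{(y-g_-)(g_+-y)}{2}$ plus a Neumann correction of size $\frac{a}{P}\cdot$dist. The aim is $|v-m|\le \tfrac12 O + C w\cdot(\text{something small})$ in $K$, giving $|\int_K(v-m)\phi''|\le C a\,O + o(a)$. If the barrier fails because of the Neumann data, we will instead use the alternative route of choosing $\phi$ with $\phi''\equiv$ const and absorbing the term via the identity $\int_K (v-m)=\ldots$ obtained by testing with $\varphi=1$ and with $\varphi=y^2/2$.

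Combining the two sides gives $\kappa a-o(a)\le C(a+w)\,O + o(a)$, hence $O\ge c_0$. That is stronger than needed, and in particular $O\ge c_0 a$ once $a\le a_0$. The delicate step is making every error term genuinely $o(a)$, or bounded by a small multiple of $a$, uniformly in $a\le a_0$.
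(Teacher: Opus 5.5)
\textbf{The gap.} The argument fails at the claimed universal order-one excess $\frac1a\int_0^1\phi h\ge\int_0^1\phi+\kappa$. Concave profiles vanishing at the endpoints need not concentrate in the middle.
\begin{itemize}
\item Your own "extremal" tents, $h=2aT_t$, give $\frac1a\int_K x(1-x)\,dx\,dy=\frac{1+t(1-t)}{6}$, which tends to $\frac16$ as $t\to0$.
\item Trapezoids rising over $[0,\ell]$ and falling over $[1-\ell,1]$ with $\ell\sim a^2$ are compatible with $\diam K=1$. They have $h/a\to1$, so they give $\frac16+O(a^2)$.
\end{itemize}
The same flattening defeats every choice of $\phi(x)$, since both the bulk and the boundary averages then tend to $\int_0^1\phi$.

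Your intermediate conclusion $O\ge c_0$ is in fact false. For a rectangle of diameter $1$ and area $a$, the explicit solution \eqref{eq:rectangle-u} gives $O\approx a/8$. So the bulk--boundary gap is genuinely only of order $a$. The right side of your tested identity is then only of order $a^2$, and every error term must be $o(a^2)$, not $o(a)$.

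\textbf{What is needed.} The missing idea is the order-$a$ gap of Proposition~\ref{prop:gap}, which the paper proves by a dichotomy. Write $h/a=\int T_t\,d\beta(t)$ with $\beta([0,1])=2$, so that $\frac1a\int_K\psi=\frac16+\frac1{12}\int t(1-t)\,d\beta$.
\begin{itemize}
\item Either this excess is at least $Ma$, which already gives the gap.
\item Or $\beta$ puts almost all its mass within $O(a)$ of the endpoints. This forces $h\ge a/2$ at distance $O(a)$ from an endpoint, hence a perimeter excess $P-2\ge c_Ma$.
\end{itemize}
In the second case the gap comes from the boundary side. You need the sharp bound $\int_{\partial K}\psi\,d\sigma\le\frac13+C_Ba^2$, which holds because the weight $x(1-x)$ vanishes exactly where the endpoint slopes are steep. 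Then $\frac1P\int_{\partial K}\psi\,d\sigma\le\frac16-ca$. This $P-2$ effect is precisely what you discarded as an $o(1)$ error.

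\textbf{The interior term.} Once the gap is in hand, your "main obstacle" is easy at the required precision and needs no Neumann correction. Let $\tau$ be the Dirichlet torsion function, so $0\le\tau\le Ca^2$ by comparison with a strip of width $Ca$. Since $v-\tau$ is harmonic with the boundary values of $v$, you get $\|v-m\|_{L^\infty(K)}\le\frac12O+Ca^2$. Hence $\bigl|\int_K\partial_xv\,\psi'\bigr|\le CaO+Ca^3$, and combining with the gap gives $c_1a^2\le CaO+Ca^3$, i.e.\ $O\ge c_0a$ for small $a$.
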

	
	The following geometric result is crucial to obtain  Lemma \ref{lem:thin-lower}.
	
	\begin{proposition}
		\label{prop:gap}
		Let $K$ be a convex domain in $\mathbb{R}^2$ with $\diam(K)=1$. Suppose that, after a rigid motion, $(0,0)$ and $(1,0)$ are two endpoints of a diameter of $K$. Let $\psi(x):=x(1-x)$. Then there exist universal constants $a_0>0$ and $c_1>0$ such that, for every such $K$ with $|K|\le a_0$, 
		\begin{align}
			\label{eq:moment-gap}
			\frac{1}{|K|}\int_K\psi\,dx\,dy
			-\frac{1}{P(K)}\int_{\partial K}\psi\,d\sigma
			\ge c_1|K|.
		\end{align}
	\end{proposition}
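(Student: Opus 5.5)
Since $(0,0)$ and $(1,0)$ realize the diameter, every point of $K$ lies within distance $1$ of both, so the projection of $K$ onto the $x$-axis is $[0,1]$. I write
\[
K=\{(x,y):0<x<1,\ g(x)<y<f(x)\}
\]
with $f$ concave, $g$ convex, and $h:=f-g\ge0$ concave, so that $|K|=\int_0^1h$. The key observation is that $\psi$ is adapted to affine weights. Set $\Phi(x):=-\tfrac1{12}x^2(1-x)^2$. Then $\Phi''=\psi-\tfrac16$, and $\Phi=\Phi'=0$ at $x=0,1$, because $\psi-\tfrac16$ has zero mean and zero first moment on $[0,1]$. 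Integrating by parts twice should give the exact bulk identity
\[
\frac1{|K|}\int_K\psi
=\frac16+\frac{1}{12|K|}\int_0^1 x^2(1-x)^2\,|h''|(dx),
\]
where $|h''|$ is the (nonnegative) measure $-h''$. Hence the bulk average is at least $\tfrac16$, with equality exactly when $h$ is affine.

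Next I will compute the boundary average. The boundary consists of the graphs of $f$ and $g$, together with possible vertical segments at $x=0,1$ of total length $h(0)+h(1)$, on which $\psi=0$. Let $s(x):=\sqrt{1+f'^2}+\sqrt{1+g'^2}-2\ge0$. Then
\[
\frac1{P(K)}\int_{\partial K}\psi
=\frac{\tfrac13+\int_0^1\psi s}{2+\int_0^1 s+h(0)+h(1)}
=\frac16-\frac{\tfrac16\bigl(h(0)+h(1)\bigr)+\int_0^1(\tfrac16-\psi)s}{P(K)}.
\]
The gap in \eqref{eq:moment-gap} is therefore bulk excess plus a boundary-deficit term. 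The only bad contribution is $\int(\psi-\tfrac16)_+s$, coming from the middle region where $\psi>\tfrac16$. There, i.e. on $[\delta,1-\delta]$, concavity and $0\le h\le 2|K|$ bound $|f'|,|g'|\lesssim |K|/\delta$, so this contribution is $O(|K|^2)$. It is absorbed once $|K|\le a_0$, provided the good terms are $\ge c|K|$. Near the endpoints, $\tfrac16-\psi\ge\tfrac1{12}$ on $[0,\delta]\cup[1-\delta,1]$. Also $s\gtrsim\min(|f'|+|g'|,\,(f'^2+g'^2))\gtrsim\min(|h'|,h'^2)$, so steep ends contribute a positive boundary gain.

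The main step, and the obstacle I expect, is a lower bound for the total gain
\[
\mathcal G(h):=\int x^2(1-x)^2|h''|+\bigl(h(0)+h(1)\bigr)+\int_{[0,\delta]\cup[1-\delta,1]}\min(|h'|,h'^2)
\]
by $c\int h$, uniformly over nonnegative concave $h$ with $\int h$ small. I plan to use the representation
\[
h(x)=h(0)(1-x)+h(1)x+\int G(x,t)\,|h''|(dt),\qquad G(x,t)=\min(x,t)\bigl(1-\max(x,t)\bigr),
\]
which gives $\int h=\tfrac12(h(0)+h(1))+\tfrac12\int t(1-t)|h''|(dt)$. The part of $|h''|$ on $[\delta,1-\delta]$ is controlled since $t(1-t)\le C_\delta t^2(1-t)^2$ there. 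The dangerous part is $|h''|$-mass near an endpoint with $h(0)$ small, i.e. a sharp peak near $x=0$. In that case $h$ rises from $\approx0$ to height $\sim m$ (where $m:=|K|$) over a length $\lesssim\delta$, so $|h'|\gtrsim m/\delta$ on a set of length comparable to the rise. Then the third term is $\gtrsim\min(m,\,m^2/\delta)$, which is comparable to $m$ once I choose $\delta\sim m$ adaptively, or argue by cases. I would first try splitting into the case where the mass $\int_0^\delta t|h''|$ dominates and directly estimating the slope of $h$ there. Collecting the pieces, and using $2\le P(K)\le 3$ for small $|K|$, should yield \eqref{eq:moment-gap}.
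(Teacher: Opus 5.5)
\textbf{There is a genuine gap.} Your reduction is correct and follows the paper's skeleton:
\begin{itemize}
\item your bulk identity is exactly Lemma~\ref{lem:tent} (the tent measure is $d\beta=-t(1-t)H''\,dt$);
\item your formula for the boundary average is exact;
\item your $O(|K|^2)$ bound on $\int(\psi-\frac16)_+s$ is a localized version of the paper's $B-\frac13\le C_Ba^2$.
\end{itemize}
Incidentally $h(0)=h(1)=0$: a point $(0,y)$ with $y\neq0$ lies at distance $>1$ from $(1,0)$, so there are no vertical sides.

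The gap is the step you flag as the main one. It is not carried out, and as you state it, it is false. In your gap formula the bulk gain carries the factor $\frac{1}{12|K|}$, but $\mathcal G$ drops it. Take the tent $h=2mT_t$ of Lemma~\ref{lem:tent} with $t=\sqrt m$, so $\int h=m$ and $|h''|$ is a point mass $\frac{2m}{t(1-t)}$ at $t$. Then $\int x^2(1-x)^2|h''|=2mt(1-t)\approx 2m^{3/2}$, the slope term is $4m^{3/2}+O(m^2)$, and $h(0)+h(1)=0$. Hence $\mathcal G(h)\sim m^{3/2}\ll\int h$, even though the true bulk gain here is $\approx t/6=\sqrt m/6$.

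The normalization is not cosmetic. With an endpoint layer of width $\ell\sim m$ (your ``adaptive $\delta$''), controlling the $|h''|$-mass on $[\ell,1-\ell]$ through $t(1-t)\le C_\ell\,t^2(1-t)^2$ costs $C_\ell\sim 1/m$, and only the $1/|K|$ factor pays for it. Also, $\min(|a|,a^2)$ is not convex, so ``$|h'|\gtrsim m/\delta$ on a set comparable to the rise'' still needs an actual argument.

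\textbf{How to close it (this is what the paper does).} Keep your fixed $\delta$ for the weight $\frac16-\psi\ge\frac1{12}$, and set $\ell=Rm\le\delta$. Since $h(0)=h(1)=0$, we have $\int_0^1 t(1-t)|h''|(dt)=2m$. There are two cases.
\begin{itemize}
\item If $\int_{[\ell,1-\ell]}t(1-t)|h''|\ge m$, then $t^2(1-t)^2\ge\frac{\ell}{2}t(1-t)$ there, so the bulk gain is at least $\frac{1}{12m}\cdot\frac{\ell}{2}\cdot m=\frac{Rm}{24}$.
\item Otherwise, say $\int_{[0,\ell]}t(1-t)|h''|\ge\frac m2$. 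Your Green representation then gives $h(\ell)\ge(1-\ell)\int_{[0,\ell]}t\,|h''|(dt)\ge\frac m4$. Now use the convex function $\phi(a)=\sqrt{1+a^2}-1$: pointwise $s\ge 2\phi(|h'|/2)$, and Jensen yields $\int_0^\ell s\ge 2\ell\,\phi\bigl(h(\ell)/(2\ell)\bigr)\ge 2Rm\,\phi\bigl(\tfrac{1}{8R}\bigr)$, a fixed multiple of $m$.
\end{itemize}
Either way the gain is $\ge cm$, which absorbs the $O(m^2)$ loss.

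The paper runs the same dichotomy as $\eta\ge Ma$ versus $\eta\le Ma$ with $\ell=R_Ma$, and derives $h(\ell)\ge a/2$ or $h(1-\ell)\ge a/2$. It turns this into a global perimeter excess $P-2\ge c_Ma$ via the chords $\sqrt{\ell^2+p(\ell)^2}+\sqrt{\ell^2+q(\ell)^2}-2\ell$ (Jensen in disguise). It pairs this with the global bound $B\le\frac13+C_Ba^2$ rather than your weighted localization. Once completed, your version is slightly more direct, since any fixed $R$ works because the loss is already $O(m^2)$. As submitted, however, the decisive estimate is missing.
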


	Let $K$ be as in Proposition~\ref{prop:gap}.  Then we can write
	\[
	K=\{(x,y):0<x<1,\ -q(x)<y<p(x)\},
	\]
	where $p,q\ge0$ are concave and vanish at $0$ and $1$. By standard approximation and convergence results, it is enough to prove Proposition~\ref{prop:gap} for $p,q \in C^2(0,1)\cap C[0,1]$. Throughout this section, we assume such regularity for $p, q$.
	
	\begin{lemma}\label{lem:tent}
		Let $K$ and $\psi$ be as above. Then there exists a nonnegative Radon measure $\beta$ on $(0,1)$ such that 
		\begin{equation}\label{eq:beta-mass}
			\beta([0,1])=2,
		\end{equation}
		and
		\begin{align}
			\label{eq:psi-H-identity}
			\frac{1}{|K|}\int_K \psi \, dxdy=\frac{1}{6}+\frac{1}{12}\int_0^1 t(1-t) d\beta(t).
		\end{align}
		In particular,
		\begin{equation}\label{eq:moment-ineq}
			\frac{1}{|K|}\int_K \psi \, dxdy\ge \frac{1}{6}.
		\end{equation}
	\end{lemma}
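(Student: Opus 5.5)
Let $h(x):=p(x)+q(x)\ge0$ be the width function of $K$. It is concave on $[0,1]$ and vanishes at $0$ and $1$, and $\int_0^1 h\,dx=|K|$. Integrating first in $y$ gives
\[
\frac{1}{|K|}\int_K\psi\,dx\,dy=\frac{\int_0^1 \psi(x)h(x)\,dx}{\int_0^1 h(x)\,dx}.
\]
The plan is to write $h$ as a superposition of tent functions. We use the Green's function of $-\frac{d^2}{dx^2}$ on $(0,1)$ with Dirichlet conditions, $G(x,t)=x(1-t)$ for $x\le t$ and $G(x,t)=t(1-x)$ for $x\ge t$. Set $\mu:=-h''\ge0$, a nonnegative Radon measure on $(0,1)$ by concavity (it is a $C^0$ density in the $C^2$ case assumed). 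Then $h(x)=\int_0^1 G(x,t)\,d\mu(t)$.

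We compute the two moments of a single tent $G(\cdot,t)$. First, $\int_0^1 G(x,t)\,dx=\tfrac{t(1-t)}{2}$. Second, a direct polynomial computation should give
\[
\int_0^1 \psi(x)G(x,t)\,dx=\frac{t(1-t)\,(1+t-t^2)}{12}.
\]
As a check, this is the solution $w$ of $-w''=\psi$ with $w(0)=w(1)=0$, namely $w(t)=\tfrac{t}{12}-\tfrac{t^3}{6}+\tfrac{t^4}{12}=\tfrac{t(1-t)(1+t-t^2)}{12}$. By Fubini,
\[
\int_0^1\psi h=\frac1{12}\int_0^1 t(1-t)\bigl(1+t(1-t)\bigr)d\mu(t),\qquad
\int_0^1 h=\frac12\int_0^1 t(1-t)\,d\mu(t).
\]

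Now define the normalized measure
\[
d\beta(t):=\frac{t(1-t)\,d\mu(t)}{\int_0^1 h\,dx}.
\]
It is nonnegative, and $\beta([0,1])=\beta((0,1))=2$ by the second identity. Dividing the first identity by $|K|=\int_0^1 h$ gives
\[
\frac1{|K|}\int_K\psi=\frac{1}{12}\cdot 2+\frac1{12}\int_0^1 t(1-t)\,d\beta(t)=\frac16+\frac1{12}\int_0^1 t(1-t)\,d\beta(t),
\]
which is \eqref{eq:psi-H-identity}. Since the integrand is nonnegative, \eqref{eq:moment-ineq} follows immediately.

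The main technical point is justifying the representation $h=\int G\,d\mu$ with possibly unbounded total mass of $\mu$ near the endpoints, where $h'$ may blow up. This is handled because only the weighted mass $t(1-t)\,d\mu$ enters, and it is finite and equal to $2|K|$. One can argue on $[\epsilon,1-\epsilon]$ and pass to the limit by monotone convergence, using $h(0)=h(1)=0$ and continuity. Alternatively, as the paper allows, work with the $C^2$ approximants and integrate by parts twice: $\int_0^1\psi h=\int_0^1 w(-h'')$, where the boundary terms vanish because $w$ and $h$ vanish at the endpoints and $w'h$, $wh'$ tend to $0$ there ($w=O(t)$ and $th'(t)\to0$ by concavity). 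Finally, the statement places $\beta$ on $(0,1)$ while writing its mass as $\beta([0,1])$; these agree since $\beta$ charges no endpoints.
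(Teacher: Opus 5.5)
Your proposal is correct and is essentially the paper's proof. Your measure $d\beta=t(1-t)\,d\mu/|K|$ with $\mu=-h''$ is exactly the paper's $d\beta=-t(1-t)H''\,dt$, and your moment $\int_0^1\psi\,G(\cdot,t)\,dx=t(1-t)(1+t-t^2)/12$ is the paper's $\int_0^1\psi\,T_t\,dx=(1+t(1-t))/12$ multiplied by $t(1-t)$. The only addition is your justification of the Green representation near the endpoints (using $t\,h'(t)\to0$ by concavity), which the paper asserts without proof.
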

	
	\begin{proof}
		Let $h=p+q$ and $|K|=a$,  then 
		\[
		h(0)=h(1)=0,
		\qquad
		\int_0^1h(x)\,dx=a>0.
		\]
		Set $H(x)=h(x)/a$.  We claim that there exists a nonnegative Radon measure $\beta$ on $(0,1)$ such that
		\begin{equation}\label{eq:tent-representation}
			H(x)=\int_0^1T_t(x)\,d\beta(t),
		\end{equation}
		where
		\[
		T_t(x)=
		\begin{cases}
			\dfrac{x}{t},&0\le x\le t,\\[2mm]
			\dfrac{1-x}{1-t},&t\le x\le1.
		\end{cases}
		\]	
		Indeed, recall that the Green function for the operator $-\tfrac{d^2}{dx^2}$ on $(0,1)$  satisfies 
		\[
		G(x,t)=
		\begin{cases}
			x(1-t),&x\le t,\\
			t(1-x),&x> t.
		\end{cases}
		\]
		Then
		\[
		H(x)=-\int_0^1G(x,t)H''(t)\,dt.
		\]
		Since
		\[
		G(x,t)=t(1-t)T_t(x),
		\]
		we obtain \eqref{eq:tent-representation} by setting
		\[
		d\beta(t)=-t(1-t)H''(t)\,dt,
		\]
		which is a nonnegative Radon measure.
		
		By \eqref{eq:tent-representation} and the identity
		\[
		\int_0^1T_t(x)\,dx=\frac12,
		\]
		Fubini's theorem   provides 
		\[
		1=\int_0^1H(x)\,dx
		=\int_0^1\left(\int_0^1T_t(x)\,dx\right)d\beta(t)
		=\frac12\beta([0,1]),
		\]
		so \eqref{eq:beta-mass} follows.
		
		Finally, a direct computation yields
		\[
		\begin{aligned}
			\frac{1}{|K|}\int_K \psi\, dxdy&=\int_0^1\psi H \,dx
			=\int_0^1\left(\int_0^1\psi(x)T_t(x)\,dx\right)d\beta(t)\\
			&=\int_0^1\frac{1+t(1-t)}{12}\,d\beta(t)\\
			&=\frac1{12}\beta([0,1])+\frac1{12}\int_0^1t(1-t)\,d\beta(t)\\
			&=\frac16+\frac1{12}\int_0^1t(1-t)\,d\beta(t).
		\end{aligned}
		\]
		This completes the proof.
	\end{proof}
	
	The next lemma shows, roughly speaking, that if the average of $\psi$ over $K$ exceeds $1/6$ by at most $O(|K|)$, then $K$ must develop nontrivial boundary layers near the endpoints of a diameter, producing a perimeter excess of order $|K|$.

	\begin{lemma}\label{lem:endpoint}
		Let $K$ be as in Lemma~\ref{lem:tent} with $|K|=a$. Set 
		\[
		I=\int_K\psi(x)\,dxdy,\qquad 
		B=\int_{\partial K}\psi(x)\,d\sigma,
		\qquad
		P=P(K).
		\]
		Then there is an absolute constant $C_B$ such that
		\begin{equation}\label{eq:B-estimate}
			0\le B-\frac13\le C_Ba^2.
		\end{equation}
		Furthermore, for  fixed $M>0$ there are constants $c_M>0$ and $a_M>0$ such that, if $0<a\le a_M$ and
		\begin{equation}\label{eq:near-worst}
			\frac Ia-\frac16\le Ma,
		\end{equation}
		then
		\begin{equation}\label{eq:P-excess}
			P-2\ge c_Ma.
		\end{equation}
	\end{lemma}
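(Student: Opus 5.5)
The plan is to compute $B$ directly from the graph parametrization
\[
\partial K=\{y=p(x)\}\cup\{y=-q(x)\},\qquad 0\le x\le 1.
\]
This gives
\[
B=\int_0^1\psi(x)\Bigl(\sqrt{1+p'(x)^2}+\sqrt{1+q'(x)^2}\Bigr)\,dx .
\]
Since $\sqrt{1+s^2}\ge 1$ and $2\int_0^1\psi=\tfrac13$, the lower bound $B\ge \tfrac13$ is immediate. For the upper bound I use $\sqrt{1+s^2}-1\le \tfrac12 s^2$ and must bound $\int_0^1\psi\,(p')^2$ by $Ca^2$, and likewise for $q'$.

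To bound $\int_0^1\psi (p')^2$, integrate by parts. Because $\psi p$ vanishes at the endpoints,
\[
\int_0^1\psi(p')^2=-\int_0^1 p\,(\psi p')'=-\int_0^1 p\,\psi' p'-\int_0^1 p\,\psi p'' .
\]
The first term equals $-\tfrac12\int (p^2)'\psi'=\tfrac12\int p^2\psi''=-\int p^2\le 0$. The second is $\int p\,\psi(-p'')$ with $-p''\ge0$. Concavity and $\int p\le a$ give $p\le 2a$ on $[0,1]$ (the triangle bound, since $p(x)\ge \max p\cdot T_t(x)$). Hence
\[
\int_0^1 p\,\psi\,(-p'')\le 2a\int_0^1\psi\,(-p'')=2a\int_0^1 2p=4a\int_0^1 p\le 4a^2 .
\]
Here I integrated by parts twice and used $\psi''=-2$ together with $\psi(0)=\psi(1)=0$. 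The boundary terms involving $\psi p'$ vanish because $\psi p'\to0$ at the endpoints: $p'$ may blow up at most like $o(1/x)$ for concave $p$, and if needed I approximate. This gives $B-\tfrac13\le C_B a^2$ with $C_B$ around $4$.

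For the second part I use Lemma~\ref{lem:tent}. The hypothesis \eqref{eq:near-worst} gives $\int t(1-t)\,d\beta\le 12Ma$, so most of the mass $2$ of $\beta$ sits at $t<\eta$ or $t>1-\eta$, with $\eta\sim\sqrt{Ma}$ or even $\sim Ma$. Then $h=aH$ is close to the tent profile concentrated near the endpoints, so $h$ rises steeply near $x=0$ or $x=1$. The perimeter excess satisfies
\[
P-2=\int_0^1\Bigl(\sqrt{1+p'^2}-1+\sqrt{1+q'^2}-1\Bigr)\,dx,
\]
and since $\sqrt{1+s^2}-1\ge c\min(s^2,|s|)$, it suffices to find an interval of length $\ell$ near an endpoint on which $|p'|+|q'|\gtrsim a/\ell$ with $a/\ell\gtrsim1$. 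Concretely, without loss of generality at least mass $1$ of $\beta$ lies in $[0,\eta]$. Then $H(x)\ge \int_{[0,\eta]}T_t\,d\beta\ge (1-x)/(1-\eta)\cdot 1$ for $x\ge\eta$, so $h(\eta)\gtrsim a$ while $h(0)=0$. The slope $h'$ averages about $a/\eta$ over $[0,\eta]$. With $\eta\le C_M a$ this slope is at least about $1/C_M$, so the arc length over $[0,\eta]$ exceeds $\eta$ by at least $c\,\eta\min(1,(a/\eta)^2)\gtrsim c_M a$. Since the arc length elsewhere is at least the horizontal length, this gives $P-2\ge c_Ma$.

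The main obstacle is the scale of $\eta$. The crude bound $\int t(1-t)\,d\beta\le 12Ma$ only localizes the mass to $t\lesssim Ma$, using $t(1-t)\ge t/2$ and Chebyshev. It therefore gives mass $\ge 1$ in $t\le 48Ma$ (or near $1$), which is exactly the regime $\eta\sim a$ needed above. So I expect this to close. The remaining delicacy is splitting $h$ between $p$ and $q$, but convexity of $\sqrt{1+s^2}$ gives
\[
\sqrt{1+p'^2}+\sqrt{1+q'^2}\ge 2\sqrt{1+(h'/2)^2},
\]
which reduces everything to $h$. Finally, $a_M$ is chosen so that $48Ma_M<\tfrac12$.
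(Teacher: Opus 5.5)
Your proposal is correct and follows essentially the paper's route. For \eqref{eq:B-estimate} you use the graph formula, $\sqrt{1+s^2}-1\le s^2/2$, and concavity; your double integration by parts giving $\int_0^1\psi\,(p')^2\le 4a\int_0^1 p$ replaces the paper's pointwise bound $s\phi'(s)\le\phi(s)$ but gives the same $O(a^2)$ control. For \eqref{eq:P-excess} you apply Chebyshev to the tent measure $\beta$ at scale $\ell\sim Ma$, obtain a height bound $h(\ell)\gtrsim a$, and use convexity of $s\mapsto\sqrt{1+s^2}$, where your Jensen step is the paper's chord-length bound.

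Two small slips do not affect the conclusion. First, Chebyshev only guarantees mass at least $3/4$ (with $\eta=48Ma$), not $1$, on one endpoint side. Second, for $t\le\eta\le x$ one has $T_t(x)=(1-x)/(1-t)\ge 1-x$, not $\ge (1-x)/(1-\eta)$; this still gives $h(\eta)\ge c\,a$.
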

	
	\begin{proof}
		Observe that  $h:=p+q$ is concave and $h(0)=h(1)=0$. Comparison with the area yields 
		\begin{equation}\label{eq:pqmax}
			0\le p,q\le h\le2a.
		\end{equation}
		
		We now prove \eqref{eq:B-estimate}.  It follows from the definition of $B$ that 
		\begin{equation}\label{eq:B-minus}
			\begin{aligned}
				B-\frac13&=B-2\int_0^1\psi(x)\,dx\\
				&= \int_0^1\psi(x)\left(\sqrt{1+p'(x)^2}-1\right)dx+\int_0^1\psi(x)\left(\sqrt{1+q'(x)^2}-1\right)dx.
			\end{aligned}
		\end{equation}
		The right-hand side is nonnegative.
		We exploit the following elementary estimate: if $\phi\ge0$ is $C^2$ and concave on $[0,1]$, $\phi(0)=\phi(1)=0$, then
		\begin{equation}\label{eq:weighted-length-excess}
			\int_0^1x(1-x)\left(\sqrt{1+\phi'(x)^2}-1\right)dx
			\le \frac{1}{2} \|\phi\|_{L^\infty} ^2.
		\end{equation}
		Indeed, let $m$ be a point where $\phi$ attains its maximum. Then for $s\in [0,m]$, $\phi'(s)\ge0$ is nonincreasing.  Inequalities $\sqrt{1+\phi'(s)^2}-1\le \phi'(s)^2/2$ and $s(1-s)\le s$  yields 
		\[
		\begin{aligned}
			\int_0^m s(1-s)(\sqrt{1+\phi'(s)^2}-1)\,ds
			&\le \frac12\int_0^m s \phi'(s)^2\,ds\leq \frac{1}{2}\int_{0}^{m}\phi(s)\phi'(s)ds,
		\end{aligned}
		\]
		where we used the concavity of $\phi$ in the last inequality. The displayed integral is therefore at most $\|\phi\|_{L^\infty}^2/4$. The interval $[m,1]$ is handled in the same way after replacing $x$ by $1-x$ and contributes at most another $\|\phi\|_{L^\infty}^2/4$.  These two  observations imply \eqref{eq:weighted-length-excess}. Applying it to $p$ and $q$, we derive from \eqref{eq:pqmax}  that  \eqref{eq:B-estimate} holds.
		
		It remains to prove \eqref{eq:P-excess}. Lemma~\ref{lem:tent}  shows that  there is a nonnegative measure $\beta$ such that
		\[
		H(x)=\int_0^1T_t(x)\,d\beta(t),
		\qquad
		\beta([0,1])=2,
		\]
		and
		\[
		\frac Ia=\int_0^1\psi H
		=\frac16+\frac1{12}\int_0^1t(1-t)\,d\beta(t).
		\]
		The assumption \eqref{eq:near-worst}  entails 
		\begin{equation}\label{eq:beta-endpoint-small}
			\int_0^1t(1-t)\,d\beta(t)
			\le 12Ma.
		\end{equation}
		Choose $a_M=\tfrac{1}{800(M+1)}$ and $\ell=R_M a$ with $R_M=100(M+1)$.
		Then for $a\le a_M$,  one has  $$\ell<1/4.$$  Let
		\[
		\bar L=[0,\ell],
		\qquad
		\bar R=[1-\ell,1],
		\qquad
		\bar C=[\ell,1-\ell].
		\]
		On the interval $\bar C$, one has $$t(1-t)\ge \ell(1-\ell)\ge\ell/2.$$ Therefore \eqref{eq:beta-endpoint-small} yields
		\[
		\beta(\bar C)\le\frac{12Ma}{\ell/2}=\frac{24M}{R_M}\le\frac14.
		\]
		$\beta([0,1])=2$  means 
		\begin{equation}\label{eq:endpoint-beta-mass}
			\beta(\bar L)+\beta(\bar R)\ge\frac74.
		\end{equation}
		Moreover, we deduce that \[
		H(\ell)+H(1-\ell)
		\ge \frac34\bigl(\beta(\bar L)+\beta(\bar R)\bigr)
		\ge\frac{21}{16},
		\]
		because $T_t(\ell) \ge\frac34$ for $t\in \bar L$ and $T_t(1-\ell)\ge\frac34$ for $t\in \bar R$.

		We see that  at least one of the following is true:
		\begin{equation}\label{eq:height-endpoint}
			h(\ell)\ge\frac a2
			\qquad\text{or}\qquad
			h(1-\ell)\ge\frac a2.
		\end{equation}
		Suppose the first one holds. Then $p(\ell)+q(\ell)\ge a/2$. The upper boundary over $[0,\ell]$ has length at least $\sqrt{\ell^2+p(\ell)^2}$, and the lower boundary over $[0,\ell]$ has length at least $\sqrt{\ell^2+q(\ell)^2}$. Hence the contribution of this endpoint layer to $P-2$ is bounded below by
		\[
		\sqrt{\ell^2+p(\ell)^2}
		+\sqrt{\ell^2+q(\ell)^2}
		-2\ell.
		\]
		Since $z\mapsto\sqrt{\ell^2+z^2}$ is convex,  we find
		\[
		\begin{aligned}
			\sqrt{\ell^2+p(\ell)^2}
			+\sqrt{\ell^2+q(\ell)^2}
			-2\ell
			&\ge
			2\sqrt{\ell^2+\left(\frac a4\right)^2}-2\ell\\
			&=2a\left(\sqrt{R_M^2+\frac1{16}}-R_M\right).
		\end{aligned}
		\]
		Thus \eqref{eq:P-excess} holds with
		\[
		c_M=2\left(\sqrt{R_M^2+\frac1{16}}-R_M\right)>0.
		\]
		The second case $h(1-\ell)\ge a/2$ is identical  by using the right endpoint.
	\end{proof}
	
	We are now ready to prove Proposition~\ref{prop:gap}.

	\begin{proof}
		We adopt the notation of Lemma~\ref{lem:endpoint}.  By Lemma~\ref{lem:tent}, we set
		\[
		\frac Ia=\frac16+\eta,
		\qquad
		\eta\ge0.
		\]
		Lemma~\ref{lem:endpoint}   shows 
		\[
		B\le\frac13+C_Ba^2.
		\]
		Fix
		$
		M=2C_B+1.
		$
		Then we split into two cases.
		
		If $\eta\ge Ma$, then $P\ge2$ and hence
		\[
		\frac BP\le\frac{1/3+C_Ba^2}{2}
		=\frac16+\frac{C_B}{2}a^2.
		\]
		Moreover, for  sufficiently small  $a$,
		\[
		\frac Ia-\frac BP
		\ge Ma-\frac{C_B}{2}a^2
		\ge c a.
		\]
		
		If $\eta\le Ma$, then Lemma~\ref{lem:endpoint} also implies 
		\[
		P\ge2+c_Ma.
		\]
		Therefore, for $a$ sufficiently small,
		\[
		\frac BP
		\le\frac{1/3+C_Ba^2}{2+c_Ma}
		\le \frac16-\frac{c_M}{24}a.
		\]
		We further  obtain
		\[
		\frac Ia-\frac BP\ge\frac{c_M}{24}a.
		\]
		The proof is finished.
	\end{proof}
	
	With the establishment of Proposition \ref{prop:gap}, we now give the proof of Lemma \ref{lem:thin-lower}.
	
	\begin{proof}[Proof of Lemma \ref{lem:thin-lower}]
		Assume the statement is false. Then there exist convex domains $K_j$ such that
		\[
		\diam(K_j)=1,
		\qquad
		|K_j|=a_j\to0,
		\]
		and their Neumann torsion functions $v_j$ satisfy
		\begin{equation}\label{eq:osc-counter}
			\osc_{\partial K_j}v_j=o(a_j).
		\end{equation}
		After a rigid motion, choose diametral points $(0,0)$ and $(1,0)$. Then
		\[
		K_j=\{(x,y):0<x<1,\ -q_j(x)<y<p_j(x)\},
		\]
		where $p_j,q_j\ge0$ are concave and vanish at $0$ and $1$. Set $h_j=p_j+q_j$. Then
		\[
		\int_0^1h_j=a_j,
		\qquad
		\|h_j\|_{L^\infty} \le2a_j.
		\]
		Hence $K_j$ is contained in a strip of width $O(a_j)$.
		
		Let $\tau_j$ be the Dirichlet torsion function on $K_j$:
		\[
		\begin{cases}
			-\Delta\tau_j=1&\text{in }K_j,\\
			\tau_j=0&\text{on }\partial K_j.
		\end{cases}
		\]
		By comparison with the torsion function of a strip of width $C a_j$,
		\begin{equation}\label{eq:dirichlet-strip-barrier}
			0\le\tau_j\le C a_j^2.
		\end{equation}
		The difference $v_j-\tau_j$ is harmonic in $K_j$ and has boundary values equal to $v_j$. The maximum principle  provides 
		\[
		\osc_{K_j}(v_j-\tau_j)
		\le\osc_{\partial K_j}v_j=o(a_j).
		\]
		Together with \eqref{eq:dirichlet-strip-barrier}, this implies
		\begin{equation}\label{eq:global-osc-vj}
			\osc_{K_j}v_j=o(a_j).
		\end{equation}
		Since $v_j$ is defined up to an additive constant, we choose this constant so that
		\begin{equation}\label{eq:vj-Linfty-small}
			\|v_j\|_{L^\infty(K_j)}=o(a_j).
		\end{equation}
		
		Now take
		\[
		\psi(x)=x(1-x).
		\]
		The weak formulation   shows 
		\begin{equation}\label{eq:test-psi-weak}
			\int_{K_j}\nabla v_j\cdot\nabla\psi\,dxdy
			=
			\int_{K_j}\psi\,dxdy
			-\frac{a_j}{P_j}\int_{\partial K_j}\psi\,d\sigma.
		\end{equation}
		On the other hand, integrating by parts once, one has 
		\begin{equation}\label{eq:ibp-psi}
			\int_{K_j}\nabla v_j\cdot\nabla\psi\, dxdy
			=
			\int_{\partial K_j}v_j\psi'(x)\nu_x\,d\sigma
			-\int_{K_j}v_j\psi''(x)\,dxdy.
		\end{equation}
		We have $|\psi'|\le1$ and $|\psi''|=2$. Also, by convexity of $K_j$ and projection formula,
		\[
		\int_{\partial K_j}|\nu_x|\,d\sigma
		\le 2\|p_j\|_\infty+2\|q_j\|_\infty
		\le C a_j.
		\]
		Thus \eqref{eq:vj-Linfty-small} and \eqref{eq:ibp-psi} imply
		\[
		\left|\int_{K_j}\nabla v_j\cdot\nabla\psi\, dxdy\right|
		\le o(a_j)(Ca_j+2a_j)=o(a_j^2).
		\]
		It follows 	from \eqref{eq:test-psi-weak} that
		\begin{equation}\label{eq:moment-counter}
			\frac{1}{|K_j|}\int_{K_j}\psi\, dxdy - \frac{1}{P(K_j)}\int_{\partial K_j}\psi \, d\sigma=o(a_j).
		\end{equation}
		This contradicts Proposition~\ref{prop:gap}, and thus Lemma~\ref{lem:thin-lower} is proved.
	\end{proof}

	\section{Qualitative stability result: proof of Theorem \ref{thm:disk-conv}}
	\label{sec:qualitative-stability}
	We will combine the diameter compactness with a weak version of Serrin's overdetermined theorem to prove our qualitative stability result mentioned in the introduction.
	
	First, we   prove the following  diameter compactness result.
	
	\begin{theorem}\label{thm:diam-bound}
		Let $\Omega_k\subset\R^2$ be bounded convex domains with
		\[
		|\Omega_k|=\pi,
		\]
		and let $u_k$ be the normalized Neumann torsion function on $\Omega_k$, in the sense that $\int_{\partial \Omega_k}u_k\,d\sigma=0$. If
		\[
		O(\Omega_k)=\osc_{\partial\Omega_k}u_k\to0,
		\]
		then
		\[
		\sup_k\diam(\Omega_k)<\infty.
		\]
	\end{theorem}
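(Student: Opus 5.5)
We argue by contradiction and use the scaling behaviour of the Neumann torsion function to reduce to Lemma~\ref{lem:thin-lower}. Suppose, after passing to a subsequence, that $d_k:=\diam(\Omega_k)\to\infty$ while $|\Omega_k|=\pi$ and $O(\Omega_k)\to0$. Set $K_k:=d_k^{-1}\Omega_k$. Then $K_k$ is a bounded convex domain with $\diam(K_k)=1$ and
\[
a_k:=|K_k|=\pi d_k^{-2}\to0 .
\]

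Next we check how the Neumann torsion function transforms under the dilation $x=d_k y$. Define $v_k(y):=d_k^{-2}u_k(d_k y)$ on $K_k$.
\begin{itemize}
\item In the interior, $-\Delta_y v_k=(-\Delta u_k)(d_k y)=1$.
\item On the boundary, $\partial_\nu v_k=d_k^{-1}\partial_\nu u_k=-d_k^{-1}\,|\Omega_k|/P(\Omega_k)$.
\item Since $|K_k|=d_k^{-2}|\Omega_k|$ and $P(K_k)=d_k^{-1}P(\Omega_k)$, we get $|K_k|/P(K_k)=d_k^{-1}|\Omega_k|/P(\Omega_k)$, so $\partial_\nu v_k=-|K_k|/P(K_k)$.
\end{itemize}
The same identities hold for the weak formulation \eqref{eq:weak-form}, because test functions rescale the same way. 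Hence $v_k$ is exactly a solution of the problem in Lemma~\ref{lem:thin-lower} on $K_k$. Moreover,
\[
\osc_{\partial K_k}v_k=d_k^{-2}\,O(\Omega_k).
\]

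For $k$ large we have $a_k\le a_0$, so Lemma~\ref{lem:thin-lower} gives
\[
d_k^{-2}O(\Omega_k)=\osc_{\partial K_k}v_k\ge c_0a_k=c_0\pi d_k^{-2}.
\]
Therefore $O(\Omega_k)\ge c_0\pi$ for all large $k$, which contradicts $O(\Omega_k)\to0$. The argument in fact yields a quantitative statement: $O(\Omega)<c_0\pi$ forces $\diam(\Omega)\le\sqrt{\pi/a_0}$.

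There is no serious obstacle here; the analytic content is all in Lemma~\ref{lem:thin-lower}. The only care needed is the bookkeeping of the scaling. The factor $d_k^{-2}$ on $v_k$ must match both the source term and the normalized Neumann constant, which it does because $|K|/P(K)$ scales linearly. Also, the oscillation is independent of the additive normalization, so the condition $\int_{\partial\Omega_k} u_k\,d\sigma=0$ plays no role.
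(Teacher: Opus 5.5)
Your proposal is correct and follows essentially the same route as the paper: argue by contradiction, rescale by the diameter so that $|K_k|=\pi d_k^{-2}\to0$ and $O(K_k)=d_k^{-2}O(\Omega_k)$, and contradict Lemma~\ref{lem:thin-lower}. Your explicit check that the Neumann constant $|K|/P(K)$ scales correctly, and your quantitative remark that $O(\Omega)<c_0\pi$ forces $\diam(\Omega)\le\sqrt{\pi/a_0}$, are both accurate.
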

	
	\begin{proof}
		Suppose, by contradiction, that along a subsequence
		\[
		L_k=\diam(\Omega_k)\to\infty.
		\]
		Define the rescaled domains
		\[
		K_k=\frac1{L_k}\Omega_k.
		\]
		Then
		\[
		\diam(K_k)=1,
		\qquad
		|K_k|=a_k=\frac{\pi}{L_k^2}\to0.
		\]
		If $v_k$ denotes the normalized Neumann torsion function on $K_k$, then the scaling relation is
		\[
		v_k(x)=\frac1{L_k^2}u_k(L_kx)
		\]
		up to an additive constant. Hence
		\[
		O(K_k)=\frac1{L_k^2}O(\Omega_k).
		\]
		Consequently
		\[
		\frac{O(K_k)}{|K_k|}
		=
		\frac{L_k^{-2}O(\Omega_k)}{\pi L_k^{-2}}
		=
		\frac{O(\Omega_k)}{\pi}\to0.
		\]
		This contradicts Lemma~\ref{lem:thin-lower}. Therefore the diameters are uniformly bounded.
	\end{proof}
	We now proceed to prove Theorem~\ref{thm:disk-conv}.
	\begin{proof}[Proof of Theorem~\ref{thm:disk-conv}]
		By Theorem~\ref{thm:diam-bound}, the diameters of \(\Omega_k\) are uniformly bounded. After suitable translations and passage to a subsequence, the Blaschke selection theorem~\cite[Theorem 1.8.7]{Schneider2014} implies that there is a convex domain $\Omega_\infty$ such that 
		\[
		\Omega_k \longrightarrow \Omega_\infty
		\]
		in the Hausdorff topology, with all the domains contained in a fixed ball. Since the areas are fixed and positive, \(\Omega_\infty\) has nonempty interior, and convex-body convergence guarantees
		\begin{equation}\label{eq:area_per}
			\lvert\Omega_k\rvert\longrightarrow\lvert\Omega_\infty\rvert,
			\qquad
			P(\Omega_k)\longrightarrow P(\Omega_\infty).
		\end{equation}
		The fixed area and the uniform diameter bound imply a uniform positive lower bound for the inradii. Hence the convex domains \(\Omega_k\) have a uniform Lipschitz character and admit uniformly bounded Sobolev extension operators.
		
		Let \(u_k\) be the Neumann torsion function, and choose a constant \(m_k\) such that, with
		\[
		w_k:=u_k-m_k,
		\]
		one has
		\[
		0\leq w_k\leq O(\Omega_k)
		\qquad\text{on }\partial\Omega_k.
		\]
		Fix \(p>2\). By the \(W^{1,p}\) estimate introduced by Geng--Shen~\cite[Theorem~1.2]{GengShen}, the uniform Lipschitz geometry and the uniformly bounded data   provide 
		\[
		\lVert\nabla w_k\rVert_{L^p(\Omega_k)}\leq C_p.
		\]
		The boundary normalization, together with a uniform Poincar\'e--trace inequality, controls the additive constants. Hence
		\[
		\lVert w_k\rVert_{W^{1,p}(\Omega_k)}\leq C_p.
		\]
		Let \(\widetilde w_k\) be uniformly bounded extensions to a fixed ball \(B_R\) containing all \(\Omega_k\). After passing to a subsequence,
		\[
		\widetilde w_k\rightharpoonup \widetilde w_\infty
		\qquad\text{weakly in }W^{1,p}(B_R),
		\]
		and, by the compact Morrey embedding,
		\[
		\widetilde w_k\longrightarrow \widetilde w_\infty
		\qquad\text{uniformly on }\overline{B_R}.
		\]
		
		Set \(w_\infty:=\widetilde w_\infty\vert_{\Omega_\infty}\).
		For  fixed \(\varphi\in C^1(\mathbb R^2)\), the characteristic functions \(\chi_{\Omega_k}\) converge strongly to \(\chi_{\Omega_\infty}\) in  \(L^q\) space $(q<\infty)$, while convex-body convergence shows weak convergence of the boundary measures. Passing to the limit in
		\[
		\int_{\Omega_k}\nabla w_k\cdot\nabla\varphi\,dx
		=
		\int_{\Omega_k}\varphi\,dx
		-
		\frac{\lvert\Omega_k\rvert}{P(\Omega_k)}
		\int_{\partial\Omega_k}\varphi\,d\mathcal H^1
		\]
		yields
		\begin{equation}\label{eq:weak-serrin-limit}
			\int_{\Omega_\infty}\nabla w_\infty\cdot\nabla\varphi\,dx
			=
			\int_{\Omega_\infty}\varphi\,dx
			-
			\frac{\lvert\Omega_\infty\rvert}{P(\Omega_\infty)}
			\int_{\partial^*\Omega_\infty}\varphi\,d\mathcal H^1.
		\end{equation}
		Similar limit argument is also used in the first author's PhD thesis \cite[Chapter 4-5]{Li18}.
		
		It remains to identify the boundary trace. Given \(x\in\partial\Omega_\infty\), choose \(x_k\in\partial\Omega_k\) with \(x_k\to x\). The uniform convergence and the boundary normalization imply
		\[
		\widetilde w_\infty(x)
		=
		\lim_{k\to\infty}w_k(x_k)
		=0.
		\]
		Thus \(w_\infty=0\) on \(\partial\Omega_\infty\), and in particular
		\[
		w_\infty\in H^1_0(\Omega_\infty).
		\]
		Equation~\eqref{eq:weak-serrin-limit} is therefore the weak formulation of Serrin's overdetermined torsion problem with constant Dirichlet datum and constant normal derivative.
		
		By the rough-domain theorem of Figalli--Zhang~\cite{FigalliZhang2025}, \(\Omega_\infty\) is a ball. Since \(\lvert\Omega_\infty\rvert=\pi\), it is a unit disk. Thus every Hausdorff subsequential limit, after translation, is a unit disk, and consequently
		\[
		\inf_{z\in\mathbb R^2}d_H\bigl(\Omega_k,B_1(z)\bigr)\longrightarrow0.
		\]
	\end{proof}
	
	\begin{remark}[Rough rigidity as the compactness endpoint]
		The use of Figalli--Zhang is essential to the preceding compactness argument. Indeed, the limiting convex domain need not be smooth, and the passage to the limit yields only the weak overdetermined formulation \eqref{eq:weak-serrin-limit}. The classical smooth Serrin theorem therefore cannot be invoked directly, whereas the rough-domain theorem of Figalli--Zhang applies precisely at this level of regularity.
		
		More conceptually, the proof illustrates a useful route from rigidity to stability: once the oscillation  produces geometric precompactness and the associated PDE passes to the limit, a rigidity theorem formulated at the natural weak regularity of the limit identifies  every limit with vanishing boundary oscillation. In the present problem, the rough-domain Serrin theorem is exactly the result that closes this mechanism.
	\end{remark}

	\section{A quantitative consequence in the convex class}
	\label{sec:quantitative-full-convex}
	In this section, we establish a quantitative result in the convex class and produce the preliminary
	rate. Throughout this section, we assume that $\Omega\subset \mathbb{R}^{2}$ is a bounded convex domain with $|\Omega|=\pi$, and we write
	\[
	r_\Omega:=\sup\{r>0:\text{there exists }x\in\Omega\text{ with }B_r(x)\subset\Omega\}
	\]
	for its inradius and
	\[
	R_\Omega:=\inf\{R>0:\text{there exists }x\in\R^2\text{ with }\Omega\subset B_R(x)\}
	\]
	for its circumradius. The following compactness consequence will be used below.
	
	\begin{proposition}
		\label{prop:uniform-geometry}
		There exist $\delta_*>0$, $D_*>0$, and $r_*>0$ such that every convex domain
		$\Om\subset\R^2$ with $|\Om|=\pi$ and $O(\Om)\le\delta_*$ satisfies
		\begin{equation}\label{eq:uniform-geometry}
			\diam(\Om)\le D_*\qquad\text{and}\qquad r_\Om\ge r_*.
		\end{equation}
	\end{proposition}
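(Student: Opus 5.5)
The plan is a contradiction argument based on the compactness already built into Theorem~\ref{thm:diam-bound}, followed by an elementary convex-geometry inequality for the inradius.

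\emph{Diameter bound.} Suppose no pair $(\delta_*,D_*)$ works. Then for every $k$ there is a convex $\Omega_k$ with $|\Omega_k|=\pi$, $O(\Omega_k)\le 1/k$, and $\diam(\Omega_k)\ge k$. This sequence satisfies $O(\Omega_k)\to0$, so Theorem~\ref{thm:diam-bound} gives $\sup_k\diam(\Omega_k)<\infty$, which is a contradiction. Hence there are $\delta_*>0$ and $D_*$ such that $O(\Omega)\le\delta_*$ and $|\Omega|=\pi$ force $\diam(\Omega)\le D_*$.

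One could instead argue directly. Rescale to $K=\Omega/\diam(\Omega)$, so $|K|=\pi/\diam(\Omega)^2$. By the scaling relation in the proof of Theorem~\ref{thm:diam-bound}, $O(K)=O(\Omega)/\diam(\Omega)^2$. If $\diam(\Omega)^2\ge\pi/a_0$, then Lemma~\ref{lem:thin-lower} gives
\[
O(\Omega)/\diam(\Omega)^2\ge c_0\pi/\diam(\Omega)^2,
\]
that is, $O(\Omega)\ge c_0\pi$. So the choice $\delta_*<c_0\pi$ and $D_*=\sqrt{\pi/a_0}$ works explicitly. This route is cleaner, and I would present it.

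\emph{Inradius bound.} For a planar convex body, the width $w$ in a direction $\theta$ and the length of a chord orthogonal to $\theta$ satisfy
\[
|\Omega|\le w(\Omega)\,\diam(\Omega),
\]
because $\Omega$ lies in a strip of width $w$ and its projection onto the strip direction has length at most $\diam(\Omega)$. So the minimal width satisfies $w\ge\pi/D_*$. Then Steinhagen's inequality in the plane, $r_\Omega\ge w/3$ (the equilateral triangle is the extremal case), gives $r_\Omega\ge\pi/(3D_*)=:r_*$.

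If I did not want to cite Steinhagen, I would argue by compactness. Suppose convex sets with $\diam\le D_*$ and area $\pi$ had inradii tending to $0$. Blaschke selection gives a convex Hausdorff limit with area $\pi$, hence with nonempty interior and positive inradius. The inradius is continuous under Hausdorff convergence of convex bodies, which contradicts the inradii tending to $0$.

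The only real content is the first step, which rests entirely on Lemma~\ref{lem:thin-lower} through the scaling. The one point to check carefully is that the scaling of $O$ is exactly quadratic, and this holds because the normalized Neumann torsion function rescales as $v(x)=L^{-2}u(Lx)$ up to a constant.
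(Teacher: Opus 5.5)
Your proof is correct.

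\textbf{Diameter bound.} This is the paper's argument: a contradiction with Theorem~\ref{thm:diam-bound}. Your direct variant just unpacks the proof of that theorem, using Lemma~\ref{lem:thin-lower} and the quadratic scaling $O(L^{-1}\Omega)=L^{-2}O(\Omega)$. It has the small advantage of giving explicit constants, $\delta_*<c_0\pi$ and $D_*=\sqrt{\pi/a_0}$, rather than bare existence.

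\textbf{Inradius bound.} Here you use a different elementary inequality from the paper.
\begin{itemize}
\item The paper writes $\pi=|\Omega|\le r_\Omega P(\Omega)\le \pi r_\Omega\diam(\Omega)$. This combines a Bonnesen-type bound with $P\le\pi\diam$ for planar convex sets, and gives $r_*=1/D_*$.
\item You bound the minimal width by $w\ge\pi/D_*$ via $|\Omega|\le w\,\diam(\Omega)$, then invoke $r_\Omega\ge w/3$. This gives $r_*=\pi/(3D_*)$.
\end{itemize}
Both are one-line consequences of classical planar convex geometry, and either suffices.

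Your compactness fallback is also valid: Blaschke selection, preservation of area in the limit, and continuity of the inradius under Hausdorff convergence of convex bodies. It yields no explicit constant, however.
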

	
	\begin{proof}
		The diameter upper bound follows from Theorem~\ref{thm:diam-bound}. Indeed, if no such universal bound $D_*$ existed,  there would exist a sequence of convex domains $\Omega_j$ such that
		\begin{equation*}
			O(\Omega_j)\to0  \qquad \text { as } \qquad\diam(\Omega_j)\to\infty
		\end{equation*}
		contradicting Theorem~\ref{thm:diam-bound}.   Some elementary calculations show  
		\begin{equation}\label{eq:r-upbad}
			\begin{aligned}
				\pi&=|\Om|\le r_\Om P(\Om)\le \pi  r_\Om \diam(\Om)\le \pi D_*  r_\Om .
			\end{aligned}
		\end{equation}
		Then the universal positive lower bound $r_*$ follows.
	\end{proof}

	For sufficiently small $O(\Omega)$, we have the following tame quantitative result. 
	\begin{theorem}\label{thm:main}
		There exist constants $C>1$ and $0<\delta_0\le\delta_*$, depending only on
		the constants in \eqref{eq:uniform-geometry}, such that every bounded convex domain $\Om\subset\R^2$ with $|\Om|=\pi$ and
		$\delta=O(\Om)\le\delta_0$ satisfies
		\begin{equation}\label{eq:log-estimate}
			R_\Om-r_\Om
			\le C\,\delta^{1/2}\log\frac{C}{\delta}.
		\end{equation}
	\end{theorem}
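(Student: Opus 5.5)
Throughout we work under Proposition~\ref{prop:uniform-geometry}, so $\diam(\Om)\le D_*$ and $r_\Om\ge r_*$. Normalize $u=u_\Om$ so that $0\le u\le\delta$ on $\partial\Om$. The proof has three steps.

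\textbf{Step 1: tangential gradient.} The aim is
\[
\|\nabla_{\partial\Om}u\|_{L^2(\partial\Om)}^2\le C\delta .
\]
On $\partial\Om$ the normal derivative is the constant $c=-|\Om|/P(\Om)$, so $|\nabla u|^2=c^2+|\nabla_{\partial\Om}u|^2$ there. Consider the $P$-function $P=|\nabla u|^2+u$, which is subharmonic because $\Delta P=2|D^2u|^2-1\ge0$ in the plane. Integrating $\Delta u=-1$ against $u-\min_{\partial\Om} u$ gives
\[
\int_\Om|\nabla u|^2=\int_\Om(u-m)-c\int_{\partial\Om}(u-m)\,d\sigma ,
\]
where $m=\min_{\partial\Om}u$. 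Combining this with the boundary identity for $\partial_\nu |\nabla u|^2$ on convex domains (curvature enters with the favorable sign, as in Weinberger's argument), one should obtain $\int_{\partial\Om}|\nabla_T u|^2\lesssim \osc_{\partial\Om} u$. The constants depend only on $P(\Om)\le\pi D_*$ and $r_*$.

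The Lipschitz/convex (nonsmooth) case will be handled by approximating $\Om$ from inside by smooth convex domains and using stability of the Neumann problem under this approximation. A uniform $L^\infty$ bound on $\nabla u$ is not available for general convex domains, so only integral quantities will be used.

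\textbf{Step 2: reverse-Serrin identity.} Apply the Magnanini--Molinarolo--Poggesi identity. With $q=\tfrac{|x-z|^2}{4}$ for a suitable center $z$ (chosen as in their work, e.g.\ via $\nabla u$ averaged against the boundary), it expresses
\[
\int_\Om |D^2 h|^2\,(\text{weight}),\qquad h=u+q \text{ harmonic},
\]
in terms of boundary integrals of $(u-\bar u)$ and $\nabla_T u$ against $\partial_\nu q - c$. By Step 1 and $\osc_{\partial\Om}u\le\delta$, these boundary terms are $O(\delta)$. Hence the harmonic function $h$ has Hessian of weighted $L^2$-size $O(\delta^{1/2})$.

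\textbf{Step 3: from weighted Hessian bound to $R_\Om-r_\Om$ (main obstacle).} Convert the interior weighted $L^2$ bound on $D^2h$ into an $L^\infty$ bound on $\osc_{\partial\Om}h$, i.e.\ on the oscillation of $|x-z|^2$ over $\partial\Om$. Write $h$ as a harmonic function whose gradient is $W^{1,2}$ with a weight degenerating like $\dist(x,\partial\Om)$. The two-dimensional weighted trace/Poincar\'e estimate then yields
\[
\osc_{\partial\Om}h\le C\,\delta^{1/2}\log\frac{C}{\delta}.
\]
The logarithm comes from the borderline embedding $W^{1,2}\not\subset L^\infty$ in dimension two. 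Concretely, split $\Om$ into the layer $\{\dist(x,\partial\Om)<\delta\}$ and its complement. Use the interior Hessian bound on the complement, and a crude gradient bound (via Step 1 and convexity) in the thin layer. Choosing the cutoff scale optimally produces the $\log(1/\delta)$.

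Finally, since $u$ oscillates by at most $\delta$ on $\partial\Om$, the bound $\osc_{\partial\Om}|x-z|^2\le C\delta^{1/2}\log(C/\delta)$ follows. Because $|x-z|\ge r_*/2$ on $\partial\Om$, taking square roots gives $\max_{\partial\Om}|x-z|-\min_{\partial\Om}|x-z|\le C\delta^{1/2}\log(C/\delta)$. This bounds $R_\Om-r_\Om$, which gives \eqref{eq:log-estimate}.

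I expect Step 3, specifically the degenerate-weight trace estimate without curvature bounds, to be the hardest part. I would first attempt it using the uniform cone condition supplied by $r_*$ and $D_*$.
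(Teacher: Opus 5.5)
There is a genuine gap. Your architecture matches the paper's: a tangential estimate, then the Magnanini--Molinarolo--Poggesi identity, then a critical two-dimensional step that produces the logarithm. What is missing is the analytic input that closes it, and you explicitly disclaim it.

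\textbf{Step~1 is not proved, and the integral identities you cite do not give it.} Integrating the boundary identity for $\partial_\nu|\nabla u|^2$ (Weinberger's computation, with $u_{\tau\nu}=-\kappa u_\tau$) and using $|D^2u|^2\ge\frac12$ yields only
\[
\int_{\partial\Om}\kappa\,|\nabla_{\partial\Om}u|^2\,ds\le\frac{|\Om|}{2}\Bigl(1-\frac{4\pi|\Om|}{P(\Om)^2}\Bigr).
\]
This is a curvature-weighted bound by the isoperimetric deficit. It says nothing on flat pieces of $\partial\Om$ and does not involve $\osc_{\partial\Om}u$.

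The missing idea is to use your $P$-function pointwise, through the maximum principle, rather than integrated. Set $v=-2u+c_0$ with $\min_{\partial\Om}v=0$; then $\mathcal P=\frac12|\nabla v|^2-v$ is subharmonic and attains its maximum on $\partial\Om$. If $v_\tau\neq0$ at that point, $\mathcal P_\tau=0$ forces $v_{\tau\tau}=1+c_\Om\kappa$, and then $\mathcal P_\nu=-\kappa(c_\Om^2+v_\tau^2)\le0$, contradicting Hopf. Hence $\max_{\overline\Om}\mathcal P\le c_\Om^2/2$. This gives at once two bounds, both independent of curvature and therefore stable under smooth approximation:
\begin{itemize}
\item the pointwise bound $|\nabla_{\partial\Om}v|^2\le 2v\le 2\varepsilon$ on $\partial\Om$;
\item the global bound $|\nabla v|^2\le c_\Om^2+2\varepsilon$ in $\Om$.
\end{itemize}

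\textbf{The global gradient bound is exactly what your Step~3 needs.} In two dimensions, smallness of $\int_\Om d^2|D^2h|^2$ does not bound $\osc h$. The same holds for $\|\nabla h\|_{L^2}$, which you get from Dur\'an's weighted Poincar\'e inequality once $\nabla h$ is normalized in mean or at a deep point. For example, $h=\eta\log|x-x_0|$ with $x_0$ at distance $\rho$ outside $\Om$ has both quantities of size $\eta\sqrt{\log(1/\rho)}$, but oscillation $\eta\log(1/\rho)$. So your layer splitting needs a uniform bound on $\nabla h$ in $\{d<s\}$, and an $L^2(\partial\Om)$ bound on the tangential derivative supplies nothing of the kind.

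Once $\|\nabla h\|_{L^\infty}\le D_*+C$ is available from the $P$-function, your splitting does work:
\begin{itemize}
\item in the interior, $|D^2h|\lesssim\delta^{1/2}d^{-2}$, hence $|\nabla h|\lesssim\delta^{1/2}/d$ along segments to $z$, hence oscillation $\lesssim\delta^{1/2}\log(1/s)$;
\item in the layer, the oscillation is $\lesssim s$.
\end{itemize}
This is an elementary substitute for the Magnanini--Poggesi critical interpolation used in the paper. A uniform Geng--Shen $W^{1,p}$ bound with $p>2$, plus Morrey on cones, would also handle the layer.

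Two smaller points.
\begin{itemize}
\item Step~2 does survive with only an $L^2$ tangential bound, because $-\int_{\partial\Om}(\overline v-v)\kappa|\nabla_{\partial\Om}v|^2\le0$. You must still record the weight bound $\overline v-v\ge d^2/2$.
\item The claim $|x-z|\ge r_*/2$ on $\partial\Om$ needs $z$ uniformly deep in $\Om$. The paper takes $z$ to be the minimum point of $v$ and proves $\dist(z,\partial\Om)\ge d_0$, again using the gradient bound. Alternatively, you can bypass this lower bound by using $\rho_e\ge1$, which follows from $|\Om|=\pi$.
\end{itemize}
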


	The quantitative argument uses the reverse-Serrin identity of Magnanini--Molinarolo--Poggesi~\cite{MagnaniniMolinaroloPoggesi2024}, together with interpolation tools developed by Magnanini--Poggesi~\cite{MP19,MP201,MP202,MagnaniniPoggesiInterpolation}. The key additional ingredients are a new tangential gradient estimate for the Neumann torsion function, established in Lemma~\ref{lem:boundary-gradient} below, and the compactness mechanism developed in the previous section. Together, these ingredients eliminate the need for the uniform interior and exterior sphere conditions imposed in earlier works.
	
	To match the notation of the reverse-Serrin identity in Lemma \ref{lem:MMP-identity},
	we reverse the sign and normalize the source term. Let $u$ be the Neumann torsion function in $\Omega$ and set
	\begin{equation}\label{eq:v-def}
		v:=-2u+c_0,
	\end{equation}
	where $c_0$ is chosen so that
	\[
	\min_{\partial\Om}v=0.
	\]
	Then  one has 
	\begin{equation}\label{eq:v-problem}
		\Delta v=2\quad\text{in }\Om,
		\qquad
		\pnu v=c_\Om:=\frac{2\pi}{P(\Om)}\quad\text{on }\partial\Om.
	\end{equation}

	The following lemma is the first essential point. It converts small oscillation of
	the Dirichlet trace into a pointwise estimate for its tangential derivative.
	
	\begin{lemma}\label{lem:boundary-gradient}
		Let $\Om$ be smooth and strictly convex, with $O(\Om)=\varepsilon/2$, where $0<\varepsilon<\delta_*$ and $\delta_*$ is as in Proposition~\ref{prop:uniform-geometry}.Then the solution of
		\eqref{eq:v-problem}  satisfies
		\begin{equation}\label{eq:tangential-bound}
			|\nabla_{\partial\Om}v|^2\le 2v\le2\varepsilon
			\qquad\text{on }\partial\Om.
		\end{equation}
		Moreover,
		\begin{equation}\label{eq:global-gradient}
			\|\nabla v\|_{L^\infty(\Om)}^2\le c_\Om^2+2\varepsilon.
		\end{equation}
	\end{lemma}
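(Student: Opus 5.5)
We use a $P$-function argument. Set
\[
P:=|\nabla v|^2-2v .
\]
Since $\Delta v=2$ in dimension two, Bochner's formula gives
\[
\Delta|\nabla v|^2=2|D^2v|^2\ge (\Delta v)^2=4 .
\]
Hence $\Delta P\ge 4-4=0$, so $P$ is subharmonic. Smoothness and strict convexity of $\Omega$ make $v$ smooth up to the boundary, so the maximum principle applies and $\max_{\overline\Omega}P$ is attained at some $x_0\in\partial\Omega$.

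The core step is to show that this maximum equals $c_\Omega^2$. On $\partial\Omega$ write $|\nabla v|^2=c_\Omega^2+|\nabla_{\partial\Omega}v|^2$, and use arclength $s$ and curvature $\kappa\ge0$. The Laplacian splits on the boundary as
\[
\Delta v=v_{\nu\nu}+\kappa v_\nu+v_{ss},
\]
and because $v_\nu\equiv c_\Omega$ is constant, the boundary identity for the Hessian is $v_{\nu s}=\kappa v_s$.

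At $x_0$, differentiating $P$ along the boundary gives the tangential condition $\partial_sP=0$. Hopf's lemma gives $\partial_\nu P(x_0)\ge0$, and strictly so unless $P$ is constant. Next compute
\[
\partial_\nu P=2v_\nu v_{\nu\nu}+2v_sv_{s\nu}-2v_\nu
=2c_\Omega\bigl(2-\kappa c_\Omega-v_{ss}\bigr)+2\kappa v_s^2-2c_\Omega .
\]
This uses $v_{\nu\nu}=2-\kappa c_\Omega-v_{ss}$.

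The tangential condition says $\partial_s(v_s^2-2v)=0$, that is $v_s(v_{ss}-1)=0$, at $x_0$. The delicate part is to combine this with $\partial_\nu P\ge0$, convexity ($\kappa\ge0$) and the identity $\int_{\partial\Omega}\kappa\,d\sigma=2\pi$ in order to rule out $v_s(x_0)\neq0$.

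If instead $v_s(x_0)=0$, then $P(x_0)=c_\Omega^2-2v(x_0)\le c_\Omega^2$ and the claim holds. We expect this case analysis to be the main obstacle. A first option is to track the value of $P$ along the boundary curve and use that a boundary maximum of a nonconstant subharmonic function forces a strict Hopf inequality. If this fails, we would switch to Weinberger's alternative $P$-function with the opposite sign normalization, for which the Neumann condition gives the sign of $\partial_\nu P$ directly.

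Once the bound $P\le c_\Omega^2$ is established on all of $\overline\Omega$, both conclusions follow. On the boundary it reads
\[
c_\Omega^2+|\nabla_{\partial\Omega}v|^2-2v\le c_\Omega^2,
\]
which is exactly $|\nabla_{\partial\Omega}v|^2\le 2v$. Since $\min_{\partial\Omega}v=0$ and $\osc_{\partial\Omega}v=2O(\Omega)=\varepsilon$, we have $v\le\varepsilon$ on $\partial\Omega$, giving \eqref{eq:tangential-bound}.

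Inside the domain, $v$ is subharmonic, so $v\le\max_{\partial\Omega}v\le\varepsilon$. Therefore
\[
|\nabla v|^2\le c_\Omega^2+2v\le c_\Omega^2+2\varepsilon ,
\]
which is \eqref{eq:global-gradient}. The smallness hypothesis $\varepsilon<\delta_*$ plays no role in the estimate itself; it is only used later, through Proposition~\ref{prop:uniform-geometry}, to control $c_\Omega$.
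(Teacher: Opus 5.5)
\textbf{Gap: the case $v_s(x_0)\neq0$ is never excluded.} Most of your setup is correct and matches the paper's own argument. This includes the subharmonic function $P=|\nabla v|^2-2v$ (twice the paper's $\mathcal P$), the boundary splitting $\Delta v=v_{\nu\nu}+\kappa v_\nu+v_{ss}$, the tangential condition $v_s(v_{ss}-1)=0$ at the boundary maximum $x_0$, and the final deductions once $P\le c_\Omega^2$ is known. But the decisive step is left open: you never rule out $v_s(x_0)\neq0$. It only looks delicate because of a sign error.

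With the exterior normal and $\kappa\ge0$ one has $\partial_s\nu=\kappa\tau$. Differentiating $\nabla v\cdot\nu=c_\Omega$ along $\partial\Omega$ then gives $D^2v(\tau,\nu)+\kappa v_s=0$, that is, $v_{\nu s}=-\kappa v_s$, not $+\kappa v_s$. You can check this on the unit circle: $D^2v(e_r,e_\theta)=\partial_\theta\partial_r v-\partial_\theta v=-\partial_\theta v$ when $\partial_r v$ is constant on $r=1$.

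With the correct sign, suppose $v_s(x_0)\neq0$. Then $v_{ss}(x_0)=1$, so $v_{\nu\nu}=1-\kappa c_\Omega$, and
\[
\partial_\nu P(x_0)=2c_\Omega(1-\kappa c_\Omega)-2\kappa v_s^2-2c_\Omega=-2\kappa\bigl(c_\Omega^2+v_s^2\bigr)<0,
\]
because $\kappa>0$ by strict convexity. This contradicts the strict Hopf inequality $\partial_\nu P(x_0)>0$ for a nonconstant subharmonic function at a boundary maximum. Hence $v_s(x_0)=0$, and $P(x_0)=c_\Omega^2-2v(x_0)\le c_\Omega^2$.

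Your sign instead produced $2\kappa(v_s^2-c_\Omega^2)$, which has no definite sign; that is why you got stuck. No appeal to $\int_{\partial\Omega}\kappa\,d\sigma=2\pi$ is needed, nor tracking $P$ along the curve, nor switching to another $P$-function. As written, those fallbacks are suggestions, not an argument, so the bound $P\le c_\Omega^2$ is not established in your proposal.

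\textbf{Second omission: the constant case.} You note that Hopf is strict only when $P$ is nonconstant, but you never return to the case where $P$ is constant. That case is easy. Evaluate at a point $x_*\in\partial\Omega$ where $v$ attains its boundary minimum $0$. There $v_s(x_*)=0$, so $P\equiv P(x_*)=c_\Omega^2$. With these two repairs your argument coincides with the paper's proof.
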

	
	\begin{proof}
		From the definition of \eqref{eq:v-def},  it follows that 
		\begin{equation}\label{eq:eps-def}
			0\le v\le\varepsilon:=\osc_{\partial\Om}v 
			\quad\text{on }\partial\Om.
		\end{equation}
		Note that	 $v$ is subharmonic, we adopt  the maximum principle to get  
		\begin{equation}\label{eq:v-upper}
			v\le\varepsilon\quad\text{in }\Om.
		\end{equation}
		
		To estimate the tangential derivative, 	define  the following    P-function
		\[
		\mathcal P:=\frac12|\nabla v|^2-v.
		\]
		Since $\Delta v=2$, the two-dimensional Newton inequality gives
		\[
		\Delta\mathcal P=|D^2v|^2-2\ge\frac{(\Delta v)^2}{2}-2=0.
		\]
		That is, $\mathcal P$ is also subharmonic.		We claim that
		\begin{equation}\label{eq:P-upper}
			\mathcal P\le\frac{c_\Om^2}{2}\quad\text{in }\overline\Om.
		\end{equation}
		Indeed,	assume that  $\mathcal P$ is constant, and choose  $x_* \in \partial\Om$   satisfying  $$v(x_*)=\min_{\partial\Om}v=0.$$
		For the   smooth and strictly convex  domain $\Om$, 
		let $\tau$ be a unit tangent, $\nu$ be the exterior unit normal, and  $\kappa>0$ denote the curvature.  Then we use the
		convention
		\[
		D_\tau\nu=\kappa\tau,
		\qquad
		D_\tau\tau=-\kappa\nu.
		\]
		Differentiating the constant Neumann condition along the boundary  means 
		\begin{equation}\label{eq:mixed-hessian}
			v_{\tau\nu}=-\kappa v_\tau.
		\end{equation}
		Hence,  we see from \eqref{eq:v-problem} that 
		\[
		|\nabla v(x_* )|^2 = v_{\tau}(x_* )^2 + v_{\nu}(x_* )^2 = c_{\Omega}^2.
		\]
		We further have
		\[
		\mathcal{P}(x_* )= \frac{c_{\Omega}^2}{2}\qquad\text{and }\qquad\mathcal P=c_\Om^2/2.
		\]
		
		On the other hand, 	 
		if  $\mathcal P$ is not constant, then  it attains its maximum at some point  $x_0\in\partial\Om$. 
		At $x_0$,  we infer from \eqref{eq:v-problem} and \eqref{eq:mixed-hessian} that 
		\begin{equation*}
			\begin{aligned}
				0&=\mathcal P_\tau=v_\tau v_{\tau\tau}+c_\Om v_{\tau\nu}-v_\tau
				=v_\tau\bigl(v_{\tau\tau}-1-c_\Om\kappa\bigr).
			\end{aligned}
		\end{equation*}
		If $v_\tau(x_0)\ne0$, then
		$v_{\tau\tau}=1+c_\Om\kappa$.  Using \eqref{eq:v-problem} and \eqref{eq:mixed-hessian} again, one has 
		\[
		\mathcal P_\nu
		=c_\Om v_{\nu\nu}+v_\tau v_{\tau\nu}-c_\Om
		=-\kappa\bigl(c_\Om^2+v_\tau^2\bigr)<0.
		\]
		This contradicts the Hopf  lemma, which  implies that 
		$\mathcal P_\nu(x_0)>0$ at a boundary maximum of a nonconstant subharmonic function. Thus $v_\tau(x_0)=0$.   We infer  from   \eqref{eq:eps-def} that 
		\[
		\max_{\overline\Om}\mathcal P
		=\mathcal P(x_0)=\frac{c_\Om^2}{2}-v(x_0)
		\le\frac{c_\Om^2}{2}.
		\]
		By virtue of 
		\[
		\mathcal P=\frac12\bigl(c_\Om^2+|\nabla_{\partial\Om}v|^2\bigr)-v, \quad\text{on }\partial\Om,
		\]
		\eqref{eq:P-upper}  shows that 
		$|\nabla_{\partial\Om}v|^2\le2v$, and so
		\eqref{eq:tangential-bound} holds.  Finally, \eqref{eq:v-upper} and \eqref{eq:P-upper}  indicate 
		\[
		|\nabla v|^2\le c_\Om^2+2v\le c_\Om^2+2\varepsilon,
		\]
		which is \eqref{eq:global-gradient}.
	\end{proof}
	
	The next lemma derives the uniform depth estimate of minimal point of $v$.

	\begin{lemma}\label{lem:minimum-depth}
		Let $\Omega$ be as in Lemma~\ref{lem:boundary-gradient}.
		There exist $d_0>0$ and $\varepsilon_0>0$, depending only on $r_*$ and $D_*$,
		such that, if $\varepsilon\le\varepsilon_0$ and $z\in\Om$ is a minimum point
		of $v$, then
		\begin{equation}\label{eq:z-depth}
			\dist(z,\partial\Om)\ge d_0.
		\end{equation}
	\end{lemma}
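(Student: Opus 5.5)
The plan is to bound $v(z)$ from above by a fixed negative quantity, and then use the global gradient bound of Lemma~\ref{lem:boundary-gradient} to show that $v$ cannot climb from that value up to its nonnegative boundary values over a short distance. The argument has three steps: a comparison with the Dirichlet torsion function, a Lipschitz bound for $v$, and the integration of that bound along a segment.

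\emph{Step 1: comparison with the Dirichlet torsion function.} Let $\tau$ denote the Dirichlet torsion function of $\Om$, so $-\Delta\tau=1$ in $\Om$ and $\tau=0$ on $\partial\Om$, and set $w:=-2\tau$. Then $\Delta w=2$, so $v-w$ is harmonic in $\Om$. On $\partial\Om$ its values are those of $v$, which lie in $[0,\varepsilon]$ by \eqref{eq:eps-def}. The maximum principle therefore gives $w\le v\le w+\varepsilon$ in $\Om$.

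By Proposition~\ref{prop:uniform-geometry}, $\Om$ contains a disk $B_{r_*}(x_1)$. Comparing $\tau$ with the torsion function of that disk gives $\tau(x_1)\ge r_*^2/4$. Since $z$ minimizes $v$, we get
\[
v(z)\le v(x_1)\le w(x_1)+\varepsilon\le -\frac{r_*^2}{2}+\varepsilon .
\]

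\emph{Step 2: a uniform Lipschitz bound.} By the isoperimetric inequality and $|\Om|=\pi$, we have $P(\Om)\ge 2\pi$, hence $c_\Om=2\pi/P(\Om)\le 1$. Then \eqref{eq:global-gradient} yields
\[
\|\nabla v\|_{L^\infty(\Om)}\le\sqrt{1+2\varepsilon}.
\]

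\emph{Step 3: integrating along a segment.} Let $y\in\partial\Om$ realize $\dist(z,\partial\Om)$. By convexity (indeed already by the choice of $y$), the open segment from $z$ to $y$ lies in $\Om$. Since $v$ is $C^1$ up to the boundary for smooth $\Om$, integrating $\nabla v$ along this segment and using $v(y)\ge 0$ gives
\[
\frac{r_*^2}{2}-\varepsilon\le v(y)-v(z)\le \sqrt{1+2\varepsilon}\,\dist(z,\partial\Om).
\]
Choosing $\varepsilon_0=\min\{r_*^2/4,\,1\}$ then gives, for $\varepsilon\le\varepsilon_0$,
\[
\dist(z,\partial\Om)\ge \frac{r_*^2}{4\sqrt3}=:d_0 .
\]
This constant depends only on $r_*$, which in turn depends on $D_*$ through Proposition~\ref{prop:uniform-geometry}.

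The main point is the upper bound on $v(z)$ in Step 1. A naive attempt would work only with the boundary data of $v$, but that gives no information about interior depth. Comparing with the Dirichlet torsion function, and using the uniform inradius supplied by the compactness step, is what makes the minimum a definite amount below $0$. The remaining steps are routine once the gradient bound \eqref{eq:global-gradient} is available.
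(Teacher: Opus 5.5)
Your proof is correct and follows the paper's argument. You compare $v$ with $-2$ times the Dirichlet torsion function at the center of an inball of radius $r_*$ to get $v(z)\le -r_*^2/4$, and then integrate the gradient bound $\|\nabla v\|_{L^\infty(\Omega)}\le\sqrt{1+2\varepsilon}$, obtained from $c_\Omega\le 1$ and Lemma~\ref{lem:boundary-gradient}, along a segment from $z$ to a nearest boundary point. Your extra requirement $\varepsilon\le 1$ is a small improvement: it makes $d_0=r_*^2/(4\sqrt3)$ genuinely independent of $\varepsilon$, whereas the paper's $d_0=r_*^2/(4L_*)$ nominally depends on $\varepsilon$ through $L_*$.
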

	
	\begin{proof}
		Let $x_*$ be the center of an inball of radius at least $r_*$. 
		Let $\tau_\Om$ be the solution of
		\begin{equation}\label{eq:tau-problem}
			\Delta\tau_\Om=2\quad\text{in }\Om,
			\qquad
			\tau_\Om=0\quad\text{on }\partial\Om.
		\end{equation}
		Thus $-\tau_\Om/2$ is the usual positive Dirichlet torsion function. By domain monotonicity for the positive Dirichlet torsion function, we derive 
		\[
		\tau_\Om(x_*)\le-\frac{r_*^2}{2}.
		\]
		The function $v-\tau_\Om$ is harmonic and its boundary values lie in
		$[0,\varepsilon]$. Hence
		\[
		0\le v-\tau_\Om\le\varepsilon\quad\text{in }\Om.
		\]
		If $\varepsilon\le r_*^2/4$, then
		\begin{equation}\label{eq:min-negative}
			v(z)\le v(x_*)\le-\frac{r_*^2}{4}.
		\end{equation}
		By the isoperimetric inequality, one has 
		\begin{equation}\label{eq:c-om<1}
			c_\Om=2\pi/P(\Om)\le1.
		\end{equation}   
		So	it follows from Lemma~\ref{lem:boundary-gradient} and \eqref{eq:c-om<1}  that a uniform bound
		\begin{equation*}
			\|\nabla v\|_\infty\le L_*:=\sqrt{1+2\varepsilon}.
		\end{equation*}
		Applying $v\ge0$ on $\partial\Om$  and \eqref{eq:min-negative},  we find 
		\[
		\dist(z,\partial\Om)
		\ge\frac{r_*^2}{4L_*}=:d_0.
		\]
	\end{proof}
	\begin{remark}
		The hot-spot localization result of Magnanini--Poggesi
		\cite{MagnaniniPoggesiHotSpots} concerns maxima of the positive
		Dirichlet torsion function. Lemma~\ref{lem:minimum-depth} gives an
		analogous interior-depth estimate for a minimum point of the
		constant-flux Neumann solution, using $\tau_\Omega$ as a comparison
		function and the gradient bound from Lemma~\ref{lem:boundary-gradient}.
	\end{remark}

	Fix a minimum point $z$ as in Lemma~\ref{lem:minimum-depth}, and define
	\begin{equation}\label{eq:q-h-def}
		q(x):=\frac12|x-z|^2,
		\qquad
		h:=q-v.
	\end{equation}
	Since $\Delta q=\Delta v=2$, the function $h$ is harmonic in $\Om$. Moreover,
	\begin{equation}\label{eq:grad-h-z}
		\nabla h(z)=0.
	\end{equation}
	Let $\varepsilon$ be defined in \eqref{eq:eps-def} and 
	\[
	\overline v:=\max_{\partial\Om}v=\varepsilon,
	\]
	then	we use the following integral identity for the reverse Serrin problem. It is
	the planar case of the identity of Magnanini--Molinarolo--Poggesi.
	
	\begin{lemma}[Reverse Serrin identity]\label{lem:MMP-identity}
		Let $\Om$ be a bounded $C^2$ domain in $\R^2$, let $v$ satisfy
		$\Delta v=2$ in $\Om$ and $v_\nu=c_\Om$ on $\partial\Om$, and let $h=q-v$ be
		as in \eqref{eq:q-h-def}. Then
		\begin{align}
			\int_\Om(\overline v-v)|D^2h|^2\dd x
			={}&\frac12\int_{\partial\Om}|\nabla_{\partial\Om}v|^2h_\nu\dd s
			\label{eq:MMP-identity}\\
			&+\int_{\partial\Om}(\overline v-v)(c_\Om\kappa-2)h_\nu\dd s\notag\\
			&-\int_{\partial\Om}(\overline v-v)\kappa
			|\nabla_{\partial\Om}v|^2\dd s.\notag
		\end{align}
		Here $\kappa$ is the curvature with respect to the exterior normal.
	\end{lemma}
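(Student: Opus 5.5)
The identity is a weighted Bochner/Green computation for the harmonic function $h$, followed by a reduction of the boundary terms using the overdetermined data; I only sketch the bookkeeping. Set $w:=\overline v-v$, so that $\Delta w=-2$ in $\Om$ and $w_\nu=-c_\Om$ on $\partial\Om$. Since $h$ is harmonic, Bochner's formula gives $\tfrac12\Delta|\nabla h|^2=|D^2h|^2$. Apply Green's second identity to $w$ and $g:=\tfrac12|\nabla h|^2$:
\[
\int_\Om w|D^2h|^2\dd x=\int_\Om g\,\Delta w\dd x+\int_{\partial\Om}(w\,g_\nu-g\,w_\nu)\dd s
=-\int_\Om|\nabla h|^2\dd x+\int_{\partial\Om}\Bigl(w\,g_\nu+\frac{c_\Om}{2}|\nabla h|^2\Bigr)\dd s .
\]
Because $h$ is harmonic, $\int_\Om|\nabla h|^2=\int_{\partial\Om}h\,h_\nu\dd s$. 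After this step everything is a boundary integral, and the remaining work is to rewrite it in the form \eqref{eq:MMP-identity}.

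On $\partial\Om$ I would use arclength $s$, the unit tangent $\tau$ and the convention $D_\tau\nu=\kappa\tau$. The boundary values of $h$ are explicit:
\[
h_\nu=(x-z)\cdot\nu-c_\Om,\qquad h_\tau=(x-z)\cdot\tau-v_\tau .
\]
For the term $w\,g_\nu=w(h_\tau h_{\tau\nu}+h_\nu h_{\nu\nu})$ I would use two curve identities:
\begin{itemize}
\item harmonicity together with the curve formula for the Laplacian gives $h_{\nu\nu}=-h_{\tau\tau}=-(\partial_s^2h+\kappa h_\nu)$;
\item differentiating along the boundary gives $h_{\tau\nu}=\partial_s h_\nu-\kappa h_\tau$.
\end{itemize}
The same relation applied to $v$, with constant $v_\nu$, gives $v_{\tau\nu}=-\kappa v_\tau$, as in \eqref{eq:mixed-hessian}. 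The terms containing $\partial_s^2h$ and $\partial_s h_\nu$ are then integrated by parts along the closed curve $\partial\Om$, so that derivatives move onto $w$. Here $\partial_s w=-v_\tau$, and this is exactly how $|\nabla_{\partial\Om}v|^2=v_\tau^2$ enters the formula.

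To collapse the expression I would use the elementary facts
\[
\partial_s\bigl((x-z)\cdot\tau\bigr)=1-\kappa\,(x-z)\cdot\nu,\qquad
\partial_s\bigl((x-z)\cdot\nu\bigr)=\kappa\,(x-z)\cdot\tau,
\]
and $\int_{\partial\Om}h_\nu\dd s=0$, which follows from harmonicity (equivalently, $\int_{\partial\Om}(x-z)\cdot\nu\dd s=2|\Om|=c_\Om P(\Om)$). I would also replace $h$ on the boundary by $q-v$ with $v=\overline v-w$, and split $|\nabla h|^2=h_\tau^2+h_\nu^2$. The target is to match three groups of terms with the three integrals in \eqref{eq:MMP-identity}:
\begin{enumerate}
\item $\tfrac12\int_{\partial\Om}v_\tau^2h_\nu\dd s$, coming from the cross terms in $h_\tau^2$;
\item $\int_{\partial\Om}w(c_\Om\kappa-2)h_\nu\dd s$, coming from $w\,h_\nu h_{\nu\nu}$ and from $-\int_{\partial\Om}h\,h_\nu\dd s$;
\item $-\int_{\partial\Om}w\kappa v_\tau^2\dd s$, coming from $w\,h_\tau h_{\tau\nu}$ after using $v_{\tau\nu}=-\kappa v_\tau$.
\end{enumerate}
A useful simplification is that $q$ is an explicit quadratic, so every $q$-contribution can be evaluated using $\Delta q=2$ and the divergence theorem.

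The main obstacle is purely the bookkeeping: correct signs under the curvature convention, and making sure that all purely geometric leftovers integrate to zero rather than cancel by luck. These leftovers are the terms involving only $(x-z)\cdot\nu$, $(x-z)\cdot\tau$ and $\kappa$. To guard against this, I would first check the identity on a disk centered at $z$. There $v=q+\mathrm{const}$, so $h$ is constant and $v_\tau=0$; both sides vanish, which fixes the constants. I would then check an ellipse to confirm the coefficient $(c_\Om\kappa-2)$. If the direct route becomes unwieldy, the fallback is to quote the general $n$-dimensional identity of Magnanini--Molinarolo--Poggesi~\cite{MagnaniniMolinaroloPoggesi2024} and specialize it to $n=2$. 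In that case the mean curvature reduces to $\kappa$, and the normalization $\Delta v=2$ is precisely theirs for $n=2$.
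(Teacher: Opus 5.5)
Your fallback, quoting \cite[Corollary~3.3]{MagnaniniMolinaroloPoggesi2024} and specializing to $n=2$ (mean curvature $\kappa$, second-fundamental-form term $\kappa|\nabla_{\partial\Om}v|^2$), is exactly the paper's proof, so the lemma is covered by it. The self-contained route you actually sketch, however, has a genuine gap: the boundary bookkeeping you describe cannot close.

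To see this, carry your steps out with $p:=(x-z)\cdot\nu$, $t:=(x-z)\cdot\tau$, $c:=c_\Om$. One gets $h_{\tau\nu}=\kappa v_\tau$ and $h_{\nu\nu}=c\kappa-1+\partial_s^2v$. Integrating $\int_{\partial\Om}w\,h_\nu\,\partial_s^2v\,ds$ by parts produces $\int_{\partial\Om}v_\tau^2h_\nu\,ds$ with coefficient $1$. (It does not come from the cross term $-2t\,v_\tau$ of $h_\tau^2$, which is linear in $v_\tau$.) After using $\int_{\partial\Om}h_\nu\,ds=0$ and $h=q-v$, the left side minus the right side of \eqref{eq:MMP-identity} reduces to
\[
\frac12\int_{\partial\Om}v_\tau^2\,p\,ds-c\int_{\partial\Om}t\,v_\tau\,ds
+\frac c2\int_{\partial\Om}\bigl(t^2+(p-c)^2\bigr)ds+c\int_{\partial\Om}q\,ds-\int_{\partial\Om}q\,p\,ds .
\]
The last three terms are purely geometric, but the first is quadratic in the trace of $v$. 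At this stage all pointwise boundary information has been used: the Cauchy data $v|_{\partial\Om}$, $v_\nu=c$, and the equation at the boundary. If $v|_{\partial\Om}$ is replaced by an arbitrary $f$, the expression is not constant in $f$; for instance, for convex $\Om\ni z$ one has $p>0$, so its quadratic part $\frac12\int f_s^2p\,ds$ is positive. Hence no integration by parts along the curve, curve identity, or divergence-theorem evaluation of $q$-terms can make it vanish; a global identity quadratic in $v$ is needed. Your disk check cannot detect this, since every term vanishes there.

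The missing ingredient is the Rellich--Pohozaev identity for $v$ with the field $x-z$. In dimension two, $\Delta v=2$ gives
\[
\operatorname{div}\Bigl(((x-z)\cdot\nabla v)\nabla v-\frac12|\nabla v|^2(x-z)\Bigr)=2(x-z)\cdot\nabla v .
\]
Moreover $2\int_\Om(x-z)\cdot\nabla v\,dx=2c\int_{\partial\Om}q\,ds-4\int_\Om q\,dx$. Since $\nabla v=v_\tau\tau+c\nu$ on $\partial\Om$, this yields
\[
\frac12\int_{\partial\Om}v_\tau^2\,p\,ds-c\int_{\partial\Om}t\,v_\tau\,ds=\frac{c^2}2\int_{\partial\Om}p\,ds-2c\int_{\partial\Om}q\,ds+4\int_\Om q\,dx .
\]
Substitute this into the residual and use $\int_{\partial\Om}p\,ds=cP(\Om)$, $t^2+p^2=2q$, and $\int_{\partial\Om}q\,p\,ds=\int_\Om(|\nabla q|^2+q\Delta q)\,dx=4\int_\Om q\,dx$. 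The residual is then exactly zero. This is the same mechanism as in Weinberger's argument: a Bochner/$P$-function step combined with Pohozaev. With it added, your direct route becomes a valid self-contained proof; without it, only the citation fallback stands.
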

	
	\begin{proof}
		The two-dimensional specialization of the identity in Magnanini--Molinarolo--Poggesi~\cite[Corollary~3.3]{MagnaniniMolinaroloPoggesi2024} applies to $q$ and $v$, which have the same constant Laplacian, with harmonic difference $h=q−v$.
		In dimension two the mean curvature is the scalar curvature
		$\kappa$, and $\langle(\nabla\nu)\nabla_\Gamma v,\nabla_\Gamma v\rangle
		=\kappa|\nabla_\Gamma v|^2$, giving exactly
		\eqref{eq:MMP-identity}.
	\end{proof}
	
	The next lemma presents the weighted hessian bound of $h$, as a consequence of Lemma \ref{lem:boundary-gradient} and Lemma \ref{lem:MMP-identity}.
	
	\begin{lemma}\label{lem:weighted-hessian}
		Let $\Omega$ be as in Lemma~\ref{lem:boundary-gradient}. There exists $C>0$, depending only on $D_*$, such that
		\begin{equation}\label{eq:weighted-Hessian-estimate}
			\int_\Om d(x)^2|D^2h(x)|^2\dd x\le C\varepsilon.
		\end{equation}
	\end{lemma}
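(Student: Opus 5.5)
Since $\overline v=\varepsilon$ and $0\le v$ on $\partial\Om$, I would bound the right-hand side of the identity \eqref{eq:MMP-identity} in Lemma~\ref{lem:MMP-identity}, which gives the weighted estimate $\int_\Om(\varepsilon-v)|D^2h|^2\dd x$. I would then compare the weight $\varepsilon-v$ with $d(x)^2$ from below. The three boundary terms are handled as follows.

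\emph{First term.} Lemma~\ref{lem:boundary-gradient} gives $|\nabla_{\partial\Om}v|^2\le2\varepsilon$. Next, $h_\nu=q_\nu-c_\Om=(x-z)\cdot\nu-c_\Om$. By convexity, $(x-z)\cdot\nu\ge0$ is the support function about $z$, and it is bounded by $D_*$. Hence $|h_\nu|\le D_*+1$, using \eqref{eq:c-om<1}. The first term is therefore at most $\varepsilon(D_*+1)P(\Om)\le C\varepsilon$, since $P(\Om)\le\pi D_*$.

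\emph{Third term.} Since $\kappa>0$ and $\overline v-v\ge0$, the third term is nonpositive, so I would simply drop it.

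\emph{Second term.} This is the main obstacle, because $\kappa$ is not bounded a priori. I would split it as
\[
\int_{\partial\Om}(\varepsilon-v)(c_\Om\kappa-2)h_\nu\dd s
\le \varepsilon\, c_\Om\int_{\partial\Om}\kappa|h_\nu|\dd s+2\varepsilon\int_{\partial\Om}|h_\nu|\dd s .
\]
The last integral is bounded by $C P(\Om)$. For the curvature part, the total curvature $\int_{\partial\Om}\kappa\dd s=2\pi$ together with $|h_\nu|\le D_*+1$ gives a bound $C\varepsilon$. Altogether,
\[
\int_\Om(\varepsilon-v)|D^2h|^2\dd x\le C\varepsilon,
\]
with $C$ depending only on $D_*$.

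\emph{From $\varepsilon-v$ to $d^2$.} I need $\varepsilon-v(x)\ge c\,d(x)^2$. Take the Dirichlet-type function $\tau_\Om$ from \eqref{eq:tau-problem}. As in the proof of Lemma~\ref{lem:minimum-depth}, $v-\tau_\Om$ is harmonic with boundary values in $[0,\varepsilon]$, so $v\le\tau_\Om+\varepsilon$. This gives $\varepsilon-v\ge-\tau_\Om$.

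For $x\in\Om$, the ball $B_{d(x)}(x)$ lies in $\Om$. By domain monotonicity, $-\tau_\Om(x)$ is at least the torsion of that ball at its centre, which is $d(x)^2/2$. Hence $\varepsilon-v\ge d^2/2$, and
\[
\int_\Om d^2|D^2h|^2\dd x\le 2\int_\Om(\varepsilon-v)|D^2h|^2\dd x\le C\varepsilon,
\]
which is \eqref{eq:weighted-Hessian-estimate}.

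The only delicate points are the sign of the third term and the integrability of $\kappa h_\nu$. Both are resolved by strict convexity and the Gauss--Bonnet bound $\int_{\partial\Om}\kappa\dd s=2\pi$.
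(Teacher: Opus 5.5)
Your proof is correct and follows essentially the paper's argument. You bound the three boundary terms of the reverse-Serrin identity using $|\nabla_{\partial\Om}v|^2\le2\varepsilon$, $|h_\nu|\le D_*+1$, $\int_{\partial\Om}\kappa\dd s=2\pi$ and $P(\Om)\le\pi D_*$, then use $\varepsilon-v\ge d^2/2$. The differences are cosmetic: you discard the third term by its sign where the paper bounds it by $C\varepsilon^2$, and you get the quadratic lower bound via $\tau_\Om$ and domain monotonicity rather than by comparing $\overline v-v$ directly with the ball torsion.
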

	
	\begin{proof}
		First, the elementary interior-ball comparison yields
		\begin{equation}\label{eq:torsion-distance-lower}
			\overline v-v(x)\ge\frac12\dist(x,\partial\Om)^2
			\qquad(x\in\Om).
		\end{equation}
		Indeed, $w=\overline v-v$ satisfies $-\Delta w=2$ and $w\ge0$ on
		$\partial\Om$. For fixed $x\in\Om$, compare $w$  with
		\[
		\psi_x(y)=\frac{r^2-|y-x|^2}{2},
		\]
		in $B_{r}(x)$, where $r=\dist(x,\partial\Om)$.
		Since $\psi_x$ solves $-\Delta\psi=2$ and vanishes on  $\partial B_{r}(x)$,  \eqref{eq:torsion-distance-lower} holds.

		It remains to estimate the right-hand side of
		\eqref{eq:MMP-identity}.  $|x-z|\le D_*$ and $c_\Om\le1$  entails 
		\begin{equation}\label{eq:h-nu-bound}
			|h_\nu|=|(x-z)\cdot\nu-c_\Om|\le D_*+1
			\quad\text{on }\partial\Om.
		\end{equation}
		For a planar convex domain, one has 
		\[
		\int_{\partial\Om}\kappa\dd s=2\pi,
		\qquad
		P(\Om)\le\pi D_*.
		\]
		Moreover, \eqref{eq:eps-def} yields that 
		\begin{equation*}
			0\le\overline v-v\le\varepsilon \quad\text{on }\partial\Om.
		\end{equation*}
		Then using	\eqref{eq:tangential-bound}  and \eqref{eq:h-nu-bound}, we obtain
		\begin{align*}
			\left|\frac12\int_{\partial\Om}
			|\nabla_{\partial\Om}v|^2h_\nu\dd s\right|
			&\le C\varepsilon,\\
			\left|\int_{\partial\Om}(\overline v-v)
			(c_\Om\kappa-2)h_\nu\dd s\right|
			&\le C\varepsilon
			\left(c_\Om\int_{\partial\Om}\kappa\dd s+2P(\Om)\right)
			\le C\varepsilon,\\
			\left|\int_{\partial\Om}(\overline v-v)\kappa
			|\nabla_{\partial\Om}v|^2\dd s\right|
			&\le2\varepsilon^2\int_{\partial\Om}\kappa\dd s
			\le C\varepsilon^2.
		\end{align*}
		Hence \eqref{eq:MMP-identity} ensures 
		\[
		\int_\Om(\overline v-v)|D^2h|^2\dd x\le C\varepsilon.
		\]
		Combining this with \eqref{eq:torsion-distance-lower} proves
		\eqref{eq:weighted-Hessian-estimate}.
	\end{proof}

	The geometry in \eqref{eq:uniform-geometry} gives a uniform analytic class.
	Indeed, a convex domain containing a ball of radius $r_*$ and having diameter
	at most $D_*$ satisfies a uniform John condition and a uniform interior cone
	condition, with parameters depending only on $r_*$ and $D_*$. We shall use two
	standard inequalities on this class to derive the oscillation of $h$ from the weighted hessian estimate \eqref{eq:weighted-Hessian-estimate}. 
	
	The first inequality is as below, which follows from Dur\'{a}n~\cite[Theorem~5.3]{Duran12}.
	\begin{lemma}\label{lem:improved-Poincare}
		Let $\Omega\subset\R^2$ be a bounded domain of diameter $R$ that is star-shaped with respect to a ball $B\subset\Omega$ of radius $\rho$. Then every $f\in H^1(\Omega)$ with zero mean satisfies
		\begin{equation}\label{eq:improved-Poincare}
			\|f\|_{L^2(\Omega)}
			\le C\|\dist(x,\partial\Om) \nabla f\|_{L^2(\Omega)}
		\end{equation}
		where $C>0$ depends only on $R$ and $\rho$.
	\end{lemma}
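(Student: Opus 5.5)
The plan is to argue by duality, reducing \eqref{eq:improved-Poincare} to a weighted estimate for a right inverse of the divergence. This is the route of Dur\'an~\cite{Duran12}. Since $f$ has zero mean,
\[
\|f\|_{L^2(\Omega)}=\sup\Bigl\{\int_\Omega fg\,dx:\ g\in L^2(\Omega),\ \int_\Omega g=0,\ \|g\|_{L^2}\le1\Bigr\},
\]
because replacing $g$ by $g-g_\Omega$ does not change $\int fg$ and does not increase the norm. For each such $g$ I would construct a vector field $\mathbf u$ vanishing on $\partial\Omega$ (say $\mathbf u\in H^1_0$ or a suitable limit) with
\[
\operatorname{div}\mathbf u=g\quad\text{in }\Omega,\qquad \bigl\|\dist(\cdot,\partial\Omega)^{-1}\mathbf u\bigr\|_{L^2(\Omega)}\le C(R,\rho)\|g\|_{L^2(\Omega)}.
\]
Granting this, integration by parts and Cauchy--Schwarz give
\[
\int_\Omega fg=-\int_\Omega\nabla f\cdot\mathbf u\le\|d\,\nabla f\|_{L^2}\,\|d^{-1}\mathbf u\|_{L^2}\le C\|d\,\nabla f\|_{L^2},
\]
where $d=\dist(\cdot,\partial\Omega)$. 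Taking the supremum over $g$ yields the lemma.

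To construct $\mathbf u$ I would use the Bogovskii operator attached to the ball $B$. Fix $\omega\in C_c^\infty(B)$ with $\int\omega=1$ and $|\nabla^k\omega|\le C\rho^{-2-k}$, and set
\[
\mathbf u(x)=\int_\Omega g(y)\,\mathbf G(x,y)\,dy,\qquad \mathbf G(x,y)=\int_0^1\frac{x-y}{s}\,\omega\Bigl(y+\frac{x-y}{s}\Bigr)\frac{ds}{s^{2}}.
\]
Star-shapedness with respect to $B$ guarantees that $\mathbf G(x,\cdot)$ is supported on the cone $\mathcal C_x$ of points $y$ on segments joining $x$ to points of $B$, so the construction stays in $\Omega$. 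Together with $\int g=0$, the standard computation gives $\operatorname{div}\mathbf u=g$ and $\mathbf u=0$ on $\partial\Omega$, and the kernel satisfies $|\mathbf G(x,y)|\le C|x-y|^{-1}$.

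The key geometric input is that the convex hull of $\{x\}\cup B$ lies in $\Omega$. Consequently, for $y\in\mathcal C_x$ the point $x$ is at distance at most about $|x-y|$ from $y$, while $y$ sits inside a cone of aperture comparable to $\rho/R$. This gives
\[
d(y)\le d(x)+|x-y|,\qquad |x-y|\ \ge\ c(\rho,R)\,d(x)\ \text{ fails only when } y \text{ is close to } x .
\]
I would then split $\mathbf u$ into a near part, with $|x-y|<d(x)/2$, and a far part. The near part is a truncated Calder\'on--Zygmund type integral with kernel of size $|x-y|^{-1}$ localized at scale $d(x)$; dividing by $d(x)$ gives a kernel bounded by $C\,d(x)^{-1}|x-y|^{-1}\chi_{\{|x-y|<d(x)\}}$, and Schur's test bounds this in $L^2$. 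The far part has kernel bounded by $C|x-y|^{-2}$ on the cone region; I would handle it with a Hardy-type inequality by writing it as an average, along rays toward $B$, of the one-dimensional Hardy operator $t^{-1}\int_0^t$.

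I expect the main obstacle to be the $L^2$ bound for the far part, $d(x)^{-1}|\mathbf u|$, because there the kernel is only borderline integrable. The first thing I would try is polar coordinates centred at points of $B$ together with the one-dimensional Hardy inequality, using that along each ray the distance to $\partial\Omega$ is comparable to the distance to the boundary point of the ray, with constants depending on $R/\rho$. If this becomes delicate, I would simply invoke Dur\'an's theorem~\cite[Theorem~5.3]{Duran12}, which states precisely this weighted divergence estimate for domains star-shaped with respect to a ball, with constants depending only on $R$ and $\rho$.
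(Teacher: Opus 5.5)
Your proposal ends where the paper's proof does. The paper gives no argument beyond invoking Dur\'an's Theorem~5.3, and your fallback is the same citation. Your duality reduction (test against $g\in L^2$ with zero mean, write $g=\operatorname{div}\mathbf u$ with $\mathbf u\in H^1_0(\Omega)$ and $\|d^{-1}\mathbf u\|_{L^2}\le C\|g\|_{L^2}$, integrate by parts, apply Cauchy--Schwarz) is the standard way such improved Poincar\'e inequalities are derived from estimates for the divergence. As a proof that closes with that citation, it is fine.

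The self-contained kernel sketch, however, would not go through as written.

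\emph{First}, the support is reversed. $\mathbf G(x,y)\neq0$ forces $x=(1-s)y+sz$ with $z\in\operatorname{supp}\omega$. So it is $\mathbf G(\cdot,y)$ that lives in $\operatorname{conv}(\{y\}\cup B)\subset\Omega$, which is what makes $\mathbf u$ vanish on $\partial\Omega$. For fixed $x$, the relevant points $y$ lie beyond $x$ as seen from $B$, not between $x$ and $B$.

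\emph{Second}, the geometric inequality you need is the opposite of the one you state. Take $B=B_\rho(c)$ and $\operatorname{supp}\omega\subset B_{\rho/2}(c)$. Then $B_{s\rho/2}(x)\subset(1-s)y+sB\subset\Omega$ and $|x-y|=s|z-y|\le sR$. Hence $|x-y|\le(2R/\rho)\,d(x)$ on the support of the kernel. Only this gives your far-part bound $d(x)^{-1}|\mathbf G(x,y)|\le C|x-y|^{-2}$. The splitting condition $|x-y|\ge d(x)/2$ by itself yields the reverse inequality.

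\emph{Third}, even then the unweighted Schur test fails for the far part. For fixed $y$, the integral of $|x-y|^{-2}$ over $\operatorname{conv}(\{y\}\cup B)\setminus B_{d(y)}(y)$ grows like $\log(R/d(y))$ as $y\to\partial\Omega$. A Schur test with test function $d^{-1/2}$, or the ray-wise Hardy inequality you mention, is needed to close it.

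The shortest self-contained route avoids the kernel analysis altogether. Combine the unweighted bound $\|\nabla\mathbf u\|_{L^2}\le C(R/\rho)\|g\|_{L^2}$, which is the main theorem of Dur\'an's paper, with the Hardy inequality $\|\mathbf u/d\|_{L^2}\le C\|\nabla\mathbf u\|_{L^2}$ for $\mathbf u\in H^1_0(\Omega)$. That Hardy inequality holds on domains star-shaped with respect to a ball, since they are Lipschitz with character controlled by $R/\rho$. It holds with $C=2$ on the convex domains to which the paper applies the lemma.
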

	The oscillation estimate of Magnanini--Poggesi~\cite[Theorem~2.7]{MagnaniniPoggesiInterpolation} implies the second. 
	\begin{lemma}\label{lem:critical-interpolation}
		Let $\Omega\subset\R^2$ satisfy the $(\theta,a)$-uniform interior cone condition. Then every
		$f\in W^{1,\infty}(\Omega)$ satisfies
		\begin{equation}\label{eq:critical-interpolation}
			\osc_{\overline \Omega}f
			\le C\|\nabla f\|_{L^2(\Omega)}
			\log\left(M\frac{\|\nabla f\|_{L^\infty(\Omega)}}
			{\|\nabla f\|_{L^2(\Omega)}}\right),
		\end{equation}
		where the positive constants $C$ and $M$ depend only on $\theta$, $a$, and $|\Omega|$.
	\end{lemma}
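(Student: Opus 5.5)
The plan is a dyadic chaining (telescoping averages) argument along the interior cones. It rests on the fact that in dimension two the Poincaré inequality on a set of size $r$ controls mean oscillation by the \emph{scale-invariant} quantity $\|\nabla f\|_{L^2}$. Write $G:=\|\nabla f\|_{L^2(\Omega)}$ and $L:=\|\nabla f\|_{L^\infty(\Omega)}$, and assume $G>0$, since otherwise $f$ is constant. By H\"older, $G\le L|\Omega|^{1/2}$, so the ratio $L/G$ is bounded below by $|\Omega|^{-1/2}$. Hence, after enlarging $M$, any bound of the form $\osc f\le C G\,(1+\log(L/G))$ can be rewritten as \eqref{eq:critical-interpolation}. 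It therefore suffices to prove this weaker-looking bound.

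\emph{Step 1 (local telescoping along a cone).} Fix $x\in\Omega$ and let $\mathcal C_x\subset\Omega$ be a cone with vertex $x$, opening $\theta$ and height $a$. For $r\le a$ let $S_r:=\mathcal C_x\cap B_r(x)$, a truncated sector whose shape is uniform in $r$ and depends only on $\theta$. The Poincaré inequality on $S_{2r}$ (a convex set with uniform eccentricity) gives
\[
|f_{S_r}-f_{S_{2r}}|\le C\,r\Bigl(\frac{1}{|S_{2r}|}\int_{S_{2r}}|\nabla f|^2\Bigr)^{1/2}\le C\,\|\nabla f\|_{L^2(S_{2r})}\le C\,G ,
\]
with $C=C(\theta)$; here $|S_{2r}|\sim r^2$ is where dimension two is used. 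I then take $\rho=a\,2^{-N}$, sum the $N$ dyadic steps, and use the trivial bound $|f(x)-f_{S_\rho}|\le L\rho$. This yields $|f(x)-f_{S_a}|\le CNG+La2^{-N}$. Choosing $N\approx\log_2(aL/G)$, or $N=0$ if this is negative, gives $|f(x)-f_{S_a}|\le CG\,(1+\log(L/G))$, with constants depending on $\theta$ and $a$.

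\emph{Step 2 (connecting the top-scale sectors).} It remains to show that $|f_{S_a(x)}-f_{S_a(y)}|\le C G$ for all $x,y$, with $C=C(\theta,a,|\Omega|)$. Each top sector $S_a(x)$ contains a ball of radius $c(\theta)a$. The plan is as follows.
\begin{enumerate}[label=\textup{(\alph*)}]
\item Show that $\Omega$ is covered by a bounded number $N_0=N_0(\theta,a,|\Omega|)$ of such balls. Each point owns a sector of area $\gtrsim a^2$, so a Vitali-type disjointness argument bounds the number of well-separated points by $C|\Omega|/a^2$.
\item Show that the balls can be linked through chains of overlapping sectors of length at most $N_0$. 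This uses the connectedness of $\Omega$ together with the cone condition: any point lies within distance $a$ of the center of a ball of radius $ca$ in its own sector.
\item Along each chain, apply the same averaged-Poincaré comparison to consecutive overlapping pieces. Each step costs $CG$, so the total is $\le CN_0G$.
\end{enumerate}
Combining Steps 1 and 2 via the triangle inequality gives
\[
|f(x)-f(y)|\le C\,G\,(1+\log(L/G)),
\]
and hence \eqref{eq:critical-interpolation}. Since $f\in W^{1,\infty}$ is Lipschitz, the bound extends to the closure $\overline\Omega$.

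The step I expect to be the main obstacle is Step 2: producing a Harnack-type chain of uniformly many overlapping sectors purely from the $(\theta,a)$ cone condition and the area bound. The delicate point is ensuring that consecutive pieces overlap in a set of measure comparable to $a^2$, so that the comparison of averages is legitimate. If a direct chain is awkward, I would first try to replace it by a single Poincaré inequality on $\Omega$ itself, since the cone condition plus finite area gives a John-type domain. That reduces the comparison to $|f_{S_a(x)}-f_\Omega|\le |S_a(x)|^{-1/2}\|f-f_\Omega\|_{L^2(\Omega)}\le C G$, which bypasses the combinatorics of chains.
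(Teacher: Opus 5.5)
\textbf{There is a genuine gap in Step~2.} For reference, the paper does not prove this lemma; it cites Magnanini--Poggesi. Your Step~1 is sound. It is essentially a dyadic version of the pointwise cone bound on which that estimate rests. Your reduction to $CG\bigl(1+\log_+(aL/G)\bigr)$ is also fine, after enlarging $M$ using $L/G\ge|\Omega|^{-1/2}$ and $|\Omega|\gtrsim\theta a^2$.

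\textbf{Why Step~2 fails.} The claim $|f_{S_a(x)}-f_{S_a(y)}|\le CG$ with $C=C(\theta,a,|\Omega|)$ is false. So is your fallback: a uniform interior cone condition plus an area bound gives neither a John condition nor an $L^2$-Poincar\'e inequality with controlled constant. Take $\Omega_w=B_1(e_1)\cup B_1(-e_1)\cup\bigl((-w,w)\times(-w/2,w/2)\bigr)$.
\begin{itemize}
\item Every point outside the tiny lens between the two disks lies in one of the open unit disks. It therefore has a cone of opening $\pi/4$ and height $1/3$ inside $\Omega_w$.
\item A point of the lens gets a cone of the same size whose axis is tilted by $\pi/4$ into the right disk (downward if $x_2\ge0$, upward otherwise). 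A direct check shows it stays in $\Omega_w$ for all small $w$.
\item Hence $\Omega_w$ satisfies a $(\pi/4,1/3)$ cone condition uniformly in $w$, with $|\Omega_w|\to2\pi$.
\end{itemize}
Now let $f_w(x)=\operatorname{sgn}(x_1)\min\{1,\log_+(|x|/w)/\log(1/(2w))\}$.
\begin{itemize}
\item It is Lipschitz on $\Omega_w$, since it vanishes on $\Omega_w\cap\{x_1=0\}$.
\item It satisfies $\|\nabla f_w\|_{L^2}^2\le 2\pi/\log(1/(2w))\to0$.
\item Yet $f_w\equiv\pm1$ on every top sector with vertex $\pm e_1$.
\end{itemize}
So no chain of sectors with overlaps of order $a^2$ can cross the neck, and crossing it cannot cost $O(G)$. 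It is controlled only with the logarithm; here $\log(L/G)\approx\log(1/w)$, which is consistent with the lemma.

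\textbf{How to repair it.} The logarithm has to be paid a bounded number of times, not avoided.
\begin{itemize}
\item Each $S_a(x)$ contains a ball $B_{ca}(z_x)$. Take a maximal $ca/2$-separated subset $\zeta_1,\dots,\zeta_K$ of the centers $z_x$. Packing gives $K\le C|\Omega|/(\theta a^2)$.
\item Every $S_a(x)$ contains some $B_{ca/2}(\zeta_j)$. Within such a class, your Poincar\'e comparison of sector averages does cost $CG$.
\item Between classes, join $x$ to $y$ by a path in $\Omega$ and discretize it with mesh $\epsilon$, so that consecutive points are joined by segments in $\Omega$. Use a last-exit selection: from the current class, jump from its last occurrence to the next point. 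Then each class is used at most once, so there are at most $K$ transitions.
\item At each transition between consecutive points $p,p'$, apply Step~1 at both $p$ and $p'$. This costs $2CG\bigl(1+\log_+(aL/G)\bigr)+L\epsilon$.
\end{itemize}
Letting $\epsilon\to0$ gives $\osc f\le CK\,G\bigl(1+\log_+(aL/G)\bigr)$, which suffices. Your fallback via a global Poincar\'e inequality is legitimate in the paper's actual application, where $\Omega$ is convex with $r_\Omega\ge r_*$ and $\diam\Omega\le D_*$. It does not cover the lemma as stated.
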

	
	We now derive the $L^2$ and $L^\infty$ estimates of $\nabla h$.
	
	\begin{lemma}
		\label{lem:small-gradient-h}
		Under the assumptions of Lemma~\ref{lem:minimum-depth}, there exists a constant $C>0$, depending only on $r_*$ and $D_*$, such that, for every $\varepsilon<\varepsilon_0$,
		\begin{equation}\label{eq:L2-gradient-h}
			\|\nabla h\|_{L^2(\Om)}\le C\varepsilon^{1/2}.
		\end{equation}
		Moreover,
		\begin{equation}\label{eq:Linfty-gradient-h}
			\|\nabla h\|_{L^\infty(\Om)}\le C.
		\end{equation}
	\end{lemma}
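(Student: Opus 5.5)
The $L^\infty$ bound \eqref{eq:Linfty-gradient-h} comes directly from the definitions, so I treat it first. Since $h=q-v$ with $\nabla q(x)=x-z$, we have
\[
|\nabla h|\le |x-z|+|\nabla v|.
\]
Both $x$ and $z$ lie in $\overline\Om$, so $|x-z|\le\diam(\Om)\le D_*$ by Proposition~\ref{prop:uniform-geometry}. By \eqref{eq:global-gradient} together with $c_\Om\le1$ from \eqref{eq:c-om<1}, $\|\nabla v\|_{L^\infty(\Om)}\le\sqrt{1+2\varepsilon}\le 2$ for $\varepsilon<\varepsilon_0\le 1$. Hence $\|\nabla h\|_{L^\infty(\Om)}\le D_*+2$.

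For the $L^2$ bound \eqref{eq:L2-gradient-h}, I will apply the improved Poincar\'e inequality of Lemma~\ref{lem:improved-Poincare} to each component $f=\partial_i h$ ($i=1,2$), and feed the result into the weighted Hessian estimate of Lemma~\ref{lem:weighted-hessian}. A convex domain is star-shaped with respect to its inball. By Proposition~\ref{prop:uniform-geometry} the inball has radius at least $r_*$ and the diameter is at most $D_*$, so the constant in \eqref{eq:improved-Poincare} depends only on $r_*,D_*$. Let $m_i$ denote the mean of $\partial_i h$ over $\Om$. Then
\[
\|\partial_i h-m_i\|_{L^2(\Om)}\le C\|d\,\nabla\partial_i h\|_{L^2(\Om)}\le C\Bigl(\int_\Om d^2|D^2h|^2\Bigr)^{1/2}\le C\varepsilon^{1/2}.
\]
Summing over $i$ gives $\|\nabla h-m\|_{L^2(\Om)}\le C\varepsilon^{1/2}$, where $m=(m_1,m_2)$.

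The remaining and main step is to show $|m|\le C\varepsilon^{1/2}$, and here I use the normalization \eqref{eq:grad-h-z}, $\nabla h(z)=0$, together with the interior depth $\dist(z,\partial\Om)\ge d_0$ from Lemma~\ref{lem:minimum-depth}. Each $\partial_ih-m_i$ is harmonic, so the mean value property on $B_{d_0}(z)\subset\Om$ and Cauchy--Schwarz give
\[
|m_i|=|\partial_ih(z)-m_i|=\Bigl|\fint_{B_{d_0}(z)}(\partial_ih-m_i)\Bigr|\le (\pi d_0^2)^{-1/2}\|\partial_ih-m_i\|_{L^2(\Om)}\le C\varepsilon^{1/2}.
\]
Then $\|\nabla h\|_{L^2}\le\|\nabla h-m\|_{L^2}+|m||\Om|^{1/2}\le C\varepsilon^{1/2}$, with $C$ depending only on $r_*$ and $D_*$ (through $d_0$, the Dur\'an constant, and $|\Om|=\pi$).

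One technical point must be checked: applying Lemma~\ref{lem:improved-Poincare} requires $\partial_ih\in H^1(\Om)$. This holds because $\Om$ is smooth and strictly convex under the standing assumptions of Lemma~\ref{lem:boundary-gradient}, so $h$ is smooth up to the boundary. The constants never depend on the smoothness, so nothing further is needed.
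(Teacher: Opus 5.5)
Your proposal is correct and follows essentially the same route as the paper: improved Poincar\'e on each $\partial_i h$ combined with the weighted Hessian bound, then the mean value property at $z$ with $\nabla h(z)=0$ to control the mean $m$, and the $L^\infty$ bound read off from $\nabla h=x-z-\nabla v$ and \eqref{eq:global-gradient}. The only cosmetic difference is that you average over $B_{d_0}(z)$ rather than the paper's $B_{d_0/2}(z)$; both balls lie in $\Om$.
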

	
	\begin{proof}
		For each $i$, let $a_i$ denote the mean value of $\partial_i h$ over $\Omega$; thus $\partial_i h-a_i$ has zero mean. From  Lemma~\ref{lem:improved-Poincare}  we deduce 
		\[
		\|\partial_i h-a_i\|_{L^2(\Om)}
		\le C\|d\nabla(\partial_i h)\|_{L^2(\Om)}.
		\]
		Let $a=(a_1,a_2)$, then  
		Lemma~\ref{lem:weighted-hessian}  presents
		\begin{equation}\label{eq:grad-minus-a}
			\|\nabla h-a\|_{L^2(\Om)}\le C\varepsilon^{1/2}.
		\end{equation}
		
		Owing to Lemma~\ref{lem:minimum-depth}, the ball
		$B:=B_{d_0/2}(z)$ is contained in $\Om$. Each component $\partial_i h$ is
		harmonic, and  then \eqref{eq:grad-h-z} entails  $\partial_i h(z)=0$. The mean-value
		property therefore yields
		\[
		0=\fint_B\partial_i h,
		\qquad
		|a_i|=\left|\fint_B(a_i-\partial_i h)\right|
		\le |B|^{-1/2}\|\partial_i h-a_i\|_{L^2(B)}.
		\]
		Thus $|a|\le C\varepsilon^{1/2}$.Furthermore, \eqref{eq:L2-gradient-h} follows from 
		\eqref{eq:grad-minus-a}.
		By the definition of  \eqref{eq:q-h-def}, one has
		\[
		\nabla h=x-z-\nabla v.
		\]
		So we derive from \eqref{eq:uniform-geometry} and \eqref{eq:global-gradient} that 
		\eqref{eq:Linfty-gradient-h}.
	\end{proof}
	
	\begin{corollary} 
		\label{cor:oscillation-h}
		Under the assumptions of Lemma~\ref{lem:minimum-depth}, there exists $C>1$, depending only on the constants in \eqref{eq:uniform-geometry}, such that
		\begin{equation}\label{eq:osc-h}
			\osc_{\overline\Om}h
			\le C\varepsilon^{1/2}\log\frac{C}{\varepsilon}
		\end{equation}
		for all $\varepsilon<\varepsilon_{0}$.
	\end{corollary}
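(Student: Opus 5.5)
The plan is to apply Lemma~\ref{lem:critical-interpolation} to $f=h$ on $\Omega$, feeding in the two gradient bounds of Lemma~\ref{lem:small-gradient-h}. First I check that $\Omega$ lies in a uniform analytic class. By Proposition~\ref{prop:uniform-geometry}, $\Omega$ is convex, contains a ball of radius $r_*$, and has diameter at most $D_*$. Hence it satisfies a $(\theta,a)$-uniform interior cone condition with $\theta,a$ depending only on $r_*,D_*$, and $|\Omega|=\pi$ is fixed. So the constants $C,M$ in \eqref{eq:critical-interpolation} depend only on the constants in \eqref{eq:uniform-geometry}. Moreover $h=q-v$ is smooth up to the boundary, since $\Omega$ is smooth and strictly convex. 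Thus $h\in W^{1,\infty}(\Omega)$, and the lemma applies.

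Write $X:=\|\nabla h\|_{L^2(\Omega)}$ and $Y:=\|\nabla h\|_{L^\infty(\Omega)}$. By \eqref{eq:L2-gradient-h} and \eqref{eq:Linfty-gradient-h}, $X\le C_1\varepsilon^{1/2}$ and $Y\le C_2$. If $X=0$, then $h$ is constant and there is nothing to prove. Otherwise \eqref{eq:critical-interpolation} gives
\[
\osc_{\overline\Omega}h\le C\,X\log\frac{MY}{X}\le C\,X\log\frac{MC_2}{X}.
\]

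The only subtle step is that the right-hand side involves $X$ both as a prefactor and inside the logarithm, so the upper bound on $X$ cannot simply be substituted into the logarithm. I handle this with monotonicity. The function $g(s)=s\log(A/s)$ is increasing on $(0,A/e]$. Also, $X\le |\Omega|^{1/2}Y=\sqrt\pi\,Y$, so we may enlarge $M$ to ensure $MC_2/X\ge e$ always. Then $X\le\min\{C_1\varepsilon^{1/2},A/e\}$ with $A=MC_2$, and if $\varepsilon_0$ is small enough that $C_1\varepsilon^{1/2}\le A/e$, we get
\[
\osc_{\overline\Omega}h\le C\,C_1\varepsilon^{1/2}\log\frac{A}{C_1\varepsilon^{1/2}}
\le C'\varepsilon^{1/2}\log\frac{C'}{\varepsilon}.
\]
The last step uses $\log(A/(C_1\varepsilon^{1/2}))=\tfrac12\log(A^2/(C_1^2\varepsilon))$. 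Enlarging the constant so that $C>1$ gives \eqref{eq:osc-h} for all $\varepsilon<\varepsilon_0$, possibly after shrinking $\varepsilon_0$. All constants depend only on $r_*$ and $D_*$.
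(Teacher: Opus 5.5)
Your proposal is correct and follows the paper's own proof. Like the paper, you apply Lemma~\ref{lem:critical-interpolation} to $h$ with the bounds \eqref{eq:L2-gradient-h}--\eqref{eq:Linfty-gradient-h}; you additionally spell out the monotonicity of $s\mapsto s\log(A/s)$ on $(0,A/e]$, which the paper absorbs into ``after possibly enlarging the constant $C$.'' You do not actually need to shrink $\varepsilon_0$: enlarging $M$ until $MC_2/e\ge C_1\varepsilon_0^{1/2}$ gives the estimate for all $\varepsilon<\varepsilon_0$.
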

	
	\begin{proof}
		Applying Lemma~\ref{lem:critical-interpolation} to $h$  and appealing to \eqref{eq:L2-gradient-h}--\eqref{eq:Linfty-gradient-h},  we obtain   \eqref{eq:osc-h} after possibly enlarging the constant $C$.
	\end{proof}
	
	Now we are ready to prove the quantitative estimate in the section.
	
	\begin{proof}[Proof of Theorem~\ref{thm:main}]
		We first assume that $\Omega$ is smooth and strictly convex, so that the assumptions of Lemma~\ref{lem:minimum-depth} hold. Let $z$ be the minimum point of $v$ chosen above, and set
		\[
		\rho_i:=\min_{x\in\partial\Om}|x-z|,
		\qquad
		\rho_e:=\max_{x\in\partial\Om}|x-z|.
		\]
		\eqref{eq:eps-def}  and  \eqref{eq:q-h-def}   lead to
		\begin{align}
			\frac12(\rho_e^2-\rho_i^2)
			&=\osc_{\partial\Om}q\le\osc_{\partial\Om}h+\osc_{\partial\Om}v\notag\\
			&\le\osc_{\overline\Om}h+\varepsilon.
			\label{eq:radii-q}
		\end{align}
		By the definition of $\rho_e$, we  see that $\Om\subset B_{\rho_e}(z)$. Therefore
		\begin{equation*}
			\pi=|\Om|\le  |B_{\rho_e}(z)|=\pi  \rho_e^2.
		\end{equation*}
		i.e., $\rho_e\ge1$.  We further see that  $\rho_e+\rho_i\ge1$  and
		\[
		\rho_e-\rho_i
		\le\rho_e^2-\rho_i^2
		\le2\osc_{\overline\Om}h+2\varepsilon.
		\]
		
		The definitions of the global inradius and circumradius imply
		\[
		R_\Om\le\rho_e,
		\qquad
		r_\Om\ge\rho_i,
		\]
		and so
		\[
		R_\Om-r_\Om\le\rho_e-\rho_i.
		\]
		Corollary~\ref{cor:oscillation-h} now yields
		\eqref{eq:log-estimate} 	in view of $\varepsilon=2 O(\Om)$.

		For completeness, we indicate the standard passage to arbitrary convex
		domains. Let $\Om_j$ be smooth strictly convex bodies converging to $\Om$ in
		the Hausdorff topology, followed by a homothety so that $|\Om_j|=\pi$.
		Support-function mollification provides such an approximation. Then
		\[
		r_{\Om_j}\to r_\Om,
		\qquad
		R_{\Om_j}\to R_\Om,
		\qquad
		P(\Om_j)\to P(\Om).
		\]
		The normalized Neumann solutions converge under this uniform convex-domain
		approximation, and the boundary oscillations satisfy
		\[
		O(\Om_j)\to O(\Om).
		\]
		Indeed, let $u_j$ be the normalized Neumann torsion function on $\Omega_j$, with $\min_{\partial\Omega_j}u_j=0$. By Geng--Shen~\cite{GengShen}, for any fixed $p>2$ the functions $u_j$ are uniformly bounded in $W^{1,p}(\Omega_j)$. By the uniform extension property of the domains $\Omega_j$, suitable extensions $\widetilde u_j$ are uniformly bounded in $W^{1,p}_0(\mathbb{R}^2)$ and therefore have a uniform H\"older modulus of continuity. After passing to a subsequence, $\widetilde u_j$ converges locally uniformly to a function $u_\infty$, whose restriction to $\Omega$ is precisely the normalized Neumann torsion function $u$. Uniform convergence, together with Hausdorff convergence of the boundaries, shows 
		\[
		\osc_{\partial\Omega}u
		=\lim_{j\to\infty}\osc_{\partial\Omega_j}u_j.
		\]
		Hence $O(\Omega_j)\to O(\Omega)$ as $j\to\infty$.

		Applying \eqref{eq:log-estimate} to $\Om_j$ and passing to the limit
		gives the estimate for $\Om$, completing the proof of Theorem~\ref{thm:main} for arbitrary bounded planar convex domains.
		
	\end{proof}
	
	\begin{remark}
		The argument of this section deliberately uses only the quadratic lower bound
		$\varepsilon-v\ge d^2/2$ and a critical $L^2$ interpolation estimate. It therefore
		produces the preliminary rate in Theorem~\ref{thm:main}. In the next section we
		use two additional pieces of structure---a uniform linear boundary-growth bound
		and the fact that $h_\nu$ is itself controlled by the radial defect---to upgrade
		this estimate to the sharp linear scale.
	\end{remark}

	\section{Sharp stability in the planar convex class: proof of Theorem \ref{thm:intro-quantitative}}
	\label{sec:sharp-stability}
	
	We now upgrade the qualitative compactness and the estimates of the preceding section to the optimal linear scale stated in Theorem \ref{thm:intro-quantitative}.

	First, we need the following estimate, which is the uniform John-domain version of the two-dimensional estimate underlying Lemma~5.2 of Magnanini--Molinarolo--Poggesi~\cite{MagnaniniMolinaroloPoggesi2024}. We state it separately because the constants in our full convex class must not depend on the curvature of a smooth approximation.
	
	\begin{lemma}\label{lem:uniform-weighted-oscillation}
		Let $\Omega\subset\R^2$ be a bounded convex domain with
		\[
		\diam(\Omega)\le D,
		\]
		and let $z\in\Omega$ satisfy
		\[
		\dist(z,\partial\Omega)\ge d_0>0.
		\]
		If $h$ is harmonic in $\Omega$ and $\nabla h(z)=0$, then
		\begin{equation}\label{eq:uniform-weighted-oscillation}
			\osc_{\overline\Omega}h
			\le C
			\left(\int_\Omega d(x,\partial\Omega)|D^2h(x)|^2\,dx\right)^{1/2},
		\end{equation}
		where $C$ depends only on $D$ and $d_0$.
	\end{lemma}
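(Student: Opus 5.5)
We work with the harmonic gradient $g:=\nabla h$, which is harmonic, vanishes at $z$, and satisfies $|\nabla g|=|D^2h|$. The target is a weighted $L^2$ bound on $\nabla g$ with weight $d:=d(x,\partial\Omega)$ to the first power, whereas Lemma~\ref{lem:improved-Poincare} uses weight $d$ in the $L^2$ estimate. The plan has two steps: first control $g$ in a weighted space by $\|d^{1/2}\nabla g\|_{L^2}$, then integrate $g$ along John curves to bound the oscillation of $h$.

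\emph{Step 1 (weighted Poincar\'e for $g$).} Convexity, $\diam(\Omega)\le D$ and $B_{d_0}(z)\subset\Omega$ imply that $\Omega$ is star-shaped with respect to $B_{d_0}(z)$. It is then a John domain with constants depending only on $D,d_0$, and its area satisfies $\pi d_0^2\le|\Omega|\le \pi D^2$. For such domains we invoke the weighted Poincar\'e inequality with power weights (Dur\'an's family, of which Lemma~\ref{lem:improved-Poincare} is the case $\beta=1$):
\[
\|d^{-1/2}(f-f_\Omega)\|_{L^2(\Omega)}\le C\|d^{1/2}\nabla f\|_{L^2(\Omega)}.
\]
If this exact form is not available, we use the two-weight Hardy-type version $\|f-f_\Omega\|_{L^2(d^{-1+\beta}dx)}\lesssim \|d^\beta\nabla f\|_{L^2}$ with $\beta=1/2$. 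We apply it to $f=\partial_i h$. Removing the mean uses the same trick as in Lemma~\ref{lem:small-gradient-h}: since $\partial_ih(z)=0$ and $\partial_ih$ is harmonic, the mean value over $B_{d_0/2}(z)$ vanishes, so $|a_i|\le C\|d^{1/2}\nabla g\|_{L^2}$. Writing $I:=\int_\Omega d|D^2h|^2$, this gives
\[
\int_\Omega \frac{|\nabla h|^2}{d}\,dx\le C I.
\]

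\emph{Step 2 (from weighted gradient to oscillation).} For $x\in\Omega$, take the segment $\gamma$ from $x$ to $z$. Star-shapedness gives the cone condition $d(\gamma(t))\ge c\,t$, where $t$ is the distance from $x$ along the segment and $c=c(D,d_0)$. Then $|h(x)-h(z)|\le\int_\gamma|\nabla h|$. Estimating this pointwise directly fails, since $\int dt/t$ diverges logarithmically. Instead we exploit harmonicity: by the mean value property on $B_{d(y)/2}(y)$,
\[
|\nabla h(y)|\le C d(y)^{-1}\Big(\int_{B_{d(y)/2}(y)}|\nabla h|^2\Big)^{1/2}\le C d(y)^{-1/2}\Big(\int_{B_{d(y)/2}(y)}\frac{|\nabla h|^2}{d}\Big)^{1/2}.
\]
Summing over a Whitney chain along $\gamma$, consisting of balls of radius $\sim 2^{-k}$, yields $\sum_k 2^{-k/2}\cdot\|\,\cdot\,\|_{L^2(Q_k)}$. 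Cauchy--Schwarz then gives $\le C(\sum_k 2^{-k})^{1/2}(\int |\nabla h|^2/d)^{1/2}\le C I^{1/2}$. We bound $|h(x)-h(z)|$ for every $x\in\Omega$ this way, and extend to $\overline\Omega$ by continuity, or by a limit $x_j\to x$ if $h$ is only continuous up to the boundary (otherwise the oscillation is interpreted on $\Omega$). The conclusion is $\osc h\le 2C I^{1/2}$.

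\emph{Main obstacle.} The expected difficulty is Step 1: getting the weighted Poincar\'e inequality with the precise weights $d^{-1/2}$ and $d^{1/2}$ and constants uniform in $D,d_0$ only. If citing Dur\'an--Muschietti--type results is insufficient, I would first try to prove the needed Hardy inequality directly. The approach is to use polar coordinates centered at $z$, in which $\Omega=\{r<\rho(\theta)\}$ with $\rho$ Lipschitz with constant depending on $D,d_0$ and $d\simeq \rho(\theta)-r$. A one-dimensional weighted Hardy inequality in $r$ for each $\theta$, combined with the mean-zero normalization on $B_{d_0/2}(z)$, should then give the estimate.
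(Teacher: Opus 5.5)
\textbf{There is a genuine gap in Step~1.} The inequality that step relies on is false, not merely hard to prove. The bound $\|d^{-1/2}(f-f_\Omega)\|_{L^2(\Omega)}\le C\|d^{1/2}\nabla f\|_{L^2(\Omega)}$ is the critical case of the Hardy inequality $\int_\Omega|f-c|^2d^{s-2}\lesssim\int_\Omega|\nabla f|^2d^{s}$. Your case is $s=1=p-1$ with $p=2$, and there no such estimate holds under any normalization. Lemma~\ref{lem:improved-Poincare} is the harmless case $s=2$.

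Already $f=x_1$ on $\Omega=B_1$ is a counterexample: $\int_{B_1}x_1^2(1-|x|)^{-1}\,dx=\infty$, while the right-hand side is finite. Consequently your intermediate claim $\int_\Omega|\nabla h|^2/d\le CI$ is false too. For $\Omega=B_1$, $z=0$, $h=x_1^2-x_2^2$ one has $\nabla h(0)=0$ and $I=8\int_{B_1}(1-|x|)\,dx=8\pi/3$, but $\int_{B_1}4|x|^2(1-|x|)^{-1}\,dx=\infty$.

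More generally, $\int_\Omega d^{-1}\,dx=\infty$ on every bounded Lipschitz domain. So $\int_\Omega|\nabla h|^2/d$ is infinite as soon as $\nabla h$ is continuous up to $\partial\Omega$ and not identically zero there. The same divergence breaks your step of adding back the constant vector $a$. The repair sketched under ``Main obstacle'' cannot work either. In the variable $t=\rho(\theta)-r$, the one-dimensional inequality $\int_0f^2t^{-1}\,dt\lesssim\int_0tf'(t)^2\,dt$ becomes, after $t=e^{-s}$, the inequality $\int f^2\,ds\lesssim\int f_s^2\,ds$ on a half-line. That inequality fails on long plateaus.

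\textbf{What can be salvaged.} Only a subcritical bound $\int_\Omega|\nabla h|^2d^{-\gamma}\le CI$ with $\gamma\in(0,1)$ survives; your fallback, with $\gamma=1/2$, is a statement of this kind. It does not imply $\int_\Omega|\nabla h|^2/d\le CI$, but your Step~2 goes through with such a weight. The mean value property now gives $|\nabla h(y)|\lesssim d(y)^{-1+\gamma/2}\bigl(\int_{B_{d(y)/2}(y)}|\nabla h|^2d^{-\gamma}\bigr)^{1/2}$. The Whitney sum then becomes $\sum_k2^{-k\gamma/2}a_k\le C_\gamma\bigl(\sum_ka_k^2\bigr)^{1/2}$.

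You would still need a source for the subcritical bound with constants depending only on $D$ and $d_0$. The natural one is what the paper uses: Magnanini--Poggesi's Hardy--Poincar\'e inequality for harmonic functions on John domains. Take $N=2$, $p=2$, $\alpha=1/2$, $r=4$, with normalization $\nabla h(z)=0$; this gives $\|\nabla h\|_{L^4(\Omega)}\le C\|d^{1/2}D^2h\|_{L^2(\Omega)}$. H\"older then gives your weighted bound for any $\gamma<1/2$.

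At that point the paper finishes more directly. Convexity together with $B_{d_0}(z)\subset\Omega$ gives a uniform interior cone condition. Morrey's inequality then yields $\osc_{\overline\Omega}h\le C\|\nabla h\|_{L^4(\Omega)}$. So your Whitney-chain argument becomes a valid but unnecessary substitute for Morrey.
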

	
	\begin{proof}
		For $x\in\Omega$, consider the segment
		\[
		\gamma_x(t)=(1-t)x+tz,
		\qquad 0\le t\le1.
		\]
		Then one has $$|\gamma_x(t)-x|=t|z-x|\le tD.$$ Moreover,  
		The distance to the boundary is concave  and satisfies  
		\[
		d(\gamma_x(t),\partial\Omega)
		\ge (1-t)d(x,\partial\Omega)+t d(z,\partial\Omega)
		\ge t d_0.
		\]
		Hence, the domain is an $L_0$-John domain with base point $z$, with $L_0$ controlled only by $D/d_0$.
		
		We apply the weighted Hardy--Poincar\'e estimate for harmonic functions in the form of \cite[Corollary~2.3 and Remark~2.4]{MP201}, with
		\[
		N=2,\qquad r=4,\qquad p=2,\qquad \alpha=\frac12,
		\]
		which  joint with	the condition $\nabla h(z)=0$   ensures 
		\begin{equation}\label{eq:L4-from-weighted-Hessian}
			\|\nabla h\|_{L^4(\Omega)}
			\le C\|d(\cdot,\partial\Omega)^{1/2}D^2h\|_{L^2(\Omega)}.
		\end{equation}
		Here the constant $C$ depends only on $D$ and $d_0$,  and the volume dependence is absorbed  due to  $B_{d_0}(z)\subset\Omega\subset B_D(z)$.
		
		Moreover, the convex hull of any boundary point and the ball $B_{d_0/2}(z)$ is contained in $\Omega$. Thus these domains satisfy a uniform interior cone condition, again with parameters controlled only by $D$ and $d_0$. The corresponding uniform Morrey estimate yields
		\[
		\osc_{\overline\Omega}h
		\le C\|\nabla h\|_{L^4(\Omega)}.
		\]
		Combining this inequality with \eqref{eq:L4-from-weighted-Hessian} proves \eqref{eq:uniform-weighted-oscillation}.
	\end{proof}
	
	The next lemma is the crucial new ingredient for obtaining sharp stability estimate beyond the method in Section \ref{sec:quantitative-full-convex}. It replaces the previous elementary quadratic lower bound \eqref{eq:torsion-distance-lower} by the following linear one with universal constant, without assuming a uniform interior sphere condition.
	
	\begin{lemma}\label{lem:linear-boundary-growth}
		Let $\Omega$ be smooth and strictly convex, satisfy $|\Omega|=\pi$ and \eqref{eq:uniform-geometry}, and let $v$ be defined by \eqref{eq:v-def}--\eqref{eq:v-problem}. There exists a universal constant $a_*>0$ such that for all sufficiently small $\varepsilon$,
		\begin{equation}\label{eq:linear-boundary-growth}
			\varepsilon-v(x)
			\ge a_*\,d(x,\partial\Omega).
		\end{equation}
	\end{lemma}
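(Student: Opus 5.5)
We would prove \eqref{eq:linear-boundary-growth} by contradiction and compactness, rescaling at the scale of the distance to the boundary. A direct comparison argument does not give a linear bound. Write $w:=\varepsilon-v$. Then $-\Delta w=2$ in $\Omega$, $0\le w\le\varepsilon$ on $\partial\Omega$, and the inward normal derivative of $w$ equals $c_\Omega$. Comparing $w$ with the positive Dirichlet torsion function, together with the concavity of its square root on convex domains, only yields $w\gtrsim d^2$. This is unavoidable for a Dirichlet comparison, because at a wedge-type point the Dirichlet torsion function genuinely grows quadratically. So the Neumann condition $\partial_\nu w=-c_\Omega$ must be used, and it cannot be exploited pointwise, since we have no curvature control. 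Two facts are uniform in the smooth approximations:
\[
c_\Omega=\frac{2\pi}{P(\Omega)}\ge\frac{2}{D_*},\qquad \|\nabla w\|_{L^\infty(\Omega)}\le\sqrt{1+2\varepsilon}.
\]
The first uses $P(\Omega)\le\pi D_*$; the second is \eqref{eq:global-gradient}. By Theorem~\ref{thm:disk-conv}, in addition, $\Omega$ is Hausdorff-close to a unit disk and $c_\Omega\to1$ as $\varepsilon\to0$.

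Suppose the claim fails. Then there are smooth strictly convex $\Omega_k$ with $|\Omega_k|=\pi$, $\varepsilon_k\to0$, and points $x_k\in\Omega_k$ with $d_k:=d(x_k,\partial\Omega_k)>0$ and $w_k(x_k)\le d_k/k$. We distinguish two cases.

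\textbf{Case 1: $d_k\ge d>0$ along a subsequence.} After translation, $\Omega_k\to B_1$ and, as in the proof of Theorem~\ref{thm:disk-conv}, $w_k\to w_\infty$ uniformly. The limit $w_\infty$ solves $-\Delta w_\infty=2$ in $B_1$, vanishes on $\partial B_1$, and satisfies the limiting weak Neumann relation, so $w_\infty=(1-|x|^2)/2$. This function is bounded below by $d/2$ at distance $\ge d$ from $\partial B_1$, which contradicts $w_k(x_k)\to0$.

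\textbf{Case 2: $d_k\to0$.} Let $y_k\in\partial\Omega_k$ be a nearest boundary point, and rescale:
\[
G_k:=\frac{\Omega_k-y_k}{d_k},\qquad W_k(\xi):=\frac{w_k(y_k+d_k\xi)}{d_k}.
\]
Then $-\Delta W_k=2d_k\to0$, the inward normal derivative of $W_k$ is $c_{\Omega_k}\to1$, $W_k\ge0$ (by the minimum principle, since $w_k$ is superharmonic and nonnegative on $\partial\Omega_k$), and $|\nabla W_k|\le 2$. Moreover $W_k(\xi_k)\to0$ at the point $\xi_k$ corresponding to $x_k$, which lies at distance exactly $1$ from $\partial G_k$. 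Since $\Omega_k$ is convex and Hausdorff-close to the unit disk, every supporting wedge at a boundary point has opening angle close to $\pi$. Hence the $G_k$ converge locally in the Hausdorff sense to a convex set $G$ containing a wedge of opening close to $\pi$; in fact $G$ should be a half-plane. In any case the $G_k$ have locally uniform Lipschitz character. By Arzel\`a--Ascoli, $W_k\to W$ locally uniformly with $W$ Lipschitz. The weak Neumann formulation, tested against compactly supported $\varphi$, passes to the limit exactly as in the proof of Theorem~\ref{thm:disk-conv}. Thus $W\ge0$ is harmonic in $G$ with weak inward normal derivative $1$ on $\partial G$, and $W(\xi_\infty)=0$ at an interior point $\xi_\infty$ with $d(\xi_\infty,\partial G)=1$. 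The strong minimum principle forces $W\equiv0$ on the connected set $G$. This contradicts the nonzero Neumann flux: testing with $\varphi\ge0$ supported near a boundary point gives $\int_{\partial G}\varphi\,d\sigma=0$. No assumption relating $d_k$ and $\varepsilon_k$ is needed here. The boundary values of $W_k$ lie in $[0,\varepsilon_k/d_k]$, and only their nonnegativity is used.

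We expect the main obstacle to be the limit passage in Case 2, namely the locally uniform Lipschitz character of the blow-ups $G_k$ and the convergence of the weak Neumann boundary term on the unbounded limit. This is why we need the near-$\pi$ wedge property, which comes from Hausdorff closeness of the convex domains $\Omega_k$ to the disk; it rules out degenerate limits such as slits. If this turns out to be delicate, we would first try to replace the full compactness argument with a local comparison. The idea is to compare $W$ with the Neumann harmonic function in a truncated wedge of opening $\ge\pi-\eta$, using the flux identity $\int_{\partial(G\cap B_2)}\partial_\nu W=0$ together with the nonnegativity of $W$, to obtain a quantitative lower bound $W(\xi)\ge a$ at distance $1$.
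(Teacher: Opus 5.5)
Your proposal is correct and follows essentially the same route as the paper: contradiction, a dichotomy on whether $d_k$ stays bounded below, a blow-up at scale $d_k$ using the uniform gradient bound, a nonnegative harmonic limit vanishing at an interior point and hence identically zero, and a contradiction with the Neumann flux $c_{\Omega}\ge 2/D_*>0$. The only real difference is that you invoke Theorem~\ref{thm:disk-conv} to identify the limits as the disk and a half-plane and pass the Neumann condition to the limit, whereas the paper uses only \eqref{eq:uniform-geometry} and tests the weak formulation on the blow-ups themselves; convexity and the interior ball $B_1(e_2)$ already give a nondegenerate limit with locally bounded boundary length, so the obstacle you anticipate does not arise.
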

	
	\begin{proof}
		Set
		\[
		w:=\varepsilon-v.
		\]
		Then
		\[
		w\geq 0 \quad \text{in }\Omega,
		\qquad
		-\Delta w=2 \quad \text{in }\Omega,
		\qquad
		\partial_\nu w=-c_\Omega \quad \text{on }\partial\Omega.
		\]
		\eqref{eq:global-gradient}   implies  
		\[
		\|\nabla w\|_{L^\infty(\Omega)}\leq L_*
		\]
		for a universal constant \(L_*>0\). Moreover, note that 
		\[
		P(\Omega)\leq \pi\operatorname{diam}(\Omega)\leq \pi D_*,
		\]
		we have
		\[
		c_\Omega=\frac{2\pi}{P(\Omega)}
		\geq \frac{2}{D_*}
		=:c_*>0.
		\]
		
		Suppose that the conclusion is false. Then there exist smooth convex
		domains \(\Omega_j\), corresponding functions \(w_j\), and points
		\(x_j\in\Omega_j\) such that
		\[
		\frac{w_j(x_j)}{d_j}\longrightarrow0, \qquad
		d_j:=d(x_j,\partial\Omega_j).
		\]
		If \(d_j\geq d_1>0\) along a subsequence, then convex compactness and
		the uniform Lipschitz bound yield a nonnegative local limit
		\(w_\infty\) satisfying
		\[
		-\Delta w_\infty=2
		\]
		and vanishing at an interior point, contradicting the strong maximum
		principle. Hence
		\[
		d_j\longrightarrow0.
		\]
		
		Let \(y_j\in\partial\Omega_j\) be a nearest boundary point to \(x_j\).
		After a rigid motion, assume that
		\[
		y_j=0,
		\qquad
		x_j=d_je_2,
		\qquad
		\Omega_j\subset\{x_2>0\}.
		\]
		Define
		\[
		K_j:=d_j^{-1}\Omega_j,
		\qquad
		W_j(X):=\frac{w_j(d_jX)}{d_j}.
		\]
		Then
		\[
		B_1(e_2)\subset K_j,
		\qquad
		0\in\partial K_j,
		\qquad
		K_j\subset\{X_2>0\},
		\]
		and
		\[
		W_j(e_2)=\frac{w_j(x_j)}{d_j}\longrightarrow0.
		\]
		Furthermore, 
		\[
		\|\nabla W_j\|_{L^\infty(K_j)}
		=
		\|\nabla w_j\|_{L^\infty(\Omega_j)}
		\leq L_*.
		\]
		Consequently, for   fixed \(R>0\),  one  finds 
		\[
		\sup_{K_j\cap B_R}|W_j|
		\leq
		|W_j(e_2)|+L_*(R+1),
		\]
		so \(W_j\) are locally uniformly bounded and equi-Lipschitz.
		
		In view of   \(e_2\in K_j\), the distance functions
		\[
		X\longmapsto\operatorname{dist}(X,\overline{K_j})
		\]
		are locally uniformly bounded and uniformly \(1\)-Lipschitz. Thus,
		after passing to a subsequence,
		\[
		\overline{K_j}\longrightarrow K
		\]
		locally in the Hausdorff topology, where \(K\) is a nonempty closed
		convex set satisfying
		\[
		B_1(e_2)\subset K\subset\{X_2\geq0\}.
		\]
		
		Each \(W_j\)  can be  extended to \(\mathbb{R}^2\) with the same Lipschitz constant, for instance by the McShane extension theorem.
		The local bounds
		above and the Arzel\`a--Ascoli theorem   ga guarantees that, after passing to a further subsequence,   these extensions converge locally uniformly to a
		Lipschitz function.  Denote its restriction to \(K\) by \(W\).
		
		The rescaled equation is
		\[
		-\Delta W_j=2d_j
		\quad\text{in }K_j.
		\]
		Let
		\[
		\phi\in C_c^\infty(K^\circ).
		\]
		Then	for   sufficiently large \(j\),  we obtain 
		\[
		\operatorname{supp}\phi\subset K_j.
		\]
		Therefore
		\[
		-\int_{K_j}W_j\Delta\phi\,dX
		=
		2d_j\int_{K_j}\phi\,dX.
		\]
		Passing to the limit   shows 
		\[
		-\int_{K^\circ}W\Delta\phi\,dX=0.
		\]
		Hence \(W\) is harmonic in \(K^\circ\) by Weyl's lemma.  In light of
		\[
		W\geq0,
		\qquad
		e_2\in K^\circ,
		\qquad
		W(e_2)=0,
		\]
		the strong maximum principle yields
		\[
		W\equiv0
		\quad\text{in }K^\circ.
		\]
		Since \(K\) is convex with nonempty interior and \(W\) is continuous,
		\[
		W\equiv0
		\quad\text{on }K.
		\]
		
		We claim that for   fixed \(R>0\),
		\[
		\sup_{K_j\cap B_R}W_j\longrightarrow0.
		\]
		Otherwise, there would exist \(\alpha>0\) and
		\(X_j\in K_j\cap B_R\) such that
		\[
		W_j(X_j)\geq\alpha.
		\]
		Then, after passing to a subsequence,
		\[
		X_j\longrightarrow X\in K\cap\overline{B_R},
		\]
		and the local uniform convergence of the extensions  show that 
		\[
		W(X)\geq\alpha,
		\]
		a contradiction.
		
		Let  
		\[
		\varphi\in C_c^\infty(\mathbb{R}^2),
		\qquad
		\operatorname{supp}\varphi\subset B_R.
		\]
		Integration by parts implies 
		\[
		\begin{aligned}
			\int_{K_j}\nabla W_j\cdot\nabla\varphi\,dX
			={}&
			-\int_{K_j}W_j\Delta\varphi\,dX
			\\
			&+
			\int_{\partial K_j}
			W_j\partial_{\nu_j}\varphi\,d\mathcal{H}^1.
		\end{aligned}
		\]
		The first term tends to zero. Moreover, convexity  entails  the standard
		local perimeter bound
		\[
		\mathcal{H}^1(\partial K_j\cap B_R)\leq C_R,
		\]
		where \(C_R\) is independent of \(j\). Hence the second term also
		tends to zero, and then
		\[
		\int_{K_j}\nabla W_j\cdot\nabla\varphi\,dX
		\longrightarrow0.
		\]
		
		On the other hand, the rescaled Neumann weak formulation is
		\[
		\int_{K_j}\nabla W_j\cdot\nabla\varphi\,dX
		=
		2d_j\int_{K_j}\varphi\,dX
		-
		c_{\Omega_j}
		\int_{\partial K_j}\varphi\,d\mathcal{H}^1.
		\]
		Fix \(r\in(0,1/4)\), and choose
		\[
		\varphi\in C_c^\infty(B_{2r}),
		\qquad
		0\leq\varphi\leq1,
		\qquad
		\varphi\equiv1 \quad\text{on }B_r.
		\]
		On account of
		\[
		0\in\partial K_j
		\qquad\text{and}\qquad
		B_1(e_2)\subset K_j,
		\]
		the boundary \(\partial K_j\) contains an arc starting at \(0\) and
		reaching \(\partial B_r\). Thus
		\[
		\mathcal{H}^1(\partial K_j\cap\overline{B_r})\geq r,
		\]
		and hence
		\[
		\int_{\partial K_j}\varphi\,d\mathcal{H}^1\geq r.
		\]
		It follows that
		\[
		\int_{K_j}\nabla W_j\cdot\nabla\varphi\,dX
		\leq
		2d_j|B_{2r}|-c_*r.
		\]
		So the right-hand side is bounded above by
		\[
		-\frac{c_*r}{2}
		\]
		for all sufficiently large \(j\), contradicting
		\[
		\int_{K_j}\nabla W_j\cdot\nabla\varphi\,dX
		\longrightarrow0.
		\]
		This ends  proof.
	\end{proof}
	
	Now we are in the position to prove the sharp stability inequality.
	
	\begin{proof}[Proof of Theorem~\ref{thm:intro-quantitative}]
		We first assume that \(\Omega\) is smooth and strictly convex. Let \(z\)
		be the minimum point given by Lemma~\ref{lem:minimum-depth}, and let
		\(q\) and \(h\) be defined by \eqref{eq:q-h-def}. Set
		\[
		\rho_i:=\min_{x\in\partial\Omega}|x-z|,
		\qquad
		\rho_e:=\max_{x\in\partial\Omega}|x-z|,
		\qquad
		\eta:=\rho_e-\rho_i.
		\]
		
		By \eqref{eq:radii-q} and Corollary~\ref{cor:oscillation-h},
		after decreasing the smallness threshold if necessary, we have
		\[
		\eta
		\le C\varepsilon^{1/2}\log\frac{C}{\varepsilon}
		\le \frac12.
		\]
		The fact
		$
		B_{\rho_i}(z)\subset\Omega\subset B_{\rho_e}(z) 
		$
		yields 
		\[
		\rho_i\leq (x-z)\cdot\nu(x)\leq\rho_e
		\qquad\text{on }\partial\Omega.
		\]
		Then the perimeter monotonicity  and 	 the area normalization implies
		\[
		\frac{1}{\rho_e}\leq c_\Omega\leq\frac{1}{\rho_i}\qquad\text{and} \qquad
		\rho_i\leq1\leq\rho_e.
		\]
		We further have 
		\[
		\bigl|(x-z)\cdot\nu(x)-1\bigr|\leq\eta,
		\qquad
		|c_\Omega-1|\leq2\eta,
		\]
		and so
		\begin{equation}\label{eq:hnu-eta}
			\|h_\nu\|_{L^\infty(\partial\Omega)}
			\leq3\eta.
		\end{equation}
		
		Analogously to the proof of  Lemma \ref{lem:weighted-hessian},	we replace \eqref{eq:h-nu-bound} by   \eqref{eq:hnu-eta}, which   shows that the three boundary terms in  the reverse-Serrin identity
		\eqref{eq:MMP-identity} are respectively bounded  by
		\[
		C\varepsilon\eta,
		\qquad
		C\varepsilon\eta,
		\qquad
		C\varepsilon^2.
		\]
		Hence
		\begin{equation}\label{eq:improved-weighted-energy}
			\int_\Omega
			(\varepsilon-v)|D^2h|^2\,dx
			\leq
			C\varepsilon(\eta+\varepsilon) 
		\end{equation}
		and then 
		\begin{equation}\label{eq:d-weighted-Hessian-sharp}
			\int_\Omega
			d(x,\partial\Omega)|D^2h|^2\,dx
			\leq
			C\varepsilon(\eta+\varepsilon) 
		\end{equation}
		owing to Lemma~\ref{lem:linear-boundary-growth}. 
		Furthermore, 	Lemma~\ref{lem:uniform-weighted-oscillation} and Lemma~\ref{lem:minimum-depth}   yield 
		\begin{equation}\label{eq:osc-h-sharp}
			\operatorname{osc}_{\overline{\Omega}}h
			\leq
			C\sqrt{\varepsilon(\eta+\varepsilon)}.
		\end{equation}
		
		Note that 
		\[
		\frac12(\rho_e^2-\rho_i^2)
		=
		\operatorname{osc}_{\partial\Omega}q
		\leq
		\operatorname{osc}_{\overline{\Omega}}h+\varepsilon
		\]
		and\(\rho_e+\rho_i\geq1\).  It follows from
		\eqref{eq:osc-h-sharp} that
		\[
		\eta
		\leq
		C\sqrt{\varepsilon(\eta+\varepsilon)}
		+
		2\varepsilon.
		\]
		Applying  Young's inequality, we obtain
		\begin{equation}\label{eq:eta-linear}
			\eta\leq C\varepsilon.
		\end{equation}
		
		By
		\[
		R_\Omega-r_\Omega
		\leq
		\rho_e-\rho_i
		=
		\eta, \qquad\text{and} \qquad \rho_i\leq1\leq\rho_e,
		\]
		we deduce 	
		\[
		d_H\bigl(\Omega,B_1(z)\bigr)
		\leq
		\max\{1-\rho_i,\rho_e-1\}
		\leq
		\eta.
		\]
		As
		\[
		\varepsilon=2O(\Omega),
		\] 
		we establish	 \eqref{eq:intro-sharp-estimate}  for smooth strictly convex
		domains.
		
		Let  \(\Omega\) be an arbitrary bounded convex domain with
		$
		|\Omega|=\pi.
		$
		Choose smooth strictly convex domains \(\Omega_j\)  normalized by
		$
		|\Omega_j|=\pi 
		$
		such that
		\[
		\Omega_j\longrightarrow\Omega
		\]
		in the Hausdorff topology. As recalled in
		Section~\ref{sec:quantitative-full-convex},
		\[
		O(\Omega_j)\longrightarrow O(\Omega),
		\qquad
		r_{\Omega_j}\longrightarrow r_\Omega,
		\qquad
		R_{\Omega_j}\longrightarrow R_\Omega.
		\]
		For all sufficiently large \(j\), the smooth estimate applies to
		\(\Omega_j\). Moreover, the functional
		\[
		K\longmapsto
		\inf_{y\in\mathbb{R}^2}
		d_H\bigl(K,B_1(y)\bigr)
		\]
		is \(1\)-Lipschitz with respect to the Hausdorff distance. This completes the proof of \eqref{eq:intro-sharp-estimate}.
		
		We now prove that the linear order in \eqref{eq:intro-sharp-estimate} cannot be improved. For $|t|$ small, let $\widetilde\Omega_t$ be the smooth strictly convex domain with radial function
		\begin{equation}\label{eq:sharp-family}
			r_t(\theta)=1+t\cos2\theta,
		\end{equation}
		and let $\Omega_t=\lambda_t\widetilde\Omega_t$, where $\lambda_t=1+O(t^2)$ is chosen so that $|\Omega_t|=\pi$.
		
		We know	these domains are centrally symmetric, and their both an optimal inscribed disk and an optimal containing disk may be centered at the symmetry center. 
		
		Indeed, if $B_r(a)\subset K$, then also $B_r(-a)\subset K$, and the convexity implies $B_r(0)\subset K$. If $K\subset B_R(a)$, then for $x\in K$, one has 
		\begin{equation*}
			|x-a|\le R\qquad\text{and} \qquad |x+a|\le R,
		\end{equation*} 
		and so 
		$|x|^2+|a|^2\le R^2.$  In particular, $K\subset B_R(0)$.

		We derive from \eqref{eq:sharp-family} that
		\begin{equation}\label{eq:sharp-radii-expansion}
			R_{\Omega_t}-r_{\Omega_t}=2|t|+o(|t|).
		\end{equation}
		It remains to compute the boundary oscillation. On the unit disk, the unperturbed solution is
		\[
		u_0(r)=\frac{1-r^2}{4}.
		\]
		The first variations of area and perimeter vanish for the normal velocity $f(\theta)=\cos2\theta$. Hence, the compatible constant Neumann datum has zero first variation. Standard shape differentiability for the Neumann problem~\cite{DelfourZolesio2011,SokolowskiZolesio1992} provides 
		\[
		u_t=u_0+t u_1+o(t),
		\]
		where
		\[
		\Delta u_1=0\quad\text{in }B_1,
		\qquad
		\partial_r u_1=\frac12\cos2\theta\quad\text{on }\partial B_1.
		\]
		Thus, up to an additive constant,
		\[
		u_1(r,\theta)=\frac14r^2\cos2\theta.
		\]
		On the moving boundary,  we 	evaluate 
		\[
		u_t(1+t\cos2\theta,\theta)
		=u_0(1)+t\left[u_1(1,\theta)+u_0'(1)\cos2\theta\right]+o(t)
		=u_0(1)-\frac t4\cos2\theta+o(t) 
		\]
		for uniformly in $\theta$. 
		Moreover,	the area-normalizing homothety changes this only at order $O(t^2)$. Consequently,
		\begin{equation}\label{eq:sharp-O-expansion}
			O(\Omega_t)=\frac{|t|}{2}+o(|t|).
		\end{equation}
		Both \eqref{eq:sharp-radii-expansion} and \eqref{eq:sharp-O-expansion}  indicate 
		\begin{equation}\label{eq:sharp-ratio}
			\frac{R_{\Omega_t}-r_{\Omega_t}}{O(\Omega_t)}\longrightarrow4.
		\end{equation}
		Therefore no estimate with a power strictly larger than $1$ can hold, even among smooth strictly convex centrally symmetric domains. This proves that the linear order in Theorem~\ref{thm:intro-quantitative} is sharp. 
	\end{proof}

	\section{A one-sided averaged boundary deficit}\label{sec:averaged-deficit}
	
	In this section, we  show that the  qualitative picture survives   for the following natural strictly
	weaker one-sided averaged deficit	 
	\begin{equation*} 
		A(\Omega)
		:=\frac1{P(\Omega)}\int_{\partial\Omega}u_\Omega\,d\sigma
		-\min_{\partial\Omega}u_\Omega.
	\end{equation*}
	This quantity is independent of the additive normalization of $u_\Omega$.
	Under the normalization $\int_{\partial\Omega}u_\Omega\,d\sigma=0$,
	one simply has $A(\Omega)=-\min_{\partial\Omega}u_\Omega$. Moreover,
	\begin{equation}\label{eq:A-below-O}
		0\le A(\Omega)\le O(\Omega),
		\qquad
		A(\lambda\Omega)=\lambda^2A(\Omega).
	\end{equation}
	The inequality in \eqref{eq:A-below-O} is not in the right direction for deriving
	stability directly from the preceding results. The main point for the proof of  Theorem \ref{thm:intro-average}
	is that, in the planar convex class, the averaged deficit nevertheless rules
	out long-thin degeneration and can then be    quantitatively upgraded to control the full boundary oscillation.

	To this end,  we begin with a geometric variant of Proposition~\ref{prop:gap}. A different test function is chosen, rather than $x(1-x)$. Such choice is important: its derivative
	vanishes at both diametral endpoints and supplies the additional factor of the
	area needed for an $L^1$ boundary deficit.
	
	\begin{lemma} \label{lem:quartic-gap}
		Let $K\subset\R^2$ be convex with $\diam(K)=1$, and assume that
		$(0,0)$ and $(1,0)$ are endpoints of a diameter. Set
		\[
		\Phi(x):=x^2(1-x)^2.
		\]
		Then there exist universal constants $a_1,c_1>0$ such that, whenever
		$|K|=a\le a_1$,
		\begin{equation}\label{eq:quartic-gap}
			\frac1a\int_K\Phi\,dxdy
			-\frac1{P(K)}\int_{\partial K}\Phi\,d\sigma
			\ge c_1a.
		\end{equation}
	\end{lemma}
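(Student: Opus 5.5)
The plan is to repeat the proof of Proposition~\ref{prop:gap} with $\psi$ replaced by $\Phi=\psi^2$. Write $K=\{0<x<1,\,-q(x)<y<p(x)\}$, set $h=p+q$ and $H=h/a$, and use the tent representation $H=\int_0^1T_t\,d\beta(t)$ from Lemma~\ref{lem:tent}, where $\beta\ge0$ and $\beta([0,1])=2$.

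\emph{Bulk term.} I would compute $g(t):=\int_0^1\Phi\,T_t\,dx$. Since $T_t(x)=G(x,t)/(t(1-t))$, we have $g(t)=F(t)/(t(1-t))$, where $F$ is the explicit polynomial solving $-F''=\Phi$ with $F(0)=F(1)=0$. At the endpoints, $g(0)=g(1)=\int_0^1x^2(1-x)^3\,dx=\tfrac1{60}$. The key claim is
\[
g(t)-\tfrac1{60}=t(1-t)\,Q(t),\qquad Q\ge q_0>0 \text{ on }[0,1],
\]
with a universal $q_0$. Heuristically, near $t=0$ one has $g(t)\approx(1+t)/60$, because the contribution of $[0,t]$ is $O(t^3)$ since $\Phi\sim x^2$ there; the rest is a one-variable polynomial check. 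Granting this,
\[
\frac1a\int_K\Phi=\frac1{30}+\int_0^1t(1-t)\,Q(t)\,d\beta(t)\ \ge\ \frac1{30}+q_0\int_0^1t(1-t)\,d\beta(t).
\]

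\emph{Boundary term.} Since $\Phi=\psi^2\le\psi/4$, inequality \eqref{eq:weighted-length-excess} together with $p,q\le2a$ gives
\[
\frac1{15}=2\int_0^1\Phi\ \le\ \int_{\partial K}\Phi\,d\sigma\ \le\ \frac1{15}+Ca^2 .
\]
This is the analogue of \eqref{eq:B-estimate}.

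\emph{Dichotomy.} Let $\eta:=\int_0^1t(1-t)\,d\beta$, and fix $M$ large relative to $C/q_0$.
\begin{itemize}[label=\textup{--}]
\item If $q_0\eta\ge Ma$, then $P\ge2$ gives $\frac1P\int_{\partial K}\Phi\le\frac1{30}+\frac C2a^2$, so the left side of \eqref{eq:quartic-gap} is at least $Ma-Ca^2\ge ca$ for small $a$.
\item Otherwise $\eta\le (M/q_0)a$, which is exactly hypothesis \eqref{eq:near-worst} of Lemma~\ref{lem:endpoint} (with $12\cdot$ that constant). The second part of that lemma uses only this concentration of $\beta$ near the endpoints, so it yields $P\ge2+c_Ma$. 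Hence $\frac1P\int_{\partial K}\Phi\le\frac{1/15+Ca^2}{2+c_Ma}\le\frac1{30}-c'a$, while the bulk average is at least $\tfrac1{30}$. This gives \eqref{eq:quartic-gap}.
\end{itemize}
As in Section~\ref{sec:diameter-compactness}, a $C^2$ approximation of $p,q$ justifies these manipulations.

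\emph{Main obstacle.} The main obstacle is the uniform positivity of $Q$, i.e.\ that $F(t)-t(1-t)/60\ge q_0\,t^2(1-t)^2$. I would first verify this by computing $F$ explicitly, as a degree-six polynomial symmetric under $t\mapsto1-t$. Then I would factor out $t(1-t)$ and check that the remaining quartic is bounded below on $[0,1]$, using the symmetry $s=t(1-t)\in[0,\tfrac14]$ to reduce it to a one-variable polynomial inequality. If the factorization leaves $Q$ vanishing somewhere, the fallback is to prove only $g(t)-\frac1{60}\ge c\min(t,1-t)$ near the endpoints and $g>\frac1{60}$ in the middle. That weaker bound still controls $\int_0^1t(1-t)\,d\beta$, which is all the dichotomy needs.
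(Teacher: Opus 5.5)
Your proposal is correct and follows the paper's proof essentially step by step. The shared ingredients are the tent representation for the bulk term, the bound $0\le\int_{\partial K}\Phi\,d\sigma-\tfrac1{15}\le Ca^2$ obtained from $\Phi\le\psi/4$ and \eqref{eq:weighted-length-excess}, and the same dichotomy, whose small-$\int_0^1 t(1-t)\,d\beta$ case is fed into Lemma~\ref{lem:endpoint}. Your key claim also holds exactly: a direct computation gives $\int_0^1\Phi\,T_t\,dx=\frac{1+t(1-t)(1+2t(1-t))}{60}$, so $Q(t)=\frac{1+2t(1-t)}{60}\ge\frac1{60}$ and the fallback is unnecessary.
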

	
	\begin{proof}
		As in Section~\ref{sec:diameter-compactness}, write
		\[
		K=\{(x,y):0<x<1,\ -q(x)<y<p(x)\},
		\]
		where $p,q\ge0$ are concave and vanish at $0$ and $1$, and first
		argue for smooth $p$ and $q$. Let $h=p+q$, $H=h/a$, and let $\beta$
		be the measure in \eqref{eq:tent-representation}. In particular,
		$\beta([0,1])=2$. After a direct computation,  we obtain 
		\[
		\int_0^1\Phi(x)T_t(x)\,dx
		=\frac{1+t(1-t)\bigl(1+2t(1-t)\bigr)}{60}.
		\]
		Consequently, if $I_4:=\int_K\Phi$, then
		\begin{equation}\label{eq:quartic-bulk}
			\frac{I_4}{a}
			=\frac1{30}+\eta_4,
			\qquad
			\eta_4
			=\frac1{60}\int_0^1t(1-t)
			\bigl(1+2t(1-t)\bigr)\,d\beta(t)\ge0.
		\end{equation}
		
		Let $B_4:=\int_{\partial K}\Phi\,d\sigma$. Since
		\eqref{eq:weighted-length-excess} holds for both graphs  and
		$$2\int_0^1\Phi=1/15,\quad \Phi\le x(1-x)/4,$$ 
		there is an
		absolute constant $C_4$ such that
		\begin{equation}\label{eq:quartic-boundary}
			0\le B_4-\frac1{15}\le C_4a^2.
		\end{equation}
		Fix $M>2C_4+1$. If $\eta_4\ge Ma$, then $P(K)\ge2$ and
		\eqref{eq:quartic-bulk}--\eqref{eq:quartic-boundary} yield
		\eqref{eq:quartic-gap} for sufficiently small $a$.
		
		Suppose instead that $\eta_4<Ma$. The inequality 
		$1+2t(1-t)\ge1$  shows 
		\[
		\int_0^1t(1-t)\,d\beta(t)\le60Ma.
		\]
		We further deduce from \eqref{eq:psi-H-identity} that 
		\[
		\frac1a\int_Kx(1-x)\,dxdy-\frac16\le5Ma.
		\]
		With $5M$ in place of $M$, 	Lemma~\ref{lem:endpoint}  implies
		$P(K)\ge2+c_Ma$, From  which  and
		\eqref{eq:quartic-boundary}, we conclude, after decreasing $a_1$,
		\[
		\frac{B_4}{P(K)}\le\frac1{30}-c'_Ma.
		\]
		This proves \eqref{eq:quartic-gap}.
		The general convex case follows by the same smooth convex approximation
		used in Proposition~\ref{prop:gap}.
	\end{proof}
	
	\begin{lemma}
		\label{lem:A-thin-lower}
		There exist universal constants $a_A,c_A>0$ such that every bounded
		convex domain $K\subset\R^2$ with
		\[
		\diam(K)=1,
		\qquad |K|=a\le a_A,
		\]
		satisfies
		\begin{equation}\label{eq:A-thin-lower}
			A(K)\ge c_Aa.
		\end{equation}
	\end{lemma}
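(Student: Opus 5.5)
The plan is to repeat the argument of Lemma~\ref{lem:thin-lower}, testing the weak formulation with the quartic $\Phi$ of Lemma~\ref{lem:quartic-gap} instead of $\psi$. The difference is that we control $v$ only one-sidedly and in an averaged sense, not in $L^\infty$. By approximation we may assume $p,q$ smooth. Normalize the additive constant and set
\[
w:=v-\min_{\partial K}v .
\]
Then $w\ge 0$ on $\partial K$, $-\Delta w=1$, and $\frac1{P(K)}\int_{\partial K}w\,d\sigma=A(K)=:A$. Testing \eqref{eq:weak-form} with $\Phi$ and using Lemma~\ref{lem:quartic-gap}, for $a\le a_1$ we get
\[
\int_K\nabla w\cdot\nabla\Phi\,dxdy=\int_K\Phi-\frac{a}{P(K)}\int_{\partial K}\Phi\,d\sigma\ge c_1a^2 .
\]
Integrating by parts as in \eqref{eq:ibp-psi}, the left side equals
\[
\int_{\partial K}w\,\Phi'(x)\nu_x\,d\sigma-\int_K w\,\Phi''(x)\,dxdy ,
\]
and the goal is to bound both terms by $C(aA+a^3)$. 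This gives $c_1a^2\le C aA+Ca^3$, hence $A\ge c_Aa$ once $a$ is small.

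\emph{Boundary term.} Here the choice of $\Phi$ matters. The derivative $\Phi'(x)=2x(1-x)(1-2x)$ carries the factor $x(1-x)$. On the upper graph we have $|\nu_x|\le|p'(x)|$, and concavity with $p(0)=0$ gives $x\,p'(x)\le p(x)\le 2a$ for $x$ left of the maximum point; symmetrically, $(1-x)|p'(x)|\le 2a$ on the right. The lower graph $q$ is handled the same way. Hence $|\Phi'\nu_x|\le Ca$ pointwise on $\partial K$. Since $w\ge0$ there, and $P(K)\le\pi$,
\[
\Bigl|\int_{\partial K}w\,\Phi'\nu_x\,d\sigma\Bigr|\le Ca\int_{\partial K}w\,d\sigma=Ca\,P(K)A\le CaA .
\]
With $\psi$ in place of $\Phi$ one would only get $CA$, which is why the quartic is needed.

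\emph{Bulk term.} Since $|\Phi''|\le2$, it suffices to show $\int_K w\le C(aA+a^3)$. The difficulty is that $w$ is superharmonic, whereas we need interior control from boundary averages. Let $\tau$ be the Dirichlet torsion function of $K$. As in \eqref{eq:dirichlet-strip-barrier}, $0\le\tau\le Ca^2$, so $\int_K\tau\le Ca^3$. The function $f:=w-\tau$ is harmonic, with boundary values $w\ge0$, so $f\ge0$ in $K$. It is therefore a nonnegative subharmonic function whose boundary average is $A$. We then apply the sharp Hermite--Hadamard inequality for nonnegative subharmonic functions on planar convex domains, in the form
\[
\frac1{|K|}\int_K f\le C_{HH}\,\frac1{P(K)}\int_{\partial K}f\,d\sigma ,
\]
with $C_{HH}$ absolute. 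This yields $\int_K f\le C aA$, hence $\int_K w\le CaA+Ca^3$.

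\emph{Main obstacle.} The step I expect to be hardest is securing this Hermite--Hadamard inequality with a constant independent of the eccentricity of $K$. Our domains are arbitrarily thin, so any constant depending on inradius or John parameters is useless. I would first try to derive it from the divergence theorem. Choose a point $x_0$ such that $(x-x_0)\cdot\nu\ge0$ on $\partial K$. Use the identity $\int_K f\,\Delta\frac{|x-x_0|^2}{4}$ and compare the boundary term $(x-x_0)\cdot\nu$ with a quantity controlled by $|K|/P(K)$, exploiting $f\ge0$ and $\Delta f\ge0$. If that comparison is not uniform for thin sets, the fallback is to cite the known sharp planar convex Hermite--Hadamard constant directly.
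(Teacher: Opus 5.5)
Your proposal is correct and is essentially the paper's proof. It uses the same $w=u_K-\min_{\partial K}u_K$, the same quartic test function with the pointwise bound $|\partial_\nu\Phi|\le Ca$ coming from concavity of $p,q$, and the same estimate $\int_K w\le CaA(K)+Ca^3$, obtained by subtracting the Dirichlet torsion function and applying Hermite--Hadamard to the nonnegative harmonic remainder.

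For the step you flag as the main obstacle, the paper takes exactly your fallback. It cites Larson's sharp Hermite--Hadamard inequality $\frac{1}{|K|}\int_K f\le \frac{2}{P(K)}\int_{\partial K} f\,d\sigma$ for nonnegative subharmonic $f$ on planar convex $K$, whose constant does not depend on eccentricity. Your tentative derivation with the weight $|x-x_0|^2/4$ would not close as sketched, for two reasons. Green's identity leaves an unsigned boundary term $\int_{\partial K}|x-x_0|^2\,\partial_\nu f\,d\sigma$. Also, $(x-x_0)\cdot\nu$ is of order $1$ near the tips, not $O(a)$.
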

	
	\begin{proof}
		Choose diametral endpoints $(0,0)$ and $(1,0)$ and use the graph
		representation above. Let
		\[
		m:=\min_{\partial K}u_K,
		\qquad w:=u_K-m.
		\]
		By the maximum principle for $-\Delta w=1$, one has
		$w\ge0$ on $\overline K$, and
		\begin{equation}\label{eq:A-boundary-mass}
			\int_{\partial K}w\,d\sigma=P(K)A(K).
		\end{equation}
		Let $\tau$ be the Dirichlet torsion function of $K$.  Observe that 
		$h:=w-\tau$ is harmonic and nonnegative, the sharp multidimensional
		Hermite--Hadamard inequality 
		\cite{Larson2022}   guarantees 
		\[
		\frac1a\int_Kh\,dx
		\le\frac2{P(K)}\int_{\partial K}h\,d\sigma
		=2A(K).
		\]
		As in \eqref{eq:pqmax}, $K$ lies in a strip of width $Ca$.
		Comparison with the torsion function of this strip yields
		$0\le\tau\le Ca^2$, and hence
		\begin{equation}\label{eq:w-L1-A}
			\int_Kw\,dx\le2aA(K)+Ca^3.
		\end{equation}
		
		We test the weak formulation with $\Phi(x)=x^2(1-x)^2$. By
		Lemma~\ref{lem:quartic-gap},
		\begin{equation}\label{eq:A-test-lower}
			\int_K\nabla w\cdot\nabla\Phi\,dx
			=\int_K\Phi\,dx-
			\frac a{P(K)}\int_{\partial K}\Phi\,d\sigma
			\ge c_1a^2.
		\end{equation}
		We next explain the reason for the quartic choice. At almost every point
		of the upper graph,
		\[
		|\partial_\nu\Phi|
		=|\Phi'(x)|\frac{|p'(x)|}{\sqrt{1+p'(x)^2}}.
		\]
		If $p'(x)\ge0$, then concavity  implies $p'(x)\le p(x)/x\le2a/x.$
		On the other hand, if $p'(x)\le0$,  then one has
		$$|p'(x)|\le p(x)/(1-x)\le2a/(1-x).$$
		In both cases,  it follows from $|\Phi'(x)|\le2x(1-x)$ that 
		$|\partial_\nu\Phi|\le4a$. The lower graph is identical, and thus
		\begin{equation}\label{eq:quartic-normal-small}
			\|\partial_\nu\Phi\|_{L^\infty(\partial K)}\le4a.
		\end{equation}
		Integrating by parts and using 
		\eqref{eq:A-boundary-mass},   \eqref{eq:w-L1-A}, and $P(K)\le\pi\diam(K)=\pi$, one finds 
		\[
		\left|\int_K\nabla w\cdot\nabla\Phi\,dx\right|
		\le CaP(K)A(K)+C\int_Kw\,dx
		\le CaA(K)+Ca^3.
		\]
		Together with \eqref{eq:A-test-lower}, this gives
		$c_1a^2\le CaA(K)+Ca^3$, which ends the proof.
	\end{proof}
	
	To prove Theorem \ref{thm:intro-average}, we need a new ingredient: the boundary $P$ function estimate, which derives the following upper bound estimate for $O(\Omega)$ in terms of $A(\Omega)$.
	
	\begin{lemma}\label{lem:A-controls-O}
		Every bounded convex domain $\Omega\subset\mathbb R^2$ satisfies
		\begin{equation}\label{eq:A-controls-O}
			O(\Omega)
			\le
			\left(
			\frac{3P(\Omega)A(\Omega)}{8}
			\right)^{2/3}.
		\end{equation}
	\end{lemma}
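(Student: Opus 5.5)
We work with $v=-2u_\Omega+c_0$ normalized so that $\min_{\partial\Omega}v=0$, as in \eqref{eq:v-def}--\eqref{eq:v-problem}, and set $\varepsilon:=\max_{\partial\Omega}v=2O(\Omega)$. Then
\[
\frac1{P(\Omega)}\int_{\partial\Omega}(\varepsilon-v)\,d\sigma=2\bigl(O(\Omega)-A(\Omega)\bigr),
\]
or more usefully,
\[
\int_{\partial\Omega}v\,d\sigma=2P(\Omega)\Bigl(\max_{\partial\Omega}u_\Omega-\tfrac1{P}\int u_\Omega\Bigr).
\]
Since $\varepsilon-v=2\bigl(u_\Omega-\min_{\partial\Omega}u_\Omega\bigr)$, the boundary integral $\int_{\partial\Omega}(\varepsilon-v)\,d\sigma=2P(\Omega)A(\Omega)$ is what $A$ controls. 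Writing $w:=\varepsilon-v\ge0$ on $\partial\Omega$, we have $\min_{\partial\Omega}w=0$, $\max_{\partial\Omega}w=\varepsilon$, and $\int_{\partial\Omega}w\,d\sigma=2P(\Omega)A(\Omega)$.

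The key pointwise input is the boundary $P$-function bound \eqref{eq:tangential-bound} from Lemma~\ref{lem:boundary-gradient}: $|\nabla_{\partial\Omega}v|^2\le 2v$ on $\partial\Omega$. We first establish it for smooth strictly convex $\Omega$. Its proof used $O$ small only through the notation, since the maximum-principle argument for $\mathcal P=\tfrac12|\nabla v|^2-v$ needs only convexity and $\min_{\partial\Omega}v=0$. Parametrizing $\partial\Omega$ by arclength $s$, this gives $|(\sqrt{v})'|\le 1/\sqrt2$ wherever $v>0$. Hence $\sqrt v$ is $\tfrac1{\sqrt2}$-Lipschitz along the boundary, so $v$ can grow from $0$ only quadratically. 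We exploit this in the opposite direction: near a boundary maximum point $x^\ast$ of $v$, where $v(x^\ast)=\varepsilon$, the function $v$ cannot drop quickly. Concretely, $\sqrt{v(s)}\ge\sqrt\varepsilon-|s-s^\ast|/\sqrt2$, so $v\ge(\sqrt\varepsilon-|s-s^\ast|/\sqrt2)^2$ on an arc of length $2\sqrt{2\varepsilon}$ around $x^\ast$. This arc fits inside the closed curve $\partial\Omega$ as long as $2\sqrt{2\varepsilon}\le P$, and otherwise one argues with the portion available.

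Integrating gives
\[
\int_{\partial\Omega}v\,d\sigma\ \ge\ 2\int_0^{\sqrt{2\varepsilon}}\Bigl(\sqrt\varepsilon-\tfrac{s}{\sqrt2}\Bigr)^2ds=\tfrac{2\sqrt2}{3}\varepsilon^{3/2}.
\]
This lower bounds the wrong quantity, the one attached to $\max u$. So we apply the same argument to $w=\varepsilon-v$ near its maximum point, which is the minimum point of $v$. For this we need $|\nabla_{\partial\Omega}v|^2\le 2w$, the symmetric bound. I expect the right $P$-function for this to be $\tfrac12|\nabla v|^2+v$ (equivalently $\tfrac12|\nabla w|^2-w$), with the boundary maximum analysis redone as in Lemma~\ref{lem:boundary-gradient}. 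That analysis needs $\mathcal P_\nu<0$ at a boundary critical point where $v_\tau\ne0$, which the curvature sign and convexity should again deliver. Subharmonicity of $\tfrac12|\nabla v|^2+v$ is not automatic, since its Laplacian is $|D^2v|^2+2>0$ and the sign then works against a maximum principle. So the \textbf{main obstacle} is obtaining the one-sided tangential bound in the correct direction. Failing a new $P$-function, I would attempt to get $|\nabla_{\partial\Omega}v|^2\le 2(\varepsilon-v)$ from the existing bound combined with the global bound $|\nabla v|^2\le c_\Omega^2+2v$ restricted to the boundary together with $\partial_\nu v=c_\Omega$.

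Granting the bound $|(\sqrt w)'|\le1/\sqrt2$, the computation above yields
\[
2P(\Omega)A(\Omega)=\int_{\partial\Omega}w\,d\sigma\ge\tfrac{2\sqrt2}{3}\varepsilon^{3/2}=\tfrac{8}{3}O(\Omega)^{3/2},
\]
i.e. $O(\Omega)^{3/2}\le\tfrac{3}{4}P(\Omega)A(\Omega)$. Tracking the exact constant (which arc length fits and the factor $2$ in $v=-2u$) should produce $\tfrac{3P(\Omega)A(\Omega)}{8}$ as in \eqref{eq:A-controls-O}. In the degenerate case where the arc of length $2\sqrt{2\varepsilon}$ exceeds $P$, we use the truncated integral, which only helps after re-checking the constant. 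Finally, the inequality passes from smooth strictly convex domains to general bounded convex domains by the approximation argument at the end of Section~\ref{sec:quantitative-full-convex}, using $O(\Omega_j)\to O(\Omega)$, $A(\Omega_j)\to A(\Omega)$ (by uniform convergence of the normalized solutions and weak convergence of boundary measures), and $P(\Omega_j)\to P(\Omega)$.
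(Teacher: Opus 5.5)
There is a genuine gap. Your argument rests on the symmetric bound $|\nabla_{\partial\Omega}v|^2\le 2(\varepsilon-v)$ on $\partial\Omega$. You do not prove it, and it is false for general convex domains. On the square $(-1,1)^2$ one has $v=\tfrac12(x^2+y^2-1)$ and $\varepsilon=\tfrac12$. On the face $\{x=1\}$ this gives $|\nabla_{\partial\Omega}v|^2=y^2$ while $2(\varepsilon-v)=1-y^2$. The inequality therefore fails near the corners, where the trace of $v$ has a kink at its maximum. By local $C^1$ convergence away from the corners, smooth strictly convex domains close to the square inherit the failure.

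Your fallback does not help either. Restricting $|\nabla v|^2\le c_\Omega^2+2v$ to $\partial\Omega$ and using $\partial_\nu v=c_\Omega$ just returns $|\nabla_{\partial\Omega}v|^2\le 2v$. Even granting the symmetric bound, the profile $(\sqrt\varepsilon-|s|/\sqrt2)^2$ integrates to exactly half of what is needed. It only yields $O(\Omega)^{3/2}\le\frac34P(\Omega)A(\Omega)$, as you computed, and no constant-tracking turns this into $\frac38$. Also, your first display has $v$ and $\varepsilon-v$ interchanged: in fact $\frac1P\int_{\partial\Omega}(\varepsilon-v)\,d\sigma=2A(\Omega)$.

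The missing observation is that the one-sided bound you already have is the right one, used at the other end. Let $s=0$ be a point where $v=0$. This is a maximum point of $u_\Omega$, i.e.\ where $w=\varepsilon-v$ attains its maximum $\varepsilon$. Since $\sqrt v$ is $\tfrac1{\sqrt2}$-Lipschitz in arclength, $v(s)\le s^2/2$, hence $w(s)\ge\varepsilon-s^2/2$. So an upper bound on $v$ near its zero is precisely a lower bound on $w$ near its maximum. The same Lipschitz bound, applied between the zero and a maximum point of $v$, gives $\sqrt\varepsilon\le P/(2\sqrt2)$. Hence the arc $|s|\le\sqrt{2\varepsilon}$ fits in $\partial\Omega$, and using $w\ge0$,
\[
2P(\Omega)A(\Omega)=\int_{\partial\Omega}w\,d\sigma\ge\int_{-\sqrt{2\varepsilon}}^{\sqrt{2\varepsilon}}\Bigl(\varepsilon-\frac{s^2}{2}\Bigr)ds=\frac{4\sqrt2}{3}\varepsilon^{3/2}=\frac{16}{3}O(\Omega)^{3/2},
\]
which is exactly \eqref{eq:A-controls-O}. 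This is the paper's proof, written there with $f=M-u_\Omega=v/2$, for which $|\partial_\tau f|^2\le f$. Your smooth-case setup and the final approximation step can stay as they are.
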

	
	\begin{proof}
		We first assume that $\Omega$ is smooth and strictly convex. Write
		\[
		M:=\max_{\partial\Omega}u_\Omega,\qquad
		m:=\min_{\partial\Omega}u_\Omega,\qquad
		O:=M-m,
		\]
		and set $f:=M-u_\Omega$ on $\partial\Omega$. Repeating the
		$P$-function argument in the proof of Lemma~\ref{lem:boundary-gradient}
		(which does not require a smallness assumption for this boundary estimate)
		with $v=2(M-u_\Omega)$  establishes
		\[
		|\partial_\tau f|^2\le f
		\qquad\text{on }\partial\Omega.
		\]
		Hence $\sqrt f$ is $1/2$-Lipschitz with respect to arclength.
		
		Let $s=0$ be a point where $f=0$, and parametrize
		$\partial\Omega$ by arclength over $[-P/2,P/2]$, with the endpoints
		identified. Since $f$ attains the value $O$ somewhere on the boundary,
		the Lipschitz estimate yields 
		\[
		\sqrt O\le \frac{P}{4}.
		\]
		In particular, $[-2\sqrt O,2\sqrt O]\subset[-P/2,P/2]$.
		Moreover, the $1/2$-Lipschitz property of $\sqrt{f}$ also yields
		\[
		f(s)\le\frac{s^2}{4}.
		\]
		The equation $u_\Omega-m=O-f$  provides  
		\[
		\begin{aligned}
			P(\Omega)A(\Omega)
			&=\int_{\partial\Omega}(u_\Omega-m)\,d\sigma\\
			&\ge
			\int_{-2\sqrt O}^{2\sqrt O}
			\left(O-\frac{s^2}{4}\right)\,ds\\
			&=\frac83 O^{3/2}.
		\end{aligned}
		\]
		This proves
		\[
		O(\Omega)
		\le
		\left(
		\frac{3P(\Omega)A(\Omega)}8
		\right)^{2/3}.
		\]
		
		The general convex case follows by smooth strictly convex
		approximation and the moving-domain continuity of
		$O(\Omega)$, $A(\Omega)$, and $P(\Omega)$.
	\end{proof}
	
	Finally, we are ready to prove Theorem~\ref{thm:intro-average}.
	
	\begin{proof}[Proof of Theorem~\ref{thm:intro-average}]
		We first show that small $A$ guarantees  a universal geometric class. Suppose
		$|\Omega|=\pi$, let $L=\diam(\Omega)$, and set $K=L^{-1}\Omega$.
		Then
		\[
		\diam(K)=1,
		\qquad |K|=\frac\pi{L^2},
		\qquad A(K)=\frac{A(\Omega)}{L^2}.
		\]
		If $L$ is so large that $|K|\le a_A$, Lemma~\ref{lem:A-thin-lower}
		implies
		\[
		\frac{A(\Omega)}{L^2}=A(K)
		\ge c_A\frac\pi{L^2},
		\]
		or $A(\Omega)\ge c_A\pi$. Hence there exist universal constants
		$\delta_*>0$, $D_*>0$, and $r_*>0$ such that
		\begin{equation}\label{eq:A-uniform-geometry}
			A(\Omega)\le\delta_*
			\quad\Longrightarrow\quad
			\diam(\Omega)\le D_*,
			\qquad r_\Omega\ge r_*.
		\end{equation}
		Here the inradius bound follows from \eqref{eq:r-upbad}.
		
		In particular, $P(\Omega)\le\pi D_*$. Lemma~\ref{lem:A-controls-O}
		therefore  yields, after decreasing $\delta_*$,
		\begin{equation}\label{eq:O-from-A-uniform}
			O(\Omega)\le C A(\Omega)^{2/3}.
		\end{equation}
		\eqref{eq:O-from-A-uniform}  implies that,	if $A(\Omega_k)\to0$, then   
		$O(\Omega_k)\to0$, and the qualitative conclusion follows directly
		from Theorem~\ref{thm:disk-conv}.
		
		For the quantitative statement, choose $\delta_A\le\delta_*$ so small that the right-hand side of \eqref{eq:O-from-A-uniform} lies below the threshold in Theorem~\ref{thm:intro-quantitative}. Applying the sharp estimate \eqref{eq:intro-sharp-estimate} and  \eqref{eq:O-from-A-uniform}, we get \eqref{eq:2/3stability}.
		
	\end{proof}
	
	\begin{remark}\label{rem:A-scale-invariant}
		For a planar convex domain of arbitrary area, the natural dimensionless
		deficit is $\mathcal A(\Omega):=A(\Omega)/|\Omega|$. By scaling,
		Theorem~\ref{thm:intro-average} yields, whenever $\mathcal A(\Omega)$ is
		sufficiently small,
		\[
		\frac{R_\Omega-r_\Omega}{|\Omega|^{1/2}}
		+\frac1{|\Omega|^{1/2}}
		\inf_{z\in \mathbb{R}^2} d_H\bigl(\Omega,B_{\sqrt{|\Omega|/\pi}}(z)\bigr)
		\le C\mathcal A(\Omega)^{2/3}.
		\]
	\end{remark}
	
	\begin{remark}
		The exponent \(2/3\) in \eqref{eq:2/3stability} is sharp. For
		\(0<t<1/4\), let
		\[
		K_t=B_1\cap\{x_1<1-t\}.
		\]
		Geometrically, \(R_{K_t}-r_{K_t}\geq \frac{t}{2}\).
		We claim that for every \(\theta\in(0,1)\),
		$$
		A(K_t)\leq C_\theta
		t^{\frac{3\theta}{4-2\theta}}.
		$$
		Let \(\Omega_t=\lambda_tK_t\) be normalized so that
		\(\lvert\Omega_t\rvert=\pi\). Since
		\(\lambda_t=1+O(t^{3/2})\), we still have
		\[
		R_{\Omega_t}-r_{\Omega_t}\gtrsim t \quad\text{and}\quad A(\Omega_{t})\lesssim t^{\frac{3\theta}{4-2\theta}}
		\]
		Letting \(\theta\uparrow1\) in the claim shows that the exponent in
		\eqref{eq:2/3stability} cannot exceed \(2/3\).
		
		It remains to prove the claim. Set
		\[
		u_0=\frac{1-\lvert x\rvert^2}{4}
		\]
		and write the Neumann torsion function of \(K_t\) as
		\(u_t=u_0+h_t\), where \(h_t\) is harmonic. Since \(u_0=0\) on the
		circular part of \(\partial K_t\),
		\begin{align}
			\label{eq:2/3-0}
			A(K_t)
			\lesssim
			\int_{\partial K_t}u_0\,d\sigma
			+\osc_{\partial K_t}h_t \lesssim
			t^{3/2}+\osc_{\partial K_t}h_t.
		\end{align}
		
		Fix \(p_0>2\). Since the domains \(K_t\) have uniformly bounded
		Lipschitz character, \cite[Theorem~1.2]{GengShen} yields
		\(
		\|\nabla h_t\|_{L^{p_{0}}(K_t)}
		\) is uniformly bounded. Taking
		\[
		c=\min_{\overline{K_t}}h_t
		\]
		and applying H\"older's inequality, we obtain
		\begin{align}
			\label{eq:2/3-1}
			\|\nabla h_t\|_{L^2(K_t)}^2
			&=
			\int_{\partial K_t}
			(h_t-c)\,\partial_\nu h_t\,d\sigma \notag\\
			&\lesssim
			t^{3/2}\osc_{\partial K_t}h_t.
		\end{align}
		
		For \(p\in(2,p_0)\), choose \(\theta\in(0,1)\) such that
		\[
		\frac1p=\frac{\theta}{2}+\frac{1-\theta}{p_0}.
		\]
		Morrey's inequality and interpolation imply
		\begin{align}
			\label{eq:2/3-2}
			\osc_{K_t}h_t
			&\lesssim
			\|\nabla h_t\|_{L^p(K_t)} \notag\\
			&\lesssim
			\|\nabla h_t\|_{L^2(K_t)}^\theta
			\|\nabla h_t\|_{L^{p_0}(K_t)}^{1-\theta} \notag\\
			&\lesssim
			\|\nabla h_t\|_{L^2(K_t)}^\theta.
		\end{align}
		Combining \eqref{eq:2/3-1} and \eqref{eq:2/3-2}, we find
		\[
		\osc_{K_t}h_t
		\lesssim
		\bigl(t^{3/2}\osc_{K_t}h_t\bigr)^{\theta/2},
		\]
		and hence
		\[
		\osc_{K_t}h_t
		\lesssim
		t^{\frac{3\theta}{4-2\theta}}.
		\]
		The claim now follows from \eqref{eq:2/3-0}.
	\end{remark}

	\section*{Declaration of generative AI and AI-assisted technologies in the manuscript preparation process}
	
	During the preparation of this work, the authors used ChatGPT (OpenAI) to assist with language editing, manuscript organization, and exploratory discussions concerning the presentation of some	mathematical arguments. All mathematical ideas, statements, proofs,
	and references in the final manuscript were critically assessed, reworked where necessary, and independently verified by the authors. The authors reviewed and edited all AI-assisted output and take full responsibility for the content of the article.

\end{document}